\documentclass[reqno,11pt]{amsart}
\usepackage{amsmath,latexsym,amsfonts,amsbsy,amsthm,mathtools}
\usepackage{amssymb}
\usepackage{mathrsfs}
\usepackage{shortvrb}
\usepackage{enumerate}
\usepackage{color}
\usepackage{graphicx}

\DeclareMathOperator{\supp}{supp}
\def\endpf{\hfill\ensuremath{\square}\par}
\usepackage[hidelinks]{hyperref}
\hypersetup{
	pdftitle={Rough Singular Integrals and Marcinkiewicz Integrals on Product Spaces, with H1 Characterizations on Product Spheres},
	pdfauthor={Jiecheng Chen, Dashan Fan, Weitao Hu, and Meng Wang},
	pdfkeywords={product spaces, rough kernels, singular integral operators, Marcinkiewicz integrals, Hardy spaces}
}

\DeclareMathOperator*{\Liint}{{\mkern1mu\int\mkern-8mu\int\mkern-8mu}}

\def\ba{\begin{eqnarray}}
	\def\ea{\end{eqnarray}}

\def\R{\mathbb R}

\def\tv10{\widetilde{v_1^0}}
\def\tb10{\widetilde{b_1^0}}

\newcommand{\beq}{\begin{equation}}
	\newcommand{\eeq}{\end{equation}}
\newcommand{\ben}{\begin{eqnarray}}
	\newcommand{\een}{\end{eqnarray}}
\newcommand{\beno}{\begin{eqnarray*}}
	\newcommand{\eeno}{\end{eqnarray*}}

\newtheorem{Theorem}{Theorem}[section]

\newtheorem{Proposition}[Theorem]{Proposition}
\newtheorem{Lemma}[Theorem]{Lemma}
\newtheorem{Corollary}[Theorem]{Corollary}
\newtheorem{Remark}[Theorem]{Remark}

\numberwithin{equation}{section}

\begin{document}

	\title[Rough Integrals on Product Spaces]{%
		Rough Singular Integrals and Marcinkiewicz Integrals on Product Spaces,
		with $H^1$ Characterizations on Product Spheres
	}

	\author{Jiecheng Chen}
	\address{School of Mathematical Sciences,  Zhejiang Normal University, Jinhua, 321004, China}
	\email{jcchen@zjnu.edu.cn}

\author{Dashan Fan}
\address{School of Mathematical Sciences, Zhejiang Normal University,
	Jinhua 321004, China; Department of Mathematical Sciences,
	University of Wisconsin--Milwaukee, Milwaukee, WI 53211, USA}
\email{fan@uwm.edu}

	\author{Weitao Hu}
	\address{School of Mathematical Sciences,  Zhejiang Normal University, Jinhua, 321004, China}
	\email{huweitao@zjnu.edu.cn}

	\author{Meng Wang
	}
	\address{School of Mathematical Sciences, Zhejiang University, Hangzhou 310058, China}
	\email{mathdreamcn@zju.edu.cn}

	\date{September 26, 2026}

\begin{abstract}
We prove the $L^p(\mathbb R^n\times\mathbb R^m)$ boundedness of the rough
product singular integral $T_\Omega$, its maximal truncation $T_\Omega^*$,
and the product Marcinkiewicz integral $\mu_\Omega$ for $1<p<\infty$,
assuming that $\Omega\in H^1(S^{n-1}\times S^{m-1})$ satisfies separate
cancellation. These results include kernels that lie outside the Orlicz
classes required in earlier work.
For functions with separate cancellation, we also obtain equivalent
characterizations of the product Hardy space in terms of restricted,
conjugate, and spectral Riesz transforms, as well as radial maximal
functions, non-tangential maximal functions, and area integrals
associated with the spectral and ball Poisson extensions.	The
	operator bounds are proved using restricted Riesz transforms and the
	Calder\'on--Zygmund rotation method. For the Marcinkiewicz integral, we
	further establish integrable bounds for the radial profiles of the
	transformed kernels.

\end{abstract}
\keywords{Product spaces, rough kernels, singular integral operators,
Marcinkiewicz integrals, Hardy spaces}
\subjclass[2020]{Primary 42B20; Secondary 42B25, 42B30}
\maketitle
\tableofcontents

	\section{Introduction}\label{sec:introduction}

	Let $\R^l$ be the $l$-dimensional Euclidean space and $S^{l-1}$ its unit sphere, where $l\ge2$. Throughout the paper, $n,m\ge2$.  We denote by $z'$  the unit vector $z/|z|$ for $z\neq 0$. The Calder\'on-Zygmund singular integral operator and its maximal operator, with a homogeneous kernel on the product space $\R^n\times\R^m$, are defined by
	\begin{eqnarray}\label{operator}
		\left\{
		\begin{array}{l}
			T_{\Omega}(f)(x,y)=\operatorname{p.v.}\,\Liint_{\R^n\times\R^m}K(u,v)f(x-u,y-v)~dudv,\\
			T_{\Omega}^*(f)(x,y)=\sup_{\epsilon'>0,\epsilon''>0}\left|\Liint_{|u|>\epsilon',|v|>\epsilon''}K(u,v)f(x-u,y-v)dudv\right|,\\
			K(u,v)=\Omega(u',v')|u|^{-n}|v|^{-m},
		\end{array}\right.
	\end{eqnarray}
	where $\Omega\in L^1(S^{n-1}\times S^{m-1})$ satisfies the separate cancellation conditions
	\ba\label{cancellation}
\begin{aligned}
\int_{S^{n-1}}\Omega(x',y')\,dx'&=0
&&\text{for a.e. }y'\in S^{m-1},\\
\int_{S^{m-1}}\Omega(x',y')\,dy'&=0
&&\text{for a.e. }x'\in S^{n-1}.
\end{aligned}
	\ea
	For $1\le r\le +\infty$, we denote
	\begin{eqnarray}\label{def:l01}
		L_0^r(S^{n-1}\times S^{m-1})=\{\Omega\in L^r(S^{n-1}\times S^{m-1}):~\Omega~\mbox{satisfies~} (\ref{cancellation})\}.
	\end{eqnarray}
	The Marcinkiewicz integral operator on the product space $\R^n\times\R^m$ is defined by
	\ba\label{marcinkiewicz}
	\mu_{\Omega}(f)(x,y):=\left(\Liint_{(\R_{+}^{1})^2}|\psi_{t,s}^{\Omega}\ast f(x,y)|^2~\frac{dtds}{ts}\right)^{\frac{1}{2}},
	\ea
	where $\Omega\in L_0^1(S^{n-1}\times S^{m-1})$, $\psi_{t,s}^{\Omega}(x,y)=t^{-n}s^{-m}\psi^{\Omega}(\frac{x}{t},\frac{y}{s})$ and
	\ba\label{4}
	\psi^{\Omega}(x,y)=\Omega(x',y')|x|^{1-n}|y|^{1-m}\chi_{|x|\le 1}(x)\chi_{|y|\le 1}(y).
	\ea

	We use a Poisson maximal definition of \(H^1\), following the scalar
	manifold setting of \cite{CL,CWn2}. Let \({\mathcal H}\) be a Hilbert space,
	and let \(D\) be a
	complete Riemannian manifold of dimension \(l_D\) with non-negative Ricci
	curvature. We write \(L^1(D;{\mathcal H})\) for the Bochner \(L^1\)-space of
	\({\mathcal H}\)-valued functions on \(D\), equipped with the norm
	\[
	\|f\|_{L^1(D;{\mathcal H})}
	=
	\int_D \|f(x)\|_{\mathcal H}\,d_Dx.
	\]
	When \({\mathcal H}=\mathbb C\), we simply write \(L^1(D)\).

	Let \(\Delta_D\) denote the nonpositive Laplace--Beltrami operator on \(D\),
	\(\nabla_D\) its gradient, \(P_D=\{P_t^D\}_{t>0}\) its Poisson semigroup,
	\(\{p_t^D\}_{t>0}\) the corresponding Poisson kernels, and \(d_Dx\) the
	Riemannian volume element. For \(f\in L^1(D;{\mathcal H})\), define its radial
	maximal function by
	\[
	P_D^+(f)(x)
	=
	\sup_{t>0}\|P_t^D f(x)\|_{\mathcal H}.
	\]
	We say that \(f\in H^1(D;{\mathcal H})\) if
	\begin{eqnarray}\label{def:h1manifoldd}
		\|f\|_{H^1(D;{\mathcal H})}
		:=
		\|P_D^+(f)\|_{L^1(D)}
		<+\infty.
	\end{eqnarray}
	In the scalar case we write \(H^1(D)\). For mean-zero scalar data,
	this is the definition in \cite[p.~23]{CL}; on compact manifolds we also
	retain the constant functions.

	On \(\mathbb R^l\), this agrees with the classical Hardy space
	\cite{FS1}, and every \(H^1\) function has integral zero. On the sphere,
	it agrees with the atomic definition in \cite{Colzanithesis,Colzani},
	with constants included; see \cite[Theorem 1, p.~24]{CL} on the
	mean-zero subspace. Cancellation will be imposed explicitly when needed.

	Let \(N\) and \(M\) be two complete Riemannian manifolds with non-negative
	Ricci curvatures. For \(f\in L^1(N\times M)\), define
	\[
	P^+(f)(x,y)
	:=
	\sup_{t,s>0}
	\left|
	\left(P_t^N\otimes P_s^M\right)f(x,y)
	\right|.
	\]
	We define the scalar-valued product Hardy space \(H^1(N\times M)\) by
	\begin{eqnarray}\label{def:h1manifoldp}
		H^1(N\times M)
		:=
		\left\{
		f\in L^1(N\times M):
		\|f\|_{H^1(N\times M)}
		=
		\|P^+(f)\|_{L^1(N\times M)}
		<+\infty
		\right\}.
	\end{eqnarray}
	This product definition follows \cite{CWn1}. Hilbert-valued
one-parameter spaces will be used to iterate estimates; the product
space itself is scalar-valued. For the classical theory on the bidisc
and products of upper half-spaces, see
\cite{GunS,CF1,CF,Chenshuo,Merryfield,Sato}.

	In what follows, when the underlying space is clear, we write simply
	\(\|\cdot\|_1\) for the corresponding \(L^1\)-norm. In particular, on the
	product sphere \(S^{n-1}\times S^{m-1}\), the notation \(H^1\) means
	\(H^1(S^{n-1}\times S^{m-1})\). For finite-dimensional vector- or
	tensor-valued functions, \(|\cdot|\) denotes the standard Euclidean, respectively
	Hilbert--Schmidt, norm in the fiber, and the corresponding \(L^1\)-norms are
	taken with respect to these fiber norms.

	For simplicity, we shall write
	\[
	\Omega\in H^1\wedge \eqref{cancellation}
	\]
	to mean that
	\[
	\Omega\in H^1(S^{n-1}\times S^{m-1})
	\quad\text{and}\quad
	\Omega\ \text{satisfies}\ \eqref{cancellation}.
	\]

	Constants may depend on the dimensions, \(p\), and fixed cutoffs,
	but not on the functions or truncation parameters. We write
	\(A\lesssim B\) for \(A\le CB\), and \(A\cong B\) when both
	inequalities hold. Additional dependence is indicated by a subscript.

The classical theory of singular integrals and square functions begins
with \cite{CZ,Stein,ST0}. Product singular integrals and Fourier-transform
methods are developed in \cite{Fefferman,FS2,Duoandikoetxea,DF}.
We focus here on angular conditions that give boundedness for every
$1<p<\infty$.

In the non-product case, the condition
$\Omega\in H^1(S^{l-1})\cap L_0^1(S^{l-1})$ gives the $L^p$-boundedness
of $T_\Omega$ and $T_\Omega^*$ for every $1<p<\infty$, where
\begin{eqnarray}\label{con:cancellationr}
L_0^r(S^{l-1})
:=\left\{f\in L^r(S^{l-1}):\int_{S^{l-1}}f(x')\,dx'=0\right\},
\qquad 1\le r\le\infty.
\end{eqnarray}
For $T_\Omega$, this is due to Connett \cite{Connett} and
Ricci--Weiss \cite{RW}; see also \cite{1977CW,2000stefanove}.
For $T_\Omega^*$, see Fan--Pan \cite{FP} and Grafakos--Stefanov
\cite{2000GS}. These results and a different sufficient kernel condition
are discussed in \cite[Introduction]{GraS}. 
In the product case, the $L^p$-boundedness of $T_\Omega$ for
$1<p<\infty$ under the condition
\[
\Omega\in L(\ln^+L)^2(S^{n-1}\times S^{m-1})
\]
with separate cancellation was obtained in \cite{Chen,HMS}.
Under the same condition, the boundedness of both $T_\Omega$ and
$T_\Omega^*$ was proved in \cite{SQP}; see also \cite{Wang}.
Since $L(\ln^+L)^2(S^{n-1}\times S^{m-1})$ is properly contained in
the product Hardy space, we ask whether the same bounds hold for
$\Omega\in H^1$ satisfying \eqref{cancellation}.
The product atomic theory \cite{CF1,CF} involves more complicated
geometry than its one-parameter counterpart. We instead use restricted
Riesz transforms to control the kernels produced by rotations, following
the one-parameter approach of Ricci--Weiss \cite{RW}; see
Lemma \ref{lemma7}. This gives our first main result.
	\begin{Theorem}\label{thm1}
		Suppose $\Omega\in H^1(S^{n-1}\times S^{m-1})\wedge (\ref{cancellation})$. Then $T_{\Omega}$ and $T_{\Omega}^*$
		are
		bounded on $L^p(\mathbb R^n\times\mathbb R^m)$ for $1<p<+\infty$.
	\end{Theorem}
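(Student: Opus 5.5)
Following the Ricci--Weiss approach, I will represent $\Omega$ through restricted Riesz transforms on each sphere and then use the method of rotations in each factor separately. Let $\Omega\in H^1\wedge(\ref{cancellation})$. The first step, which is where Lemma \ref{lemma7} enters, is a product decomposition of the form
\[
\Omega(u',v')=\sum_{j=1}^{n}\sum_{k=1}^{m} R^{(1)}_j R^{(2)}_k F_{jk}(u',v')
\quad\text{(modulo harmless smoother terms)},
\]
with $\sum_{j,k}\|F_{jk}\|_{L^1(S^{n-1}\times S^{m-1})}\lesssim\|\Omega\|_{H^1}$. Here $R^{(1)}_j$ and $R^{(2)}_k$ are the restricted (tangential) Riesz transforms on $S^{n-1}$ and $S^{m-1}$, for instance $x_j$ times a negative power of the spherical Laplacian, or $Y_j$-type derivatives composed with $(-\Delta)^{-1/2}$. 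In the one-parameter case, Ricci--Weiss write $\Omega=\sum_j x'_j(\ldots)$ in a form that makes the kernel $\Omega(u')|u|^{-n}$ equal to an average over directions $\theta$ of one-dimensional Hilbert-transform-like kernels along the line through $\theta$, weighted by an $L^1$ density. I will do the same in each variable. The separate cancellation (\ref{cancellation}) guarantees that the inverse spherical Laplacians are well defined in each variable slice, and the product $H^1$ Riesz characterization (assumed from the lemma) gives the $L^1$ control of the $F_{jk}$.

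The second step is to insert this decomposition into $K(u,v)$ and pass to polar coordinates. I aim to write
\[
T_\Omega f(x,y)=\sum_{j,k}\int_{S^{n-1}}\int_{S^{m-1}} G_{jk}(\theta,\phi)\,\mathcal H_{\theta,\phi}f(x,y)\,d\theta\,d\phi ,
\]
where $G_{jk}\in L^1$ and $\mathcal H_{\theta,\phi}$ is a directional operator. This operator should be a product of two one-dimensional operators along $\theta$ and $\phi$, each of the type $\mathrm{p.v.}\int \frac{\operatorname{sgn} r}{|r|}\,g(r)\,dr$ with a fixed nice profile $g$, for example a composition of a directional Hilbert transform with a directional Poisson/Riesz-type smoothing. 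Such directional operators are bounded on $L^p(\R^n\times\R^m)$ uniformly in $(\theta,\phi)$, by Fubini and the one-dimensional product Hilbert transform bounds. Minkowski's inequality then gives $\|T_\Omega f\|_p\lesssim\sum\|G_{jk}\|_1\|f\|_p\lesssim\|\Omega\|_{H^1}\|f\|_p$.

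For $T^*_\Omega$, the same representation applies to the truncated kernels, giving directional truncated operators. Their suprema over $\epsilon',\epsilon''$ are controlled by the iterated one-dimensional maximal Hilbert transforms and strong maximal functions along fixed directions, which are again uniformly bounded on $L^p$. The truncations, however, do not commute exactly with the angular decomposition. The error terms will be handled by comparing $\chi_{|u|>\epsilon'}$ with a smooth cutoff and bounding the difference by directional maximal averages $M_\theta\otimes M_\phi$, weighted by $|G_{jk}|$.

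I expect the main obstacle to be the first step. We need a decomposition in which the kernel of each piece, after the radial integration, has an honest one-dimensional odd Calder\'on--Zygmund profile with constants independent of direction, and simultaneously the densities $G_{jk}$ are controlled in $L^1$ (not $L\log L$) by the product $H^1$ norm. My first attempt will be to take $F_{jk}=R^{(1)}_jR^{(2)}_k\Omega$, which lies in $L^1$ by the product Riesz transform characterization, and then verify via spherical harmonics that $\Omega=c\sum_{j,k}\mathcal R^{(1)*}_j\mathcal R^{(2)*}_k F_{jk}$. The adjoint operators should have explicit kernels of the form $\langle x',\theta\rangle$-type singularities, whose homogeneous extensions integrate radially to Hilbert-type kernels.
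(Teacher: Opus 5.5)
Your plan for $T_\Omega$ follows the paper's Ricci--Weiss argument in spirit: restricted Riesz transforms, $L^1$ control from Theorem \ref{thm3} via Lemma \ref{lemma7}, then rotations. However, your first step as set up does not produce odd--odd densities. If $\Omega$ is odd in $u'$, then by \eqref{def:tildeR:dist} the restricted Riesz transform in that variable gives a function that is \emph{even} in $u'$, plus a term $a_j\delta_0$. So $F_{jk}=\widetilde R^{(1)}_j\widetilde R^{(2)}_k\Omega$ is in general not odd in both variables, and the rotation method does not apply to it. You need the parity decomposition \eqref{7} first, and you should transform only in the variables where a component is even. There the $\delta_0$-terms vanish and you get the exact identity $T_{\alpha,\beta}=\pm(\mathfrak R^{\alpha}_{\R^n}\otimes\mathfrak R^{\beta}_{\R^m})\cdot\mathcal T_{\alpha,\beta}$. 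Your $\mathcal H_{\theta,\phi}$ is then simply $H_\theta\otimes H_\phi$ composed with Euclidean (not directional) Riesz transforms in the even variables. No smoother remainders and no adjoint spherical kernels are needed.

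The genuine gap is in $T^*_\Omega$. Your claim that \emph{the same representation applies to the truncated kernels, giving directional truncated operators} is false for every component with an even variable. The representation contains Euclidean Riesz transforms, which are nonlocal and do not commute with multiplying the kernel by $\chi_{\{|u|>\epsilon'\}}\chi_{\{|v|>\epsilon''\}}$. Hence the truncated kernel of $T_\Omega$ is not an $L^1$-average of truncated directional Hilbert transforms. The discrepancy is a commutator between Riesz transform and truncation, not a sharp-versus-smooth cutoff difference, and $|G_{jk}|$-weighted $M_\theta\otimes M_\phi$ does not dominate it.

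The paper handles this as follows. It smooths the truncation and writes $L^{\alpha,\beta}_{t,s}*f=\mathcal L^{\alpha,\beta}_{t,s}*f^{\alpha,\beta}$ with $\mathcal L^{\alpha,\beta}=(\mathfrak R^{\alpha}_{\R^n}\otimes\mathfrak R^{\beta}_{\R^m})(L^{\alpha,\beta})$, i.e.\ it applies the Riesz transforms to the truncated kernel. It then splits $\mathcal L^{\alpha,\beta}_{t,s}$ over the regions $E^{i,j}_{t,s}$. The principal parts, in the far and annular regions, are truncated rotation kernels with density $\omega_{\alpha,\beta}$. The remaining pieces are, in each variable:
\begin{itemize}
\item a kernel of size $t^{-n}$ on the inner region;
\item a tail of size $t|x|^{-n-1}$ on the far region, from cancellation and \eqref{est:E2};
\item the singular error $t^{-n-1}|x-z|^{1-n}$ on the annulus, from \eqref{est:E4}.
\end{itemize}
These errors carry the untransformed $\Omega_{\alpha,\beta}$ as density, and in the even--even case also the partial transforms $\Omega'_{e,e},\Omega''_{e,e}$; this is why Lemma \ref{lemma7} includes them. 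They are controlled by the radial and translated radial maximal operators $\Phi^{*}_{\R^n}$, $\Phi^{*}_{u,\R^n}$ composed with directional maximal Hilbert transforms.

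Finally, the sharp-versus-smooth difference in one variable still leaves a full rough singular integral in the other variable, so $M_\theta\otimes M_\phi$ cannot control it either. The paper uses the one-parameter maximal bound for mean-zero $H^1(S^{m-1})$ kernels on each slice, together with $\int_{S^{n-1}}\|\Omega_{\alpha,\beta}(u',\cdot)\|_{H^1(S^{m-1})}\,du'\le\|\Omega_{\alpha,\beta}\|_{H^1}$.
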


We prove Theorem \ref{thm1} in Section
\ref{Singular Integrals on Product spaces}. For $T_\Omega$, Lemma
\ref{lemma7} and the rotation method reduce the proof to directional
Hilbert transforms. For $T_\Omega^*$, we use the odd-odd functions
$\omega_{\alpha,\beta}$ in \eqref{def:omegaab} and estimate the
additional terms arising from the truncations. The principal terms are
controlled by maximal directional Hilbert transforms on $\mathbb R^2$.

For the non-product Marcinkiewicz integral, the mean-zero
$H^1(S^{l-1})$ kernel condition gives boundedness for every
$1<p<\infty$ \cite{DFP}; see \cite{XCY} for another proof.
The condition $\Omega\in L(\ln^+L)^{1/2}(S^{l-1})\cap L_0^1(S^{l-1})$
also suffices \cite{SQChP}; Walsh \cite{Walsh} established the $L^2$ case.
For related Marcinkiewicz operators, see \cite{CFP,Ding,A}.
In the product case, \cite[Theorem 2]{QSChP} gives the full $L^p$ range
for $L\ln^+L$ kernels with separate cancellation. Earlier developments
appear in \cite{CDF,CFY2001,CFY,Choi,YCF,CWn3}; the history of the
conjecture in \cite{Ding2001} is described in \cite[p.~228]{QSChP}.
Since $L\ln^+L$ does not contain the product Hardy space, we also prove
the following.
	\begin{Theorem}\label{thm2}
		Suppose $\Omega\in H^1(S^{n-1}\times S^{m-1})\wedge (\ref{cancellation})$. Then $\mu_{\Omega}$
		is
		bounded on $L^p(\mathbb R^n\times\mathbb R^m)$ for $1<p<+\infty$.
	\end{Theorem}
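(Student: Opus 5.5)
Our plan for Theorem \ref{thm2} follows the route of Theorem \ref{thm1}: use restricted Riesz transforms to rewrite $\Omega$, then reduce $\mu_\Omega$ to square functions built from one-dimensional radial profiles. We work in polar coordinates in both variables. For $f\in L^p$ we write
\[
\psi^\Omega_{t,s}\ast f(x,y)=\iint_{S^{n-1}\times S^{m-1}}\Omega(u',v')\,\Big(\int_0^1\!\!\int_0^1 f(x-tru',y-s\rho v')\,dr\,d\rho\Big)\,du'dv'.
\]
The inner average is a product of directional averages along the lines $\mathbb R u'$ and $\mathbb R v'$. The difficulty is that $\Omega$ is only in $H^1$, not $L^q$. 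We therefore invoke Lemma \ref{lemma7} to write $\Omega$, which satisfies $H^1\wedge$\eqref{cancellation}, as a finite sum $\Omega=\sum_{j,k}R_j^{(1)}R_k^{(2)}\Omega_{jk}$ of restricted Riesz transforms. Here the $\Omega_{jk}$ are controlled by $\|\Omega\|_{H^1}$ in $L^1$. As in the proof of Theorem \ref{thm1}, we move the Riesz transforms onto the kernel side, so that $\psi^\Omega$ becomes an integral over $(\alpha,\beta)\in S^{n-1}\times S^{m-1}$ of $\Omega_{jk}(\alpha,\beta)$ against kernels of the form $\Phi_j(\langle u',\alpha\rangle)\Psi_k(\langle v',\beta\rangle)|u|^{1-n}|v|^{1-m}\chi\chi$. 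We then apply the rotation method: integrating over the hyperplanes orthogonal to $\alpha$ and $\beta$ turns each kernel into a tensor product $h_j^{(1)}(r)\,h_k^{(2)}(\rho)$ of radial profiles on $\mathbb R\times\mathbb R$, composed with the directional maps $x\mapsto \langle x,\alpha\rangle$, $y\mapsto\langle y,\beta\rangle$.

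The key step, and the one flagged in the abstract, is to show that these transformed radial profiles $h(r)$ are integrable, with decay at infinity of the form $|h(r)|\lesssim r^{-1-\delta}$ for $|r|>2$, and with mean zero. The mean zero comes from the separate cancellation. We would compute $h$ explicitly as $h(r)=\int_{|z|\le1}\Phi(\cdots)|z|^{1-n}\,dz$ restricted to the line through the direction $\alpha$. Then we would show that $h$ is $O(1)$ near the origin (using $\chi_{|x|\le1}$), has at worst a logarithmic singularity at $|r|=1$, and decays like $|r|^{-2}$ at infinity; the decay uses cancellation and one Taylor expansion of the Riesz-type factor. This is the main obstacle. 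The Riesz transforms destroy the compact support of $\psi$, and the restricted Riesz kernels are singular. What we first try is to split $h$ into the part with $|r|\le 2$ and a tail; the tail is handled by smoothness of the restricted Riesz kernel away from the diagonal.

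Given such profiles, we use Minkowski's inequality in $(\alpha,\beta)$ with weight $|\Omega_{jk}(\alpha,\beta)|$, which is integrable. This reduces the problem to uniform (in $\alpha,\beta$) $L^p(\mathbb R^n\times\mathbb R^m)$ bounds for the directional product square function
\[
g_{\alpha,\beta}(f)=\Big(\iint |(h^{(1)}_t\otimes h^{(2)}_s)\ast_{\alpha,\beta} f|^2\frac{dt\,ds}{ts}\Big)^{1/2}.
\]
By Fubini in the orthogonal variables, this is a two-parameter square function on $\mathbb R\times\mathbb R$. Each $h^{(i)}$ is integrable, has mean zero, and has the decay above, so $|\hat h^{(i)}(\xi)|\lesssim\min(|\xi|^{\epsilon},|\xi|^{-\epsilon})$. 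Consequently $g_{\alpha,\beta}$ is bounded on $L^2$. For $p\ne2$, we use a Littlewood--Paley decomposition in each variable together with the vector-valued maximal estimate for $\sup_t|h_t|\ast$, which is dominated by the Hardy--Littlewood maximal function since $h$ has an integrable radially decreasing majorant. Interpolation, in the Duoandikoetxea--Rubio de Francia manner iterated in the two parameters, then gives the bound for all $1<p<\infty$. The Hilbert-space-valued framework introduced before \eqref{def:h1manifoldp} supports exactly this iteration.
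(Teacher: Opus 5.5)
There is a genuine gap in your reduction to directional square functions. A secondary point first: Lemma \ref{lemma7} does not give a representation $\Omega=\sum_{j,k}\widetilde R^{(1)}_j\widetilde R^{(2)}_k\Omega_{jk}$ with $\Omega_{jk}\in L^1$. It bounds in $L^1$ the restricted Riesz transforms \emph{of} the parity components of $\Omega$, namely $\omega_{o,o},\dots,\omega_{e,e}$, $\Omega'_{e,e}$, $\Omega''_{e,e}$.

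The real problem is the next step. Expanding $\widetilde R_j\Omega_{jk}$ through its spherical kernel $k_j(u',\alpha)$ gives, for each $(\alpha,\beta)$, the Marcinkiewicz kernel $k_j(u',\alpha)k_k(v',\beta)|u|^{1-n}|v|^{1-m}\chi_{\{|u|\le1\}}\chi_{\{|v|\le1\}}$. Its angular part is a principal-value Calder\'on--Zygmund kernel on the spheres, with a non-integrable singularity of order $|u'-\alpha|^{1-n}$. This is a genuinely $(n+m)$-dimensional operator, not one acting along $\mathbb R\alpha\times\mathbb R\beta$. Integrating such a kernel over the hyperplanes $\{\langle u,\alpha\rangle=r\}$ is a Radon projection, and $\int K(u)f(x-u)\,du=\int h(r)f(x-r\alpha)\,dr$ holds only for $f$ constant on those hyperplanes. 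So uniform bounds for your $g_{\alpha,\beta}$, with fixed profiles weighted by $|\Omega_{jk}(\alpha,\beta)|$, say nothing about $\mu_\Omega$.

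If you instead use the actual rotation method (polar coordinates along the kernel's own rays $u',v'$), the weight is $\Omega(u',v')$ again. The radial profile is then $\chi_{(0,1)}$, which has no cancellation, so nothing is gained. In particular, the mean zero of the one-dimensional profiles does not come from separate cancellation; it has to be manufactured.

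The missing idea is that the transformed profiles necessarily depend on the direction, nonlocally in $\Omega$. You therefore need an integrable \emph{envelope}, not a fixed $h$. The paper keeps the rotation along the rays $tu'$, $sv'$. After the parity decomposition, the odd profile $\operatorname{sgn}(r)\chi(|r|)$ is used directly in odd variables. In even variables it splits $\chi=\rho+\lambda$, with $\int\rho=0$ and $\lambda\in C_c^\infty$ supported in $[1,2]$, so the $\rho$-part costs only $\|\Omega\|_1$. On the $\lambda$-part, $\mathfrak R_{\mathbb R^n}$ and $\mathfrak R_{\mathbb R^m}$ are applied to the kernel and compensated on $f$ via $\mathfrak R\cdot\mathfrak R=-Id$. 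The resulting ray profiles $\sigma^{(1,u',v')}$, $\sigma^{(2,u',v')}$, $\zeta^{(u',v')}$ of \eqref{42} are odd, hence mean zero.

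The heart of the proof is Lemma \ref{lemma10}. Each profile is written as $c_j(t)\widetilde R_{S^{l-1}}a+E^{(j)}_{l,t}a$ with $|E^{(j)}_{l,t}a|\le C\mathcal B_l(|a|)$, where $\mathcal B_l$ has the integrable kernel $J_l$. This yields the pointwise bound $\Omega^*\lesssim|\omega|+\mathcal B_m(|\Omega'|)+\mathcal B_n(|\Omega''|)+\mathcal B_n\mathcal B_m(|\Omega|)$, and similarly for $\Omega_1^*,\Omega_2^*$, on the $C^1$ norms weighted by $(1+|r|)^{2+j}$. These envelopes are integrable by Lemma \ref{lemma7} and the $L^1$-boundedness of $\mathcal B_l$. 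Only then do Lemmas \ref{lemma8}, \ref{lemma9}, \ref{lemma11} give slice bounds with constants $\Omega_k^*(u',v')$ and $\Omega^*(u',v')$, and Minkowski's inequality finishes.

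Your final Littlewood--Paley/Duoandikoetxea--Rubio de Francia step would be fine for a single nice profile. But the proposal supplies neither the correct profiles nor the $H^1$-controlled envelope on which the argument hinges.
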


In Section \ref{Marcinkiewicz integrals on product spaces}, rotations
reduce Theorem \ref{thm2} to directional Littlewood--Paley functions.
The radial profiles of the transformed kernels are controlled by
$\|\Omega\|_{H^1}$ in Lemma \ref{lemma10}; see also
Remark \ref{rmk:4.5}.

Sections \ref{sec:h1one} and \ref{sec:product} compare the one-parameter
and product spherical Hardy-space quantities. These comparisons give
the following characterization, which supplies the restricted Riesz
estimates used in the two operator theorems.
	\begin{Theorem}\label{thm3} For every \(f\in L_0^1(S^{n-1}\times S^{m-1})\), one has
		\[
\begin{aligned}
\|f\|_{H^1}
&\cong\|P^*(f)\|_1\cong\|A(f)\|_1\\
&\cong\|(\mathfrak R_{S^{n-1}}'\otimes\mathfrak R_{S^{m-1}}')f\|_1\\
&\cong\|(\mathcal R_{S^{n-1}}'\otimes\mathcal R_{S^{m-1}}')f\|_1
\cong\|(\widetilde R_{S^{n-1}}'\otimes\widetilde R_{S^{m-1}}')f\|_1\\
&\cong\|C_+(f)\|_1\cong\|C_*(f)\|_1\cong\|S(f)\|_1.
\end{aligned}
\]
	\end{Theorem}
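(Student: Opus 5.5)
The plan is a chain of equivalences, most of which are obtained by iterating one-parameter spherical results in each variable with Hilbert-space-valued arguments. The product estimates then come from the one-parameter ones by a Fubini/tensor argument. The first step is the one-parameter theory on $S^{l-1}$ with values in a Hilbert space $\mathcal H$, from Section \ref{sec:h1one}. For $g\in L^1_0(S^{l-1};\mathcal H)$ I would show that $\|P^+_{S^{l-1}}g\|_1$ is comparable to each of the following:
\begin{itemize}
\item the nontangential maximal function of the spectral Poisson extension;
\item the area integral;
\item $\|g\|_1+\|\mathfrak R'g\|_1$, and likewise for the conjugate transforms $\mathcal R'$ and the spectral transforms $\widetilde R'$;
\item the radial maximal function $C_+$, nontangential maximal function $C_*$ and area integral $S$ of the ball Poisson extension to the unit ball in $\mathbb R^l$.
\end{itemize}
In the scalar case these are in \cite{CL,Colzani}. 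Their proofs go through atomic decompositions, or through subharmonicity of $|F|^{q}$ for Riesz systems with $q<1$ close to $1$, and both transfer verbatim to $\mathcal H$-valued functions, since subharmonicity of $\|F\|_{\mathcal H}^q$ for $\mathcal H$-valued harmonic systems still holds. A further point is that the restricted Riesz transforms $\mathfrak R'$ are obtained from the Euclidean Riesz transforms of the homogeneous degree-$0$ (or $-l+1$) extension restricted to the sphere. So their equivalence with $H^1(S^{l-1})$ reduces to the Euclidean Riesz characterization \cite{FS1} applied to a localized homogeneous extension, plus an error operator with integrable kernel on the sphere.

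The second step is to iterate these results in Section \ref{sec:product}. For $f$ satisfying \eqref{cancellation}, I view $f(x',\cdot)$ as an element of $L^1_0(S^{m-1})$ and $f(\cdot,y')$ as an element of $L_0^1(S^{n-1})$. Take the product radial maximal function $P^+$. Fixing $y'$, I apply the one-parameter equivalence in $x'$ with $\mathcal H=L^\infty$-type sup over $s$; this is delicate because $\ell^\infty$ is not a Hilbert space. I would therefore instead go through square functions, which are Hilbert valued.
\begin{itemize}
\item First prove $\|f\|_{H^1}\cong\|S(f)\|_1$ using $\mathcal H=L^2(ds/s)$-valued versions: the product area integral is an iterated area integral, and Fubini lets me apply the one-parameter estimate in each variable successively.
\item The bound $\|S f\|_1\lesssim\|P^* f\|_1$ (with $P^*$ the product nontangential maximal function) follows from the product version of the good-$\lambda$ or Merryfield lemma \cite{Merryfield}.
\item The bound $\|P^*f\|_1\lesssim \|P^+f\|_1$ is a product Fefferman–Stein argument using the strong maximal function.
\item Closing the loop with $\|P^+ f\|_1\lesssim\|Sf\|_1$ is done via atomic decomposition from the area integral, i.e. Chang–Fefferman rectangle atoms \cite{CF1,CF} transplanted to the sphere, or via duality with product BMO.
\end{itemize}

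The third step handles the Riesz transforms. I write
$(\mathfrak R'_{S^{n-1}}\otimes\mathfrak R'_{S^{m-1}})f$ for the tensor family including identity components. Here $\|(\mathfrak R'\otimes\mathfrak R')f\|_1\lesssim\|f\|_{H^1}$ follows by boundedness of one-parameter Riesz transforms on the Hilbert-valued $H^1$, applied in each variable. Conversely, the full vector $F=(\mathfrak R'\otimes\mathfrak R')f$ forms a bi-harmonic conjugate system in each variable, with extensions $u(x,t;y,s)$, after passing to the spectral or ball extension. Then $\|F(\cdot;y,s)\|^{q}$ is subharmonic separately in each variable for some $q<1$. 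Iterating the Poisson majorization $\|F\|^q\le P_t\otimes P_s(\|F\|^q)$ gives $P^+f\le (M_{\rm str}(|F|^q))^{1/q}$ with the strong maximal function, whose $L^{1/q}$ boundedness yields $\|P^+f\|_1\lesssim\|F\|_1$. The conjugate and spectral systems $\mathcal R'$ and $\widetilde R'$ differ from $\mathfrak R'$ by operators bounded on the one-parameter Hilbert-valued $H^1$ and on $L^1$ after composition (spectral multipliers smooth of order zero, of the type $\sqrt{-\Delta}(\sqrt{-\Delta+c})^{-1}$). So the equivalence propagates. The ball quantities $C_+$, $C_*$ are handled the same way, since ball and spectral Poisson kernels differ by harmless factors.

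The main obstacle is the reverse inequality $\|P^+f\|_1\lesssim\|F\|_1$ on the sphere. Two things are needed there. First, I need a genuine multi-harmonic conjugate system for restricted Riesz transforms, which are not exactly Cauchy–Riemann systems on the sphere. Second, I need to control the lower-order commutator errors uniformly in both parameters. My first attempt would be to work with the ball extension, where the homogeneous extension gives honest Euclidean Stein–Weiss systems in each factor, and to absorb the error terms using $L^1$-bounded operators combined with the cancellation \eqref{cancellation}.
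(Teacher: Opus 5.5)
Your outline has the right shape: iterate the Hilbert-valued one-parameter area--Riesz equivalences, and dominate maximal functions by the Riesz vector using separate subharmonicity of $|F|^q$ and the $L^{1/q}$-boundedness of $M_nM_m$. But the decisive inequality, maximal function $\lesssim$ Riesz vector, is left open. You flag it as the main obstacle and only propose absorbing ``commutator errors'' with $L^1$-bounded operators. That does not suffice: an error term inside a maximal-function estimate must itself be controlled in a product maximal norm, and nothing in your sketch provides this.

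The missing idea is an \emph{exact} product conjugate system, so that no errors arise. For the ball extension the paper uses
\[
v(x,y)=\int_0^1\!\!\int_0^1 (C^{S^{n-1}}\otimes C^{S^{m-1}})f(\rho x,\sigma y)\,\frac{d\rho}{\rho}\,\frac{d\sigma}{\sigma},
\qquad F=\nabla_x\otimes\nabla_y v .
\]
This is well defined because separate cancellation makes the extension vanish at the origins. The construction works as follows.
\begin{itemize}
\item Each column of $F$ is a Stein--Weiss system in $x$, and each row is one in $y$.
\item Euler's identity gives $\langle x\otimes y,F\rangle=(C^{S^{n-1}}\otimes C^{S^{m-1}})f$, so $|C(f)|\le|F|$.
\item $F$ is the product Poisson extension of $\Xi(f)$. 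By orthogonality of radial and tangential parts, $|\Xi(f)|$ equals the pointwise norm of $(f,\mathcal R_{S^{n-1}}f,\mathcal R_{S^{m-1}}f,(\mathcal R_{S^{n-1}}\otimes\mathcal R_{S^{m-1}})f)$.
\end{itemize}
Refined Kato, Poisson majorization and $M_nM_m$ then give $\|C_*(f)\|_1\lesssim\|(\mathcal R_{S^{n-1}}'\otimes\mathcal R_{S^{m-1}}')f\|_1$.

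For the spectral semigroup, the exact system comes from the Hodge Laplacian on one-forms. Here $u=P_\tau g$ and $\omega=\vec P_\tau\mathfrak R g$ satisfy $d\omega=0$, $\partial_\tau\omega=-du$, $\partial_\tau u=-d^*\omega$. Bochner's formula with refined Kato then gives subharmonicity of $|\mathcal U|^q$.

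Your fallback for closing the loop, Chang--Fefferman atoms or product BMO duality transplanted to the product sphere, is not carried out. It would require the full product atomic machinery for these kernels, which the paper avoids entirely.

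Several further steps fail as written.
\begin{itemize}
\item The forward bound $\|(\mathfrak R'\otimes\mathfrak R')f\|_1\lesssim\|f\|_{H^1}$ cannot be obtained by applying one-parameter Riesz bounds in each variable. For the mixed component you would need $\|\sup_t|\mathfrak R_{S^{m-1}}P_t^{S^{n-1}}f|\|_1\lesssim\|P^+(f)\|_1$, i.e.\ an $\ell^\infty$-valued Riesz estimate. That is exactly the obstruction you noticed yourself. The single transforms are fine, since $\sup_t|P_t^{S^{n-1}}f|\le P^+(f)$ a.e.
\item The paper goes through the area function instead. Proposition~\ref{proposition5} gives $\|A(f)\|_1\lesssim\|P^*(f)\|_1\lesssim\|P^+(f)\|_1$, and iterating the Hilbert-valued equivalence gives $\|A(f)\|_1\cong\|(\mathfrak R'\otimes\mathfrak R')f\|_1$. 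Your Step 2 contains these ingredients, but you did not combine them this way.
\item Iteration in each variable yields $\|S(f)\|_1\cong\|(\mathcal R'\otimes\mathcal R')f\|_1$, not $\|f\|_{H^1}\cong\|S(f)\|_1$.
\item For the same $\ell^\infty$ reason, a product comparison of ball and spectral maximal functions does not follow from one-parameter kernel comparisons. In the paper they are linked only through the Riesz vector.
\item Your labels are swapped: $\mathfrak R$ is the spectral transform and $\widetilde R$ the restricted one.
\item Order-zero multipliers are not $L^1$-bounded in general. What is needed, and proved in the paper, is that $\mathcal R-\mathfrak R$ is an explicit operator of order $-1$ bounded on $L^1$. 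The equivalence $\|\widetilde R'f\|_1\cong\|\mathcal R'f\|_1$ is taken from Ricci--Weiss.
\end{itemize}
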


Here the spectral Poisson and Riesz quantities are defined in
\eqref{def:h1manifoldp}, \eqref{def:prona}, \eqref{def:maninar}, and
\eqref{def:rieszp}. The restricted and conjugate transforms are defined
in \eqref{def:widetilder} and \eqref{def:conjugate}; primes denote
augmentation by $Id$. The ball-Poisson quantities are defined in
\eqref{def:procs}.

We use \cite{chineseequivalence} for its
one-parameter ball-Poisson area--maximal equivalence for mean-zero data
and its product estimate
$\|S(f)\|_1\lesssim\|C_*(f)\|_1$
(Theorems 1.1 and 1.2 there, respectively). Under separate cancellation,
we prove the reverse product estimate and connect these ball-Poisson
quantities with the Poisson semigroup and the three Riesz transform
families in Theorem \ref{thm3}. The restricted Riesz transforms then
provide the kernel estimates used in Theorems \ref{thm1} and \ref{thm2}.
The rough singular integral, maximal truncation, and Marcinkiewicz
integral bounds in those theorems are not contained in
\cite{chineseequivalence}.

	We further note the following related works.
	\begin{Remark}
		(i)	Earlier treatments of multiparameter Hardy spaces and area--maximal
comparisons include \cite{Chenshuo,Sato}.

		(ii) For classical product atomic decompositions, see \cite{CF1,RFefferman}.
Spline characterizations are studied by Chang--Ciesielski \cite{CF0}.

		(iii) Product Hardy spaces associated with operators and with spaces
of homogeneous type are studied in \cite{CDLWY,HLL,HLPW}. Related
characterizations by Riesz transforms and maximal functions in the
Bessel setting are established in \cite{DLWY2021}. Our comparisons
concern the concrete spherical operators required for the rough-kernel
applications above.

		(iv) For spherical Hardy spaces, see Connett \cite{Connett},
Ricci--Weiss \cite{RW}, and Colzani \cite{Colzanithesis,Colzani}.
Related compact-manifold results appear in \cite{Colzani1988,CT1990}.

		(v) For Hardy spaces and area--maximal comparisons on manifolds
with non-negative Ricci curvature, see \cite{CL,CW,CWn1,CWn2}.
For BMO decompositions on normal Lie groups, see \cite{WC}.

	\end{Remark}

	\section{Proof of Theorem \ref{thm1}}\label{Singular Integrals on Product spaces}

Throughout this section, we identify homogeneous principal value distributions
with their associated convolution operators. Thus, for
$
\sigma\in L_0^1(S^{l-1}),
$
we use
\[
\operatorname{p.v.}\,\frac{\sigma(\cdot)}{|\cdot|^l}
\]
to denote both the principal value distribution and the convolution operator
with this kernel. With the same convention, the Dirac mass \(\delta_0\)
corresponds to the identity convolution operator, since \(\delta_0*f=f\).
All identities below involving such kernels are therefore understood either in
\({\mathcal S}'\) or as identities of convolution operators.

 For \(l\ge2\),
write
\begin{eqnarray}\label{def:rieszkernel}
K_l(z)
=
\gamma_l\frac{z}{|z|^{l+1}},
\qquad z\in\mathbb R^l\setminus\{0\},
\end{eqnarray}
 and \(\operatorname{p.v.}\,\,\,K_l\) denotes the corresponding vector-valued principal
value distribution. We denote by
\[
{\mathfrak R}_{\mathbb R^l}
=
(R_1,\ldots,R_l)
\]
the vector-valued Euclidean Riesz transform, namely
$
{\mathfrak R}_{\mathbb R^l}f
=
(\operatorname{p.v.}\,\,\,K_l)*f.
$
Here \(\gamma_l\) is the usual dimensional normalizing constant. Its precise value
is irrelevant for our estimates. Constants denoted by \(C_l\) or \(C_{n,m}\)
may change from line to line and depend only on the dimensions.

For any function \(\Omega\in L_0^1(S^{n-1}\times S^{m-1})\), we have the following
parity decomposition:
\ba\label{omegade}
\Omega=\Omega_{o,o}+\Omega_{o,e}+\Omega_{e,o}+\Omega_{e,e},
\ea
where the components are defined explicitly as
\ba\label{7}
\left\{
\begin{array}{l}
	\Omega_{o,o}(x',y')=\frac{1}{4}\left(\Omega(x',y')-\Omega(-x',y')-\Omega(x',-y')+\Omega(-x',-y')\right),\\
	\Omega_{o,e}(x',y')=\frac{1}{4}\left(\Omega(x',y')-\Omega(-x',y')+\Omega(x',-y')-\Omega(-x',-y')\right),\\
	\Omega_{e,o}(x',y')=\frac{1}{4}\left(\Omega(x',y')+\Omega(-x',y')-\Omega(x',-y')-\Omega(-x',-y')\right),\\
	\Omega_{e,e}(x',y')=\frac{1}{4}\left(\Omega(x',y')+\Omega(-x',y')+\Omega(x',-y')+\Omega(-x',-y')\right).
\end{array}\right.
\ea
Each component \(\Omega_{\alpha,\beta}\), \(\alpha,\beta\in\{o,e\}\), belongs to
\(L_0^1(S^{n-1}\times S^{m-1})\). Here the subscripts \(o\) and \(e\) denote
odd parity and even parity, respectively. For example, \(\Omega_{o,e}\) is
odd with respect to the first variable \(x'\) and even with respect to the
second variable \(y'\).
The purpose of the following transformation is to reduce every parity component
of the kernel to an odd-odd kernel, so that the rotation method can be applied.

	For \(\alpha,\beta\in\{o,e\}\), define
	\[
	K_{\alpha,\beta}(x,y)
	=
	\Omega_{\alpha,\beta}(x',y')|x|^{-n}|y|^{-m},
	\]
	and
	\[
	{\mathcal K}_{\alpha,\beta}(x,y)
	=
	\omega_{\alpha,\beta}(x',y')|x|^{-n}|y|^{-m},
	\]
	where
\begin{eqnarray}\label{def:omegaab}
	\omega_{\alpha,\beta}
	=
	\widetilde R_{S^{n-1}}^{\alpha}
	\otimes
	\widetilde R_{S^{m-1}}^{\beta}
	(\Omega_{\alpha,\beta}).
\end{eqnarray}
Here
\[
\widetilde R_{S^{l-1}}^{e}
=
\widetilde R_{S^{l-1}},
\qquad
\widetilde R_{S^{l-1}}^{o}
=
Id,
\]
where \(Id\) denotes the identity operator acting on functions on
\(S^{l-1}\), and \(\widetilde R_{S^{l-1}}\) is the restricted Riesz transform
on \(S^{l-1}\) defined in \cite{RW}; see also \eqref{def:widetilder} below.
Thus \(\omega_{o,o}\) is scalar-valued, \(\omega_{e,o}\) is
\(\mathbb C^n\)-valued, \(\omega_{o,e}\) is \(\mathbb C^m\)-valued, and
\(\omega_{e,e}\) is \(\mathbb C^n\otimes\mathbb C^m\)-valued.

According to the finite-dimensional fiber convention stated in the
introduction,
\[
\|\omega_{\alpha,\beta}\|_1
=
\int_{S^{n-1}\times S^{m-1}}
|\omega_{\alpha,\beta}(x',y')|
\,dx'dy'.
\]
The same convention will be used for \(\Omega_{e,e}'\), \(\Omega_{e,e}''\), and
all other finite-dimensional vector- or tensor-valued spherical functions.

	We define the corresponding operators by
	\[
	T_{\alpha,\beta}f
	=
	(\operatorname{p.v.}\,\,K_{\alpha,\beta})*f,
	\qquad
	{\mathcal T}_{\alpha,\beta}f
	=
	(\operatorname{p.v.}\,\,{\mathcal K}_{\alpha,\beta})*f.
	\]
Thus \(T_{\alpha,\beta}\) is the operator associated with the original parity
component \(\Omega_{\alpha,\beta}\), while
\({\mathcal T}_{\alpha,\beta}\) is the operator associated with the transformed
odd-odd spherical function \(\omega_{\alpha,\beta}\). The latter operator is the
one to which the rotation method will be applied.
 By the product identities recalled below, we first have
 \[
 \big(
 \mathfrak R_{\mathbb R^n}^{\alpha}
 \otimes
 \mathfrak R_{\mathbb R^m}^{\beta}
 \big)
 T_{\alpha,\beta}
 =
 {\mathcal T}_{\alpha,\beta}.
 \]Since
\[
\mathfrak R_{\mathbb R^l}\cdot \mathfrak R_{\mathbb R^l}
=
-Id,
\qquad l=n,m,
\]
we obtain
\[
T_{\alpha,\beta}
=
(-1)^{\mathbf{1}_{\alpha\neq\beta}}
\big(
\mathfrak R_{\mathbb R^n}^{\alpha}
\otimes
\mathfrak R_{\mathbb R^m}^{\beta}
\big)
\cdot
{\mathcal T}_{\alpha,\beta},
\]
where \(\mathbf{1}_{\alpha\neq\beta}=0\) if \(\alpha=\beta\) and
	\(\mathbf{1}_{\alpha\neq\beta}=1\) otherwise, and
	\[
	\mathfrak R_{\mathbb R^l}^{e}
	=
	\mathfrak R_{\mathbb R^l},
	\qquad
	\mathfrak R_{\mathbb R^l}^{o}
	=
	Id,
	\qquad l=n,m.
	\]
	The dot denotes the natural contraction when the corresponding Riesz
	transform is vector-valued.

In the even-even case, we shall also use the following partially transformed
spherical functions:
	\begin{eqnarray}\label{def:omega}
	\left\{
	\begin{array}{l}
		\Omega_{e,e}'(x',y')=
		\widetilde R_{S^{n-1}}(\Omega_{e,e}(\cdot,y'))(x'),\\
		\Omega_{e,e}''(x',y')=
		\widetilde R_{S^{m-1}}(\Omega_{e,e}(x',\cdot))(y').
	\end{array}
	\right.
\end{eqnarray}
Both \(\Omega_{e,e}'\) and \(\Omega_{e,e}''\) are vector-valued functions.

We now recall the restricted Riesz transform on the sphere.

For
$
\sigma\in C^\infty(S^{l-1})\cap L_0^1(S^{l-1}),
$ and for each \(j=1,\ldots,l\),
by the definition of the restricted Riesz transform, there exist a unique
function
\[
\sigma_j\in C^\infty(S^{l-1})\cap L_0^1(S^{l-1})
\]
and a unique constant \(a_j(\sigma)\in\mathbb C\) such that
\begin{eqnarray}\label{def:tildeR:dist}
	R_j\circ
	\left(
	\operatorname{p.v.}\,\,\frac{\sigma(\cdot)}{|\cdot|^l}
	\right)
	=
	a_j(\sigma)\delta_0
	+
	\operatorname{p.v.}\,\,\frac{\sigma_j(\cdot)}{|\cdot|^l}.
\end{eqnarray}
More explicitly, under the convention above, this means that, for every \(f\in{\mathcal S}(\mathbb R^l)\),
\[
R_j\left[
\left(
\operatorname{p.v.}\,\,\frac{\sigma(\cdot)}{|\cdot|^l}
\right)*f
\right]
=
a_j(\sigma)f
+
\left(
\operatorname{p.v.}\,\,\frac{\sigma_j(\cdot)}{|\cdot|^l}
\right)*f.
\]
Thus \eqref{def:tildeR:dist} is an identity of translation-invariant
operators, or equivalently of their distribution kernels.
We write
$$
\widetilde R_j\sigma=\sigma_j.
$$

In the applications below, \(\widetilde R_j\) is applied only to functions which
are even in the corresponding variable. In that case the parity implies
\(a_j(\sigma)=0\), and hence
\[
R_j\circ
\left(
\operatorname{p.v.}\,\,\frac{\sigma(\cdot)}{|\cdot|^l}
\right)
=
\operatorname{p.v.}\,\,\frac{\widetilde R_j\sigma(\cdot)}{|\cdot|^l}.
\]
The vector-valued restricted Riesz transform on \(S^{l-1}\) is defined by
\begin{eqnarray}\label{def:widetilder}
	\widetilde R_{S^{l-1}}f
	=
	(\widetilde R_1f,\ldots,\widetilde R_lf).
\end{eqnarray}

We shall use the preceding one-parameter identity in the product setting only
after the parity decomposition \eqref{7}. More precisely, the Euclidean Riesz
transform is applied only in those variables in which the corresponding parity
component is even, while the odd variables are left unchanged. Hence the
following identities hold in
\({\mathcal S}'(\mathbb R^n\times\mathbb R^m)\):
\begin{eqnarray}\label{transfor:rieszproduct}
	\left\{\begin{aligned}
		({\mathfrak R}_{\R^n}\otimes Id_{\R^m})
		\left(
		\operatorname{p.v.}\,\frac{\Omega_{e,o}(x',y')}{|x|^n|y|^m}
		\right)
		&=
		\operatorname{p.v.}\,\frac{\omega_{e,o}(x',y')}{|x|^n|y|^m},\\
		(Id_{\R^n}\otimes{\mathfrak R}_{\R^m})
		\left(
		\operatorname{p.v.}\,\frac{\Omega_{o,e}(x',y')}{|x|^n|y|^m}
		\right)
		&=
		\operatorname{p.v.}\,\frac{\omega_{o,e}(x',y')}{|x|^n|y|^m},\\
		({\mathfrak R}_{\R^n}\otimes{\mathfrak R}_{\R^m})
		\left(
		\operatorname{p.v.}\,\frac{\Omega_{e,e}(x',y')}{|x|^n|y|^m}
		\right)
		&=
		\operatorname{p.v.}\,\frac{\omega_{e,e}(x',y')}{|x|^n|y|^m}.
	\end{aligned}
	\right.
\end{eqnarray}
Moreover, in the even-even case we shall also use the corresponding partial
identities
\begin{eqnarray}\label{transfor:rieszpartial}
	\left\{\begin{aligned}
		({\mathfrak R}_{\R^n}\otimes Id_{\R^m})
		\left(
		\operatorname{p.v.}\,\frac{\Omega_{e,e}(x',y')}{|x|^n|y|^m}
		\right)
		&=
		\operatorname{p.v.}\,\frac{\Omega_{e,e}'(x',y')}{|x|^n|y|^m},\\
		(Id_{\R^n}\otimes{\mathfrak R}_{\R^m})
		\left(
		\operatorname{p.v.}\,\frac{\Omega_{e,e}(x',y')}{|x|^n|y|^m}
		\right)
		&=
		\operatorname{p.v.}\,\frac{\Omega_{e,e}''(x',y')}{|x|^n|y|^m}.
	\end{aligned}
	\right.
\end{eqnarray}
Consequently,
\[
(Id_{\R^n}\otimes{\mathfrak R}_{\R^m})
\left(
\operatorname{p.v.}\,\frac{\Omega_{e,e}'(x',y')}{|x|^n|y|^m}
\right)
=
\operatorname{p.v.}\,\frac{\omega_{e,e}(x',y')}{|x|^n|y|^m},
\]
and
\[
({\mathfrak R}_{\R^n}\otimes Id_{\R^m})
\left(
\operatorname{p.v.}\,\frac{\Omega_{e,e}''(x',y')}{|x|^n|y|^m}
\right)
=
\operatorname{p.v.}\,\frac{\omega_{e,e}(x',y')}{|x|^n|y|^m}.
\]

No additional \(\delta_0\)-terms appear in these identities, because each Riesz
transform is applied only in a variable with respect to which the corresponding
spherical function is even.
We shall often use these product distribution identities below to identify the
principal singular parts of the kernels.

	Thus, the following lemma is a corollary of Theorem \ref{thm3}.

	\begin{Lemma}\label{lemma7}
		Let
	$\Omega\in H^1(S^{n-1}\times S^{m-1})
		\wedge \eqref{cancellation}$.
		Then for every \(\alpha,\beta\in\{o,e\}\),
	$\omega_{\alpha,\beta}\in L^1(S^{n-1}\times S^{m-1})$ and
	$$
	\sum_{\alpha,\beta\in\{o,e\}}\|\omega_{\alpha,\beta}\|_1\le C_{n,m}\|\Omega\|_{H^1}.
	$$
Moreover, the partially transformed functions in the even-even case satisfy $\Omega_{e,e}'$, $\Omega_{e,e}''\in L^1(S^{n-1}\times S^{m-1})$, and
$$
\|\Omega_{e,e}'\|_1+\|\Omega_{e,e}''\|_1\le C_{n,m}\|\Omega\|_{H^1}.
$$
	\end{Lemma}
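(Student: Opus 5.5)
The proof reduces to Theorem \ref{thm3}, specifically the equivalence
\[
\|f\|_{H^1}\cong\|(\widetilde R_{S^{n-1}}'\otimes\widetilde R_{S^{m-1}}')f\|_1,
\qquad f\in L_0^1(S^{n-1}\times S^{m-1}),
\]
where the prime means augmentation by \(Id\). Thus \((\widetilde R_{S^{n-1}}'\otimes\widetilde R_{S^{m-1}}')f\) collects the four blocks \(f\), \((\widetilde R_{S^{n-1}}\otimes Id)f\), \((Id\otimes\widetilde R_{S^{m-1}})f\) and \((\widetilde R_{S^{n-1}}\otimes\widetilde R_{S^{m-1}})f\). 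The plan is to control each \(\omega_{\alpha,\beta}\), and also \(\Omega_{e,e}'\) and \(\Omega_{e,e}''\), by one block of this tensor applied to the relevant parity component \(\Omega_{\alpha,\beta}\). Then bound \(\|\Omega_{\alpha,\beta}\|_{H^1}\) by \(\|\Omega\|_{H^1}\).

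\textbf{Step 1: the parity components inherit the \(H^1\) bound.} The reflections \(x'\mapsto -x'\) and \(y'\mapsto -y'\) are isometries of \(S^{n-1}\) and \(S^{m-1}\). They commute with the Laplace--Beltrami operators, hence with \(P_t^{S^{n-1}}\otimes P_s^{S^{m-1}}\). So \(P^+(\Omega(\pm\cdot,\pm\cdot))(x',y')=P^+(\Omega)(\pm x',\pm y')\), which gives \(\|\Omega(\pm\cdot,\pm\cdot)\|_{H^1}=\|\Omega\|_{H^1}\). By \eqref{7} and sublinearity of \(P^+\), \(\|\Omega_{\alpha,\beta}\|_{H^1}\le\|\Omega\|_{H^1}\). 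Each \(\Omega_{\alpha,\beta}\) also satisfies \eqref{cancellation}, since it is an average of reflections of \(\Omega\), and reflections preserve the slice integrals. In particular \(\Omega_{\alpha,\beta}\in L^1_0\), so Theorem \ref{thm3} applies to it.

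\textbf{Step 2: identify each transformed function as a block.} By definition \eqref{def:omegaab}:
\begin{itemize}
\item \(\omega_{o,o}=\Omega_{o,o}\) is the \(Id\otimes Id\) block, and \(\|\Omega_{o,o}\|_1\le\|\Omega_{o,o}\|_{H^1}\), since \(|f|\le P^+f\) a.e. by letting \(t,s\to0\).
\item \(\omega_{e,o}=(\widetilde R_{S^{n-1}}\otimes Id)\Omega_{e,o}\) and \(\omega_{o,e}=(Id\otimes\widetilde R_{S^{m-1}})\Omega_{o,e}\) are mixed blocks.
\item \(\omega_{e,e}\) is the full tensor block.
\item \(\Omega_{e,e}'\) and \(\Omega_{e,e}''\) in \eqref{def:omega} are the two mixed blocks applied to \(\Omega_{e,e}\).
\end{itemize}
Since the \(L^1\) norm of the whole augmented tensor dominates that of each block, Theorem \ref{thm3} gives \(\|\text{block}(\Omega_{\alpha,\beta})\|_1\le C_{n,m}\|\Omega_{\alpha,\beta}\|_{H^1}\le C_{n,m}\|\Omega\|_{H^1}\). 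Summing over the finitely many cases proves both inequalities of the lemma.

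\textbf{Main obstacle: meaning of the transforms for rough \(\Omega\).} \(\widetilde R_j\) was defined via \eqref{def:tildeR:dist} only on smooth mean-zero \(\sigma\), while \(\Omega\) is merely in \(H^1\). I would define \(\omega_{\alpha,\beta}\) by density. Approximate \(\Omega_{\alpha,\beta}\) in \(H^1\) by the smoothings \((P_\varepsilon^{S^{n-1}}\otimes P_\varepsilon^{S^{m-1}})\Omega_{\alpha,\beta}\), which are smooth, preserve parity and satisfy \eqref{cancellation}. The a priori bound from Theorem \ref{thm3} shows the images are Cauchy in \(L^1\), so the limit exists and satisfies the same estimate. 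The limit is consistent with the distributional identities \eqref{transfor:rieszproduct} and \eqref{transfor:rieszpartial}: both sides converge in \(\mathcal S'\), because \(L^1\) convergence of the angular parts under separate cancellation gives convergence of the principal value kernels. This density argument is the step needing care. The norm inequality itself is immediate from Theorem \ref{thm3}.
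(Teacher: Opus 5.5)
Your argument is the same as the paper's. Reflections commute with the product Poisson semigroup, so the parity projections preserve separate cancellation and contract the $H^1$ norm. Each of $\omega_{\alpha,\beta}$, $\Omega_{e,e}'$, $\Omega_{e,e}''$ is then one block of $(\widetilde R_{S^{n-1}}'\otimes\widetilde R_{S^{m-1}}')\Omega_{\alpha,\beta}$, which is controlled by Theorem \ref{thm3}.

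Your extra density paragraph does not appear in the paper's proof of this lemma. The paper regularizes elsewhere with smooth rotation averages, which contract the $H^1$ norm and are covariant under $\widetilde R_{S^{l-1}}$. If you keep the Poisson smoothings instead, your Cauchy argument needs $(P_\varepsilon\otimes P_\varepsilon)\Omega_{\alpha,\beta}\to\Omega_{\alpha,\beta}$ in the $H^1$ norm. You assert this convergence but do not justify it.
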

	
	{\bf Proof:}
	Although Theorem \ref{thm3} is proved later in Sections
	\ref{sec:h1one} and \ref{sec:product}, its proof is independent of the
	present section. Antipodal reflections commute with the product Poisson
	semigroup. Hence the parity projections in \eqref{7} preserve separate
	cancellation and satisfy
	\[
	\|\Omega_{\alpha,\beta}\|_{H^1}
	\le
	\|\Omega\|_{H^1},
	\qquad
	\alpha,\beta\in\{o,e\}.
	\]
	It follows from Theorem \ref{thm3} that
	\[
	\sum_{\alpha,\beta\in\{o,e\}}
	\left\|
	(\widetilde R_{S^{n-1}}'
	\otimes
	\widetilde R_{S^{m-1}}')
	\Omega_{\alpha,\beta}
	\right\|_1
	\le
	C_{n,m}\|\Omega\|_{H^1}.
	\]
	By their definitions, the functions
	\(\omega_{\alpha,\beta}\) occur among the corresponding identity,
	single-transform, or double-transform blocks. For
	\(\Omega_{e,e}\), the two single-transform blocks are
	\(\Omega_{e,e}'\) and \(\Omega_{e,e}''\). The asserted estimates now
	follow.
	\endpf


	{\bf Proof of the  boundedness of $T_{\Omega}$:}  It is enough to prove the estimate first for smooth $\Omega$ satisfying the cancellation condition. The general case follows by approximation.

	Noting that  ${\mathfrak R}_{\R^l}\cdot{\mathfrak R}_{\R^l}=-Id$,  $l=n,m$, and $({\mathfrak R}_{\R^n}\otimes {\mathfrak R}_{\R^m})\cdot({\mathfrak  R}_{\R^n}\otimes {\mathfrak R}_{\R^m})=Id$, we obtain, by \eqref{operator}, \eqref{omegade}, and \eqref{7},
	\begin{eqnarray*}
		T_{\Omega}(f)={\mathcal T}_{o,o}(f)-{\mathfrak R}_{\R^n}\cdot{\mathcal T}_{e,o}(f)-{\mathfrak R}_{\R^m}\cdot{\mathcal T}_{o,e}(f)+({\mathfrak R}_{\R^n}\otimes{\mathfrak R}_{\R^m})\cdot{\mathcal T}_{e,e}(f)
	\end{eqnarray*}
Here
$$
{\mathcal T}_{\alpha,\beta}(f)(x,y)
=
\operatorname{p.v.}\,\iint_{\mathbb R^n\times\mathbb R^m}
{\mathcal K}_{\alpha,\beta}(u,v)
f(x-u,y-v)\,du\,dv,
\qquad
\alpha,\beta\in\{o,e\}.
$$
Here and below, the $L^p$-norm of vector-valued or tensor-valued functions is understood with respect to the Euclidean norm of the fiber.

By the \(L^p\)-boundedness of the Riesz transforms, it is enough to prove the
\(L^p(\mathbb R^n\times\mathbb R^m)\)-boundedness of all
\({\mathcal T}_{\alpha,\beta}\), \(1<p<\infty\).
	{{By the definition of \(\omega_{\alpha,\beta}\), the function \(\omega_{\alpha,\beta}\) is odd in both variables. Indeed, if one of the original parity components is already odd in a variable, no restricted Riesz transform is applied in that variable; if it is even, the restricted Riesz transform changes its parity from even to odd. Hence \({\mathcal K}_{\alpha,\beta}\) is odd in both \(u\) and \(v\), and is homogeneous of degree \(-n\) in \(u\) and degree \(-m\) in \(v\). Therefore, by the rotation method,}	}
		\[
		{\mathcal T}_{\alpha,\beta}(f)(x,y)
		=
		\frac14
		\iint_{S^{n-1}\times S^{m-1}}
		\omega_{\alpha,\beta}(u',v')
		\big(H_{u'}^{\mathbb R^n}\otimes H_{v'}^{\mathbb R^m}\big)f(x,y)
		\,du'\,dv',
		\]
		where
		\[
		H_{u'}^{\mathbb R^n}f(x,y)
		=
		\operatorname{p.v.}\,\int_{\mathbb R}
		f(x-tu',y)\,\frac{dt}{t},
		\]
		and
		\[
		H_{v'}^{\mathbb R^m}f(x,y)
		=
		\operatorname{p.v.}\,\int_{\mathbb R}
		f(x,y-sv')\,\frac{ds}{s}
		\]
		are directional Hilbert transforms.

		It follows from Minkowski's inequality and the uniform \(L^p\)-boundedness of directional Hilbert transforms that \[ \|{\mathcal T}_{\alpha,\beta}(f)\|_{L^p(\mathbb R^n\times\mathbb R^m)} \le C_p \|\omega_{\alpha,\beta}\|_{L^1(S^{n-1}\times S^{m-1})} \|f\|_{L^p(\mathbb R^n\times\mathbb R^m)}. \] Since \(\omega_{\alpha,\beta}\in L^1(S^{n-1}\times S^{m-1})\) by Lemma \ref{lemma7},  each \({\mathcal T}_{\alpha,\beta}\) is bounded on \(L^p(\mathbb R^n\times\mathbb R^m)\).
	 Consequently, \(T_\Omega\) is bounded on \(L^p(\mathbb R^n\times\mathbb R^m)\): $$
	 \|T_{\Omega}(f)\|_{L^p(\R^n\times\R^m)}\le C_{p,n,m}\|\Omega\|_{H^1(S^{n-1}\times S^{m-1})}\|f\|_{L^p(\R^n\times\R^m)}.
	 $$

	{\bf Proof of the boundedness of $T_{\Omega}^{*}$:}

	It is easy to check that
	\ba\label{tomega*}
	T_{\Omega}^{*}(f)(x,y)
	&\le&
	\sum_{\alpha,\beta\in\{o,e\}}
	\bigg(
	T_{*}^{\alpha,\beta}(f)(x,y)
	+
	T_{*}^{\alpha,\beta,1}(f)(x,y)
	+
	T_{*}^{\alpha,\beta,2}(f)(x,y)
	\nonumber\\
	&&\qquad\qquad
	+
	16
	\Liint_{S^{n-1}\times S^{m-1}}
	|\Omega_{\alpha,\beta}(u',v')|
	M_{u',v'}(f)(x,y)
	\,du'dv'
	\bigg),
	\ea
	where
	\ba\label{13}
	\begin{aligned}
		M_{u',v'}(f)(x,y)
		&=
		\sup_{t,s>0}
		\frac{1}{ts}
		\Liint_{(0,t)\times(0,s)}
		|f(x-\delta u',y-\eta v')|
		\,d\delta d\eta,
		\\
		M_{u'}^{\R^n}(f)(x,y)
		&=
		\sup_{t>0}
		\frac{1}{t}
		\int_{0}^{t}
		|f(x-\delta u',y)|
		\,d\delta,
		\\
		M_{v'}^{\R^m}(f)(x,y)
		&=
		\sup_{s>0}
		\frac{1}{s}
		\int_{0}^{s}
		|f(x,y-\eta v')|
		\,d\eta .
	\end{aligned}
	\ea
	Moreover,
	\begin{eqnarray*}
		\begin{aligned}
			T_{*}^{\alpha,\beta}(f)(x,y)
			&=&
			\sup_{t,s>0}
			\left|
			\Liint_{\R^n\times\R^m}
			L_{t,s}^{\alpha,\beta}(u,v)
			f(x-u,y-v)
			\,dudv
			\right|,
			\\
			T_{*}^{\alpha,\beta,1}(f)(x,y)
			&=&
			\sup_{t,s>0}
			\left|
			\Liint_{\{|u|<t\}\times\R^m}
			L_{t,s}^{\alpha,\beta}(u,v)
			f(x-u,y-v)
			\,dudv
			\right|,
			\\
			T_{*}^{\alpha,\beta,2}(f)(x,y)
			&=&
			\sup_{t,s>0}
			\left|
			\Liint_{\R^n\times\{|v|<s\}}
			L_{t,s}^{\alpha,\beta}(u,v)
			f(x-u,y-v)
			\,dudv
			\right|.
		\end{aligned}
	\end{eqnarray*}
	Here
	\ba\label{operatorL}
	L_{t,s}^{\alpha,\beta}(x,y)
	=
	t^{-n}s^{-m}
	L^{\alpha,\beta}
	\left(
	\frac{x}{t},
	\frac{y}{s}
	\right),
	\ea
	where
	\[
	L^{\alpha,\beta}(x,y)
	=
	\Omega_{\alpha,\beta}(x',y')
	|x|^{-n}|y|^{-m}
	\widetilde{\chi}(|x|)
	\widetilde{\chi}(|y|).
	\]
	The cutoff function \(\widetilde{\chi}\) satisfies
	\ba\label{tildechi}
	\widetilde{\chi}\in C^{\infty}(0,+\infty),
	\qquad
	\widetilde{\chi}(r)=0\quad\text{for }r\le \frac12,
	\qquad
	\widetilde{\chi}(r)=1\quad\text{for }r\ge1,
	\qquad
	0\le \widetilde{\chi}\le1 .
	\ea

	We shall repeatedly use the following elementary cutoff estimates. Let
	\(l\ge2\), \(r>0\), and \(X,Z\in\mathbb R^l\). If \(K_l\) denotes the Riesz
	kernel in \(\mathbb R^l\), then the following estimates hold.

	First, if
	\[
	|X|\le \frac r4,
	\qquad
	\widetilde\chi\left(\frac{|Z|}{r}\right)\neq0,
	\]
	then \(|Z|\ge r/2\), \(|X-Z|\sim |Z|\), and
	\begin{equation*}\tag{E1}\label{est:E1}
		|K_l(X-Z)|
		\le
		C_l |Z|^{-l}.
	\end{equation*}

	Second, if
	\[
	|X|\ge 2r,
	\qquad
	1-\widetilde\chi\left(\frac{|Z|}{r}\right)\neq0,
	\]
	then \(|Z|\le r\), \(|X-Z|\sim |X|\), and
	\begin{equation*}\tag{E2}\label{est:E2}
		|K_l(X-Z)-K_l(X)|
		\le
		C_l\frac{|Z|}{|X|^{l+1}}.
	\end{equation*}

	Third, by the smoothness of \(\widetilde\chi\),
	\begin{equation*}\tag{E3}\label{est:E3}
		\left|
		\widetilde\chi\left(\frac{|Z|}{r}\right)
		-
		\widetilde\chi\left(\frac{|X|}{r}\right)
		\right|
		\le
		C\frac{|X-Z|}{r}.
	\end{equation*}

	Consequently, if
	\[
	\frac r4<|X|<2r,
	\qquad
	\frac r8\le |Z|\le 3r,
	\]
	then
	\begin{equation*}\tag{E4}\label{est:E4}
		\left|
		K_l(X-Z)
		\left[
		\widetilde\chi\left(\frac{|Z|}{r}\right)
		-
		\widetilde\chi\left(\frac{|X|}{r}\right)
		\right]
		\frac1{|Z|^l}
		\right|
		\le
		C_l
		\frac1{r^{l+1}|X-Z|^{l-1}}.
	\end{equation*}
	The portions with \(|Z|<r/8\) or \(|Z|>3r\) in the annular decompositions are
	smoother and will be dominated by the radial maximal kernels
	\(\Phi_r^{\mathbb R^l}\). These estimates will be used below with
	\((X,Z,r,l)=(x,z,t,n)\) or \((X,Z,r,l)=(y,w,s,m)\).

	We shall only deal with \(T_{*}^{\alpha,\beta}(f)\) below. The terms
	\(T_{*}^{\alpha,\beta,j}(f)\), \(j=1,2\), are local error terms and will be
	estimated at the end. Define
	\[
	{\mathcal T}_{*}^{\alpha,\beta}(f)(x,y)
	=
	\sup_{t,s>0}
	\left|
	{\mathcal L}_{t,s}^{\alpha,\beta}*f(x,y)
	\right|,
	\]
	where
	\[
	{\mathcal L}_{t,s}^{\alpha,\beta}(x,y)
	=
	t^{-n}s^{-m}
	{\mathcal L}^{\alpha,\beta}
	\left(
	\frac{x}{t},
	\frac{y}{s}
	\right),
	\]
	and
	\[
	{\mathcal L}^{\alpha,\beta}
	=
	\left(
	\mathfrak R_{\R^n}^{\alpha}
	\otimes
	\mathfrak R_{\R^m}^{\beta}
	\right)
	(L^{\alpha,\beta}).
	\]
	Here
	\[
	\mathfrak R_{\R^l}^{e}
	=
	\mathfrak R_{\R^l},
	\qquad
	\mathfrak R_{\R^l}^{o}
	=
	Id,
	\qquad l=n,m.
	\]
	Since
	\[
	\mathfrak R_{\R^l}\cdot \mathfrak R_{\R^l}
	=
	-Id,
	\qquad l=n,m,
	\]
	we have, up to harmless signs,
	\[
	L_{t,s}^{\alpha,\beta}*f
	=
	{\mathcal L}_{t,s}^{\alpha,\beta}
	*
	f^{\alpha,\beta},
	\]
	where
	\[
	f^{\alpha,\beta}
	=
	\left(
	\mathfrak R_{\R^n}^{\alpha}
	\otimes
	\mathfrak R_{\R^m}^{\beta}
	\right)f .
	\]
	Here the convolution on the right-hand side is understood with the natural
	vector or tensor contraction. Consequently,
	\[
	T_{*}^{\alpha,\beta}(f)(x,y)
	=
	{\mathcal T}_{*}^{\alpha,\beta}
	(f^{\alpha,\beta})(x,y).
	\]
	Therefore, by the \(L^p\)-boundedness of the Riesz transforms, it remains to
	prove the \(L^p\)-boundedness of
	\({\mathcal T}_{*}^{\alpha,\beta}\). In what follows we estimate
	\({\mathcal T}_{*}^{\alpha,\beta}(f)\).

	{\bf Case I:} \((\alpha,\beta)=(o,o)\).

	Since, by \eqref{def:omegaab},
	 \(\omega_{o,o}=\Omega_{o,o}\), by the rotation method we have
	\ba\label{17}
	{\mathcal T}_{*}^{o,o}(f)(x,y)
	&\le&
	\frac14
	\Liint_{S^{n-1}\times S^{m-1}}
	|\omega_{o,o}(u',v')|
	\widetilde H_{u',v'}^{*,0}(f)(x,y)
	\,du'dv'.
	\ea
	Here and in what follows, we introduce the auxiliary maximal operators defined below:
	\ba\label{18}
	\begin{aligned}
		H_{u',v'}^{*,0}(f)(x,y)
		&=
		\sup_{t,s>0}
		\left|
		H_{u',t}^{\R^n}
		\circ
		H_{v',s}^{\R^m}(f)(x,y)
		\right|,
		\\
		\widetilde H_{u',v'}^{*,0}(f)(x,y)
		&=
		\sup_{t,s>0}
		\left|
		\widetilde H_{u',t}^{\R^n}
		\circ
		\widetilde H_{v',s}^{\R^m}(f)(x,y)
		\right|,
		\\
		\widetilde H_{u',v'}^{*,1}(f)(x,y)
		&=
		\sup_{t,s>0}
		\left|
		\widetilde H_{u',t}^{\R^n}
		\circ
		H_{v',s}^{\R^m}(f)(x,y)
		\right|,
		\\
		\widetilde H_{u',v'}^{*,2}(f)(x,y)
		&=
		\sup_{t,s>0}
		\left|
		H_{u',t}^{\R^n}
		\circ
		\widetilde H_{v',s}^{\R^m}(f)(x,y)
		\right|.
		\end{aligned}
		\ea

	By iterating the one-parameter Cotlar inequality and using the
	commutation of operators acting in different ambient variables, the four
	maximal operators in \eqref{18} are bounded on
	\(L^p(\R^n\times\R^m)\), \(1<p<\infty\), uniformly in
	\(u'\in S^{n-1}\) and \(v'\in S^{m-1}\).

	The one-parameter directional operators are defined by
	\begin{eqnarray*}
		\widetilde H_{u',t}^{\R^n}(f)(x,y)
		=
		\int_{\R}
		f(x-\delta u',y)
		\widetilde\chi\left(\frac{|\delta|}{t}\right)
		\frac{d\delta}{\delta},\\
		H_{u',t}^{\R^n}(f)(x,y)
		=
		\int_{\R}
		f(x-\delta u',y)
		\chi\left(\frac{|\delta|}{t}\right)
		\frac{d\delta}{\delta},
		\\
		\widetilde H_{v',s}^{\R^m}(f)(x,y)
		=
		\int_{\R}
		f(x,y-\eta v')
		\widetilde\chi\left(\frac{|\eta|}{s}\right)
		\frac{d\eta}{\eta},\\
		H_{v',s}^{\R^m}(f)(x,y)
		=
		\int_{\R}
		f(x,y-\eta v')
		\chi\left(\frac{|\eta|}{s}\right)
		\frac{d\eta}{\eta}.
	\end{eqnarray*}
	Here \(\widetilde\chi\) is defined in \eqref{tildechi}, and
	\(\chi=\chi_{[1,+\infty)}\).

	{\bf Case II:} \((\alpha,\beta)=(o,e)\)
	and \((\alpha,\beta)=(e,o)\).
	We recall the definition for Riesz kernel $K_l(z)$ from \eqref{def:rieszkernel}.

	We first estimate \({\mathcal T}_{*}^{o,e}\).
In this case the main transformed kernel is
\[
{\mathcal K}_{o,e}(x,y)
=
\omega_{o,e}(x',y')|x|^{-n}|y|^{-m}.
\]
The error terms below still contain the original spherical function
\(\Omega_{o,e}\).

	By the definition of \({\mathcal L}_{t,s}^{o,e}\), we have
	\begin{eqnarray*}
		{\mathcal L}_{t,s}^{o,e}(x,y)
		&=&
		\widetilde{\chi}\left(\frac{|x|}{t}\right)
		\operatorname{p.v.}\,\int_{\R^m}
	K_{m}(y-w)
		\frac{\Omega_{o,e}(x',w')}{|x|^n|w|^m}
		\widetilde{\chi}\left(\frac{|w|}{s}\right)\,dw
		\\
		&=&
		\widetilde{\chi}\left(\frac{|x|}{t}\right)
		\bigg\{
		\chi_{\{|y|>2s\}}(y)
		\left(
		{\mathcal K}_{o,e}(x,y)
		+
		{\mathcal L}_{t,s}^{o,e,1}(x,y)
		\right)
		\\
		&&\qquad\qquad\qquad
		+
		\chi_{\{|y|<s/4\}}(y)
		{\mathcal L}_{t,s}^{o,e,2}(x,y)
		\\
		&&\qquad\qquad\quad
		+
		\chi_{\{s/4<|y|<2s\}}(y)
		\left(
		\widetilde{\chi}\left(\frac{|y|}{s}\right)
		{\mathcal K}_{o,e}(x,y)
		+
		{\mathcal L}_{t,s}^{o,e,3}(x,y)
		\right)
		\bigg\},
	\end{eqnarray*}
	where
	\begin{eqnarray*}
		{\mathcal L}_{t,s}^{o,e,1}(x,y)
		&=&
		\int_{\R^m}
		K_m(y-w)
		\left(
		\widetilde{\chi}\left(\frac{|w|}{s}\right)-1
		\right)
		\frac{\Omega_{o,e}(x',w')}{|x|^n|w|^m}
		\,dw,
		\\
		{\mathcal L}_{t,s}^{o,e,2}(x,y)
		&=&
		\int_{\R^m}
		K_m(y-w)
		\widetilde{\chi}\left(\frac{|w|}{s}\right)
		\frac{\Omega_{o,e}(x',w')}{|x|^n|w|^m}
		\,dw,
		\\
		{\mathcal L}_{t,s}^{o,e,3}(x,y)
		&=&
		\int_{\R^m}
	K_m(y-w)
		\frac{\Omega_{o,e}(x',w')}{|x|^n|w|^m}
		\left(
		\widetilde{\chi}\left(\frac{|w|}{s}\right)
		-
		\widetilde{\chi}\left(\frac{|y|}{s}\right)
		\right)
		\,dw.
	\end{eqnarray*}

	The terms containing \({\mathcal K}_{o,e}\) are controlled by the rotation
	method, while the remaining terms are treated as error terms. Thus
	\ba\label{19}
	{\mathcal T}_{*}^{o,e}(f)(x,y)
	&\le&
	\sup_{s,t>0}
	\frac14
	\left|
	\Liint_{S^{n-1}\times S^{m-1}}
	\omega_{o,e}(u',v')
	\left(
	\widetilde H_{u',t}^{\R^n}
	\circ
	H_{v',2s}^{\R^m}
	\right)f(x,y)
	\,du'dv'
	\right|
	\nonumber\\
	&&+
	\sup_{s,t>0}
	\frac12
	\left|
	\int_{S^{n-1}}
	\int_{|v|\ge 2s}
	{\mathcal L}_{t,s}^{o,e,1}(u',v)
	\widetilde H_{u',t}^{\R^n}f(x,y-v)
	\,dvdu'
	\right|
	\nonumber\\
	&&+
	\sup_{s,t>0}
	\frac12
	\left|
	\int_{S^{n-1}}
	\int_{|v|\le s/4}
	{\mathcal L}_{t,s}^{o,e,2}(u',v)
	\widetilde H_{u',t}^{\R^n}f(x,y-v)
	\,dvdu'
	\right|
	\nonumber\\
	&&+
	\sup_{s,t>0}
	\frac12
	\left|
	\Liint_{S^{n-1}\times S^{m-1}}
	\omega_{o,e}(u',v')
	\int_{s/4}^{2s}
	\widetilde H_{u',t}^{\R^n}f(x,y-\eta v')
	\widetilde{\chi}\left(\frac{\eta}{s}\right)
	\frac{d\eta}{\eta}
	\,dv'du'
	\right|
	\nonumber\\
	&&+
	\sup_{s,t>0}
	\frac12
	\left|
	\int_{S^{n-1}}
	\int_{s/4\le |v|\le 2s}
	{\mathcal L}_{t,s}^{o,e,3}(u',v)
	\widetilde H_{u',t}^{\R^n}f(x,y-v)
	\,dvdu'
	\right|
	\nonumber\\
	&:=&
	\sum_{j=0}^{4}
	{\mathcal T}_{*,j}^{o,e}(f)(x,y).
	\ea

	For the term ${\mathcal T}_{*,0}^{o,e}(f)$, we have
	\ba\label{20}
	{\mathcal T}_{*,0}^{o,e}(f)(x,y)
	\le
	C_{n,m}
	\Liint_{S^{n-1}\times S^{m-1}}
	|\omega_{o,e}(u',v')|
	\widetilde H_{u',v'}^{*,1}(f)(x,y)
	\,du'dv',
	\ea
	where \(\widetilde H_{u',v'}^{*,1}\) is defined in \eqref{18}.

	We shall also use the following one-parameter maximal directional Hilbert
	transforms:
	\begin{eqnarray}\label{22}
		\widetilde H_{u'}^{*,\R^n}(f)(x,y)
		&=&
		\sup_{t>0}
		\left|
		\widetilde H_{u',t}^{\R^n}(f)(x,y)
		\right|,
		\nonumber\\
		H_{u'}^{*,\R^n}(f)(x,y)
		&=&
		\sup_{t>0}
		\left|
		H_{u',t}^{\R^n}(f)(x,y)
		\right|,
		\nonumber\\
		\widetilde H_{v'}^{*,\R^m}(f)(x,y)
		&=&
		\sup_{s>0}
		\left|
		\widetilde H_{v',s}^{\R^m}(f)(x,y)
		\right|,
		\\
		H_{v'}^{*,\R^m}(f)(x,y)
		&=&
		\sup_{s>0}
		\left|
		H_{v',s}^{\R^m}(f)(x,y)
		\right|.
		\nonumber
	\end{eqnarray}
	Moreover, define
	$$
		\Phi^{\R^m}(y)
	=
	|y|^{-m+1}(1+|y|)^{-2},
	\qquad
	\Phi^{\R^n}(x)
	=
	|x|^{-n+1}(1+|x|)^{-2}.
	$$
	For $r>0$, set
	$$
	\Phi_{r}^{\R^m}(y)=r^{-m}\Phi^{\R^m}(\frac{y}{r}),\quad \Phi_{r}^{\R^n}(x)=r^{-n}\Phi^{\R^n}(\frac{x}{r}).
	$$
	Then define
	\ba\label{23}
	\begin{aligned}
		\Phi_{\R^m}^{*}(f)(x,y)
		&=
		\sup_{r>0}
		\left|
		\int_{\R^m}
		\Phi_{r}^{\R^m}(v)
		|f(x,y-v)|\,dv
		\right|,
		\\
		\Phi_{\R^n}^{*}(f)(x,y)
		&=
		\sup_{r>0}
		\left|
		\int_{\R^n}
		\Phi_{r}^{\R^n}(u)
		|f(x-u,y)|\,du
		\right|.
	\end{aligned}
	\ea

We shall also use translated versions of these maximal operators. Let
\[
A_n
=
\{u\in\R^n:1/8\le |u|\le 3\},
\qquad
A_m
=
\{v\in\R^m:1/8\le |v|\le 3\}.
\]
For \(u\in A_n\), define
\begin{eqnarray}\label{def:tmon}
\Phi_{u,\R^n}^{*}(f)(x,y)
:=
\sup_{r>0}
\int_{\R^n}
\Phi_{r}^{\R^n}(z)
|f(x-r u-z,y)|\,dz.
\end{eqnarray}
Similarly, for \(v\in A_m\), define
\begin{eqnarray}\label{def:tmom}
\Phi_{v,\R^m}^{*}(f)(x,y)
:=
\sup_{r>0}
\int_{\R^m}
\Phi_{r}^{\R^m}(w)
|f(x,y-r v-w)|\,dw.
\end{eqnarray}
These operators are uniformly bounded on $L^p$, $1<p<\infty$.
To see this in dimension $l$, fix $|u|\le3$ and put
\[
\mathcal A_{k,u}g(x)
=\sup_{r>0}\frac{1}{|B(0,2^{-k}r)|}
\int_{B(x-ru,2^{-k}r)}|g(z)|\,dz,
\qquad k\ge0.
\]
An annular decomposition of $\Phi^{\mathbb R^l}$ gives
\[
\Phi_{u,\mathbb R^l}^*g
\le C M g+C\sum_{k\ge0}2^{-k}\mathcal A_{k,u}g,
\]
where $M$ is the Hardy--Littlewood maximal operator.
Each averaging ball in $\mathcal A_{k,u}$ lies in a rectangle centered
at $x$, with length $O(r)$ in the direction of $u$ and transverse
width $O(2^{-k}r)$. The volume ratio is at most $C2^k$.
The maximal operator over dilates of this fixed rectangle is weak
$(1,1)$, uniformly under linear changes of coordinates. Hence
$\|\mathcal A_{k,u}\|_{L^1\to L^{1,\infty}}\le C2^k$.
Since its $L^\infty$ norm is at most $1$, interpolation gives
$\|\mathcal A_{k,u}\|_{L^p\to L^p}\le C_p2^{k/p}$.
Summing $2^{-k(1-1/p)}$ proves the asserted uniform bounds.

Now we estimate \({\mathcal T}_{*,1}^{o,e}(f)\). Since \(|v|\ge 2s\), and
since
$
1-\widetilde\chi\left(\frac{|w|}{s}\right)\neq0
$ implies that
$|w|\le s,
$
we may use the cancellation of \(\Omega_{o,e}\) in the second variable and
\eqref{est:E2}, with \((X,Z,r,l)=(v,w,s,m)\). Thus, we have
	\begin{eqnarray*}
		\left|
		{\mathcal L}_{t,s}^{o,e,1}(u',v)
		\right|
		&=&
		\left|
		\int_{\R^m}
		\left(
		K_m(v-w)-K_m(v)
		\right)
		\left(
		\widetilde{\chi}\left(\frac{|w|}{s}\right)-1
		\right)
		\frac{\Omega_{o,e}(u',w')}{|w|^m}
		\,dw
		\right|
		\\
		&\le&
		C_m
		\int_{|w|\le s}
		\frac{1}{|v|^{m+1}}
		\frac{|\Omega_{o,e}(u',w')|}{|w|^{m-1}}
		\,dw
		\\
		&\le&
		C_m
		\Phi_{s}^{\R^m}(v)
		\int_{S^{m-1}}
		|\Omega_{o,e}(u',w')|
		\,dw',
	\end{eqnarray*}
	as when $|v|\ge 2s$,
	$$
	\Phi_{s}^{\R^m}(v)=s^{-m}\Phi^{\R^m}(\frac{v}{s})\sim \frac{s}{|v|^{m+1}}.
	$$
	Hence
	\ba\label{21}
	{\mathcal T}_{*,1}^{o,e}(f)(x,y)
	\le
	C_{n,m}
	\Liint_{S^{n-1}\times S^{m-1}}
	|\Omega_{o,e}(u',v')|
	\left(
	\Phi_{\R^m}^{*}\circ
	\widetilde H_{u'}^{*,\R^n}
	\right)(f)(x,y)
	\,du'dv'.
	\ea

For \({\mathcal T}_{*,2}^{o,e}(f)\), when \(|v|\le s/4\), the support
condition
$
\widetilde\chi\left(\frac{|w|}{s}\right)\neq0
$
implies \(|w|\ge s/2\). Hence by \eqref{est:E1}, with
\((X,Z,r,l)=(v,w,s,m)\), we have
$
|v-w|\sim |w|.
$
Therefore
\[
\begin{aligned}
	\left|
	{\mathcal L}_{t,s}^{o,e,2}(u',v)
	\right|
	&\le
	C_m
	\int_{|w|\ge s/2}
	\frac{|\Omega_{o,e}(u',w')|}{|w|^{2m}}
	\,dw  \\
	&\le
	C_m s^{-m}
	\int_{S^{m-1}}
	|\Omega_{o,e}(u',w')|
	\,dw'.
\end{aligned}
\]
Moreover,
\[
s^{-m}\chi_{\{|v|\le s/4\}}(v)
\lesssim
\Phi_s^{\mathbb R^m}(v).
\]
Consequently,
\begin{eqnarray}\label{24}
	{\mathcal T}_{*,2}^{o,e}(f)(x,y)
	&\le&
	C_{n,m}
	\Liint_{S^{n-1}\times S^{m-1}}
	|\Omega_{o,e}(u',v')|
	\left(
	\Phi_{\mathbb R^m}^{*}\circ
	\widetilde H_{u'}^{*,\mathbb R^n}
	\right)(f)(x,y)
	\,du'dv'.
\end{eqnarray}

	For ${\mathcal T}_{*,3}^{o,e}(f)$, it is obvious that
	\ba\label{25}
	{\mathcal T}_{*,3}^{o,e}(f)(x,y)
	\le
	C_{n,m}
	\Liint_{S^{n-1}\times S^{m-1}}
	|\omega_{o,e}(u',v')|
	\left(
	M_{v'}^{\R^m}\circ
	\widetilde H_{u'}^{*,\R^n}
	\right)(f)(x,y)
	\,du'dv',
	\ea
	where \(M_{v'}^{\R^m}\) is defined in \eqref{13}.

For ${\mathcal T}_{*,4}^{o,e}$, split the $w$-integration into
$|w|<s/8$, $s/8\le |w|\le3s$, and $|w|>3s$.
In the first region, subtract $K_m(v)$ using cancellation; in the
last region use $|v-w|\cong|w|$. For $s/4\le |v|\le2s$, these two
parts are bounded by
\[
C_m\Phi_s^{\mathbb R^m}(v)
\int_{S^{m-1}}|\Omega_{o,e}(u',\eta)|\,d\eta.
\]
For the annular part, \eqref{est:E4} gives
\[
C_m\int_{s/8\le |w|\le3s}
\frac{|\Omega_{o,e}(u',w')|}
{s^{m+1}|v-w|^{m-1}}\,dw
\le C_m\int_{A_m}|\Omega_{o,e}(u',z')|
\Phi_s^{\mathbb R^m}(v-sz)\,dz.
\]
Here we put $w=sz$ and used $|v-sz|\le5s$ on the region in question.
Convolving with $f$ and taking the supremum yields
\ba\label{26}
{\mathcal T}_{*,4}^{o,e}(f)(x,y)
&\le& C_{n,m}\Liint_{S^{n-1}\times S^{m-1}}
|\Omega_{o,e}(u',v')|
\left(\Phi_{\mathbb R^m}^{*}\circ
\widetilde H_{u'}^{*,\mathbb R^n}\right)(f)(x,y)\,du'dv'
\nonumber\\
&&+C_{n,m}\Liint_{S^{n-1}\times A_m}
|\Omega_{o,e}(u',z')|
\left(\Phi_{z,\mathbb R^m}^{*}\circ
\widetilde H_{u'}^{*,\mathbb R^n}\right)(f)(x,y)\,du'dz.
\ea

	Combining \eqref{19}--\eqref{20} and \eqref{21}--\eqref{26}, we obtain
	\ba\label{27}
	{\mathcal T}_{*}^{o,e}(f)(x,y)
	&\le&
	C_{n,m}
	\Liint_{S^{n-1}\times S^{m-1}}
	|\omega_{o,e}(u',v')|
	\widetilde H_{u',v'}^{*,1}(f)(x,y)
	\,du'dv'
	\nonumber\\
	&&+
	C_{n,m}
	\Liint_{S^{n-1}\times S^{m-1}}
	\left(
	|\Omega_{o,e}|+|\omega_{o,e}|
	\right)(u',v')
	\nonumber\\
	&&\qquad\qquad\cdot
	\left(
	(\Phi_{\R^m}^{*}+M_{v'}^{\R^m})
	\circ
	\widetilde H_{u'}^{*,\R^n}
	\right)(f)(x,y)
	\,du'dv'
\nonumber\\
&&+C_{n,m}\Liint_{S^{n-1}\times A_m}
|\Omega_{o,e}(u',z')|
\left(\Phi_{z,\mathbb R^m}^{*}\circ
\widetilde H_{u'}^{*,\mathbb R^n}\right)(f)(x,y)\,du'dz.
	\ea

	Similarly, for \((\alpha,\beta)=(e,o)\),
	the symmetric estimate is
	\ba\label{28}
	{\mathcal T}_{*}^{e,o}(f)(x,y)
	&\le&
	C_{n,m}
	\Liint_{S^{n-1}\times S^{m-1}}
	|\omega_{e,o}(u',v')|
	\widetilde H_{u',v'}^{*,2}(f)(x,y)
	\,du'dv'
	\nonumber\\
	&&+
	C_{n,m}
	\Liint_{S^{n-1}\times S^{m-1}}
	\left(
	|\Omega_{e,o}|+|\omega_{e,o}|
	\right)(u',v')
	\nonumber\\
	&&\qquad\qquad\cdot
	\left(
	(\Phi_{\R^n}^{*}+M_{u'}^{\R^n})
	\circ
	\widetilde H_{v'}^{*,\R^m}
	\right)(f)(x,y)
	\,du'dv'
\nonumber\\
&&+C_{n,m}\Liint_{A_n\times S^{m-1}}
|\Omega_{e,o}(z',v')|
\left(\Phi_{z,\mathbb R^n}^{*}\circ
\widetilde H_{v'}^{*,\mathbb R^m}\right)(f)(x,y)\,dzdv'.
	\ea

	{\bf Case III:} \((\alpha,\beta)=(e,e)\).

In this case both variables are transformed. We use the notation
\(\omega_{e,e}\), \(\Omega_{e,e}'\),
\(\Omega_{e,e}''\) ( see \eqref{def:omegaab}, \eqref{def:omega}) and \({\mathcal K}_{e,e}=\omega_{e,e}(x',y')|x|^{-n}|y|^{-m}\) introduced above.
Thus \(\Omega_{e,e}'\) corresponds to transforming only the first variable,
	\(\Omega_{e,e}''\) corresponds to transforming only the second variable, and
	\(\omega_{e,e}\) corresponds to transforming both variables.

	For \(l=n,m\), let
	\[
	E_t^{\R^l,0}
	=
	\left\{z\in\R^l: |z|\le \frac{t}{4}\right\},
	\qquad
	E_t^{\R^l,1}
	=
	\left\{z\in\R^l: \frac{t}{4}< |z|<2t\right\},
	\]
	and
	\[
	E_t^{\R^l,\infty}
	=
	\left\{z\in\R^l: |z|\ge 2t\right\}.
	\]
	For \(i,j\in\{0,1,\infty\}\), set
	\[
	E_{t,s}^{i,j}
	=
	E_t^{\R^n,i}\times E_s^{\R^m,j},
	\]
	and define
	\[
	{\mathcal L}_{i,j,t,s}^{e,e}(x,y)
	=
	\chi_{E_{t,s}^{i,j}}(x,y)
	{\mathcal L}_{t,s}^{e,e}(x,y).
	\]
	Then
	\[
	{\mathcal L}_{t,s}^{e,e}
	=
	\sum_{i,j\in\{0,1,\infty\}}
	{\mathcal L}_{i,j,t,s}^{e,e},
	\]
	and consequently
	\ba\label{29}
	{\mathcal T}_{*}^{e,e}(f)(x,y)
	&\le&
	\sum_{i,j\in\{0,1,\infty\}}
	\sup_{t,s>0}
	\left|
	{\mathcal L}_{i,j,t,s}^{e,e}*f(x,y)
	\right|
	\nonumber\\
	&:=&
	\sum_{i,j\in\{0,1,\infty\}}
	{\mathcal T}_{i,j,*}^{e,e}(f)(x,y).
	\ea

For the term
${\mathcal L}_{\infty,\infty,t,s}^{e,e}$, $|x|>2t$, $|y|>2s$, we have
\begin{eqnarray*}
	{\mathcal L}_{\infty,\infty,t,s}^{e,e}(x,y)
	&=&
	{\mathcal K}_{e,e}(x,y)
	\\
	&&-
	|y|^{-m}
	\int_{\R^n}
	\left(
	K_n(x-z)-K_n(x)
	\right)
	\frac{\Omega_{e,e}''(z',y')}{|z|^n}
	\left(1-\widetilde{\chi}\left(\frac{|z|}{t}\right)\right)\,dz
	\\
	&&-
	|x|^{-n}
	\int_{\R^m}
	\left(
	K_m(y-w)-K_m(y)
	\right)
	\frac{\Omega_{e,e}'(x',w')}{|w|^m}
	\left(1-\widetilde{\chi}\left(\frac{|w|}{s}\right)\right)\,dw
	\\
	&&+
	\Liint_{\R^n\times\R^m}
	\left(
	K_n(x-z)-K_n(x)
	\right)
	\left(
	K_m(y-w)-K_m(y)
	\right)
	\\
	&&\qquad\qquad\qquad\cdot
	\frac{\Omega_{e,e}(z',w')}{|z|^n|w|^m}
\left(1-\widetilde{\chi}\left(\frac{|z|}{t}\right)\right)\left(1-\widetilde{\chi}\left(\frac{|w|}{s}\right)\right)\,dzdw .
\end{eqnarray*}
The error terms are estimated by using \eqref{est:E2} in the variables where
\(1-\widetilde\chi\) appears. More precisely, the \(z\)-error is controlled by
\eqref{est:E2} with \((X,Z,r,l)=(x,z,t,n)\), the \(w\)-error by
\eqref{est:E2} with \((X,Z,r,l)=(y,w,s,m)\), and the double error by applying
\eqref{est:E2} in both variables. Therefore
we obtain
\begin{eqnarray}\label{teeiis}
	{\mathcal T}_{\infty,\infty,*}^{e,e}(f)(x,y)
	&\le&
	C_{n,m}
	\Liint_{S^{n-1}\times S^{m-1}}
	|\omega_{e,e}(u',v')|
H_{u',v'}^{*,0}(f)(x,y)
	\,du'dv'
\nonumber	\\
	&&+
	C_{n,m}
	\int_{S^{m-1}}
	\left\|
	\Omega_{e,e}''(\cdot,v')
	\right\|_{L^1(S^{n-1})}
	\left(
	\Phi_{\R^n}^{*}\circ H_{v'}^{*,\R^m}
	\right)(f)(x,y)
	\,dv'
\nonumber	\\
	&&+
	C_{n,m}
	\int_{S^{n-1}}
	\left\|
	\Omega_{e,e}'(u',\cdot)
	\right\|_{L^1(S^{m-1})}
	\left(
	\Phi_{\R^m}^{*}\circ H_{u'}^{*,\R^n}
	\right)(f)(x,y)
	\,du'
\nonumber	\\
	&&+
	C_{n,m}
	\|\Omega_{e,e}\|_{L^1(S^{n-1}\times S^{m-1})}
	\left(
	\Phi_{\R^m}^{*}\circ \Phi_{\R^n}^{*}
	\right)(f)(x,y),
\end{eqnarray}
where the maximal operators appearing on the right-hand side are defined in
\eqref{18}, \eqref{22}, and \eqref{23}.

For the term \({\mathcal L}_{0,0,t,s}^{e,e}\), \(|x|\le t/4\) and
\(|y|\le s/4\). By \eqref{est:E1} in both variables, we have
$
|x-z|\sim |z|,
|y-w|\sim |w|,
$
whenever
$
\widetilde\chi\left(\frac{|z|}{t}\right)
\widetilde\chi\left(\frac{|w|}{s}\right)\neq0.
$
Thus
\begin{eqnarray*}
\left|
{\mathcal L}_{0,0,t,s}^{e,e}(x,y)
\right|
&\le&
C_{n,m}
\Liint_{|z|\ge t/2,\ |w|\ge s/2}
\frac{|\Omega_{e,e}(z',w')|}{|z|^{2n}|w|^{2m}}
\,dzdw \\
&
\le&
C_{n,m}
t^{-n}s^{-m}
\|\Omega_{e,e}\|_{L^1(S^{n-1}\times S^{m-1})}.
\end{eqnarray*}
Therefore
\begin{eqnarray}\label{es:ee00s}
	{\mathcal T}_{0,0,*}^{e,e}(f)(x,y)
	\le
	C_{n,m}
	\|\Omega_{e,e}\|_{L^1(S^{n-1}\times S^{m-1})}
	\left(
	\Phi_{\R^m}^{*}\circ \Phi_{\R^n}^{*}
	\right)(f)(x,y).
\end{eqnarray}

For \({\mathcal L}_{\infty,0,t,s}^{e,e}\),
\(|x|\ge 2t\) and \(|y|<s/4\), we have
\[
{\mathcal L}_{\infty,0,t,s}^{e,e}
=
I_{\infty,0,t,s}^{1}
-
I_{\infty,0,t,s}^{2},
\]
where
\begin{eqnarray*}
	I_{\infty,0,t,s}^{1}(x,y)
	&=&
	|x|^{-n}
	\int_{\R^m}
	K_m(y-w)
	\frac{\Omega_{e,e}'(x',w')}{|w|^m}
	\widetilde{\chi}\left(\frac{|w|}{s}\right)
	\,dw,
\end{eqnarray*}
and
\begin{eqnarray*}
	I_{\infty,0,t,s}^{2}(x,y)
	&=&
	\Liint_{\R^n\times\R^m}
	\left(
	K_n(x-z)-K_n(x)
	\right)
K_m(y-w)
	\\
	&&\qquad\qquad\cdot
	\frac{\Omega_{e,e}(z',w')}{|z|^n|w|^m}
\left(1-\widetilde{\chi}\left(\frac{|z|}{t}\right)\right)
	\widetilde{\chi}\left(\frac{|w|}{s}\right)
	\,dzdw .
\end{eqnarray*}
By \eqref{est:E1}, with \((X,Z,r,l)=(y,w,s,m)\), we have
$
|y-w|\sim |w|
$
on the support of \(\widetilde\chi(|w|/s)\). Moreover, by \eqref{est:E2}, with
\((X,Z,r,l)=(x,z,t,n)\), the \(z\)-error satisfies
\[
|K_n(x-z)-K_n(x)|
\le
C_n\frac{|z|}{|x|^{n+1}}.
\]
Therefore, the estimates
\[
|I_{\infty,0,t,s}^{1}(x,y)|
\le
C_{n,m}
\int_{|w|\ge s/2}
\frac{|\Omega_{e,e}'(x',w')|}
{|x|^n|w|^{2m}}
\,dw,
\]
and\[
|I_{\infty,0,t,s}^{2}(x,y)|
\le
C_{n,m}
\Liint_{|z|\le t,\ |w|\ge s/2}
\frac{|\Omega_{e,e}(z',w')|}
{|x|^{n+1}|z|^{n-1}|w|^{2m}}
\,dzdw
\]
hold.
Consequently,
\begin{eqnarray*}
	\left|
	{\mathcal L}_{\infty,0,t,s}^{e,e}(x,y)
	\right|
	&\le&
	C_{n,m}
	\int_{|w|\ge s/2}
	\frac{|\Omega_{e,e}'(x',w')|}
	{|x|^n|w|^{2m}}
	\,dw
	\\
	&&+
	C_{n,m}
	\Liint_{|z|\le t,\ |w|\ge s/2}
	\frac{|\Omega_{e,e}(z',w')|}
	{|x|^{n+1}|z|^{n-1}|w|^{2m}}
	\, dzdw.
\end{eqnarray*}
It follows from the preceding pointwise estimate that
\begin{eqnarray}\label{es:eei0s}
	{\mathcal T}_{\infty,0,*}^{e,e}(f)(x,y)
	&\le&
	C_{n,m}
	\int_{S^{n-1}}
	\left\|
	\Omega_{e,e}'(u',\cdot)
	\right\|_{L^1(S^{m-1})}
	\left(
	\Phi_{\R^m}^{*}\circ H_{u'}^{*,\R^n}
	\right)(f)(x,y)
	\,du'\nonumber
	\\
	&&+
	C_{n,m}
	\|\Omega_{e,e}\|_{L^1(S^{n-1}\times S^{m-1})}
	\left(
	\Phi_{\R^m}^{*}\circ \Phi_{\R^n}^{*}
	\right)(f)(x,y).
\end{eqnarray}

The estimate for \({\mathcal L}_{0,\infty,t,s}^{e,e}\) is symmetric. We have
\begin{eqnarray}\label{es:ee0is}
	{\mathcal T}_{0,\infty,*}^{e,e}(f)(x,y)
	&\le&
	C_{n,m}
	\int_{S^{m-1}}
	\left\|
	\Omega_{e,e}''(\cdot,v')
	\right\|_{L^1(S^{n-1})}
	\left(
	\Phi_{\R^n}^{*}\circ H_{v'}^{*,\R^m}
	\right)(f)(x,y)
	\,dv'\nonumber
	\\
	&&+
	C_{n,m}
	\|\Omega_{e,e}\|_{L^1(S^{n-1}\times S^{m-1})}
	\left(
	\Phi_{\R^m}^{*}\circ \Phi_{\R^n}^{*}
	\right)(f)(x,y).
\end{eqnarray}

For \({\mathcal L}_{1,0,t,s}^{e,e}\), where
\(t/4<|x|<2t\) and \(|y|<s/4\). In this region we compare
\(\widetilde\chi(|z|/t)\) with \(\widetilde\chi(|x|/t)\). Accordingly, we write
\beno
&&{\mathcal L}_{1,0,t,s}^{e,e}(x,y)\\&=&\Liint_{\R^n\times\R^m}\frac{\gamma_n(x-z)}{|x-z|^{n+1}}
\widetilde{\chi}(\left|\frac{z}{t}\right|)\frac{\gamma_m(y-w)}{|y-w|^{m+1}}
\frac{\Omega_{e,e}(z',w')}{|z|^n|w|^m}\widetilde{\chi}(\left|\frac{w}{s}\right|)dzdw\\
&=&	J_{1,0,t,s}^{1}
+
J_{1,0,t,s}^{2}
+
J_{1,0,t,s}^{3}.
\eeno
Here the main term is
\begin{eqnarray*}
	J_{1,0,t,s}^{1}(x,y)
	&=&
	\widetilde{\chi}\left(\frac{|x|}{t}\right)
	|x|^{-n}
	\int_{\R^m}
	K_m(y-w)
	\frac{\Omega_{e,e}'(x',w')}{|w|^m}
	\widetilde{\chi}\left(\frac{|w|}{s}\right)
	\,dw,
\end{eqnarray*}
the annular cutoff error is
\begin{eqnarray*}
	J_{1,0,t,s}^{2}(x,y)
	&=&
	\Liint_{\{t/8\le |z|\le 3t\}\times\R^m}
	K_n(x-z)
	\left[
	\widetilde{\chi}\left(\frac{|z|}{t}\right)
	-
	\widetilde{\chi}\left(\frac{|x|}{t}\right)
	\right]
	\\
	&&\qquad\qquad\cdot
	K_m(y-w)
	\frac{\Omega_{e,e}(z',w')}{|z|^n|w|^m}
	\widetilde{\chi}\left(\frac{|w|}{s}\right)
	\,dzdw,
\end{eqnarray*}
and \(J_{1,0,t,s}^{3}\) denotes the remaining smoother terms, corresponding to
\(|z|<t/8\) and \(|z|>3t\).

By \eqref{est:E1}, with \((X,Z,r,l)=(y,w,s,m)\), we have
$
|y-w|\sim |w|
$
on the support of \(\widetilde\chi(|w|/s)\). Since $t/2<|x|<2t$ on the support of the main term, its
radial integral is bounded by a directional maximal average. Thus
\[
\sup_{t,s>0}
\left|
J_{1,0,t,s}^{1}*f(x,y)
\right|
\le
C_{n,m}
\int_{S^{n-1}}
\left\|
\Omega_{e,e}'(u',\cdot)
\right\|_{L^1(S^{m-1})}
\left(
\Phi_{\R^m}^{*}\circ
M_{u'}^{\R^n}
\right)(f)(x,y)
\,du' .
\]

For \(J_{1,0,t,s}^{2}\), applying the annular cutoff estimate
\eqref{est:E4} in the \(x\)-variable and \eqref{est:E1} in the \(y\)-variable
gives
\[
\left|
J_{1,0,t,s}^{2}(x,y)
\right|
\le
C_{n,m}
\Liint_{\{t/8\le |z|\le3t,\ |w|\ge s/2\}}
\frac{|\Omega_{e,e}(z',w')|}
{t^{n+1}|x-z|^{n-1}|w|^{2m}}
\,dzdw .
\]
Putting \(z=tu\), \(u\in A_n\), and integrating in the radial variable of
\(w\), we obtain
\[
\left|
J_{1,0,t,s}^{2}(x,y)
\right|
\le
C_{n,m}
\int_{A_n}
\left\|
\Omega_{e,e}(u',\cdot)
\right\|_{L^1(S^{m-1})}
\Phi_t^{\R^n}(x-tu)
\Phi_s^{\R^m}(y)
\,du .
\]
Consequently,
\[
\sup_{t,s>0}
\left|
J_{1,0,t,s}^{2}*f(x,y)
\right|
\le
C_{n,m}
\int_{A_n}
\left\|
\Omega_{e,e}(u',\cdot)
\right\|_{L^1(S^{m-1})}
\left(
\Phi_{\R^m}^{*}\circ
\Phi_{u,\R^n}^{*}
\right)(f)(x,y)
\,du .
\]

The smoother term satisfies
\[
\left|
J_{1,0,t,s}^{3}(x,y)
\right|
\le
C_{n,m}
\|\Omega_{e,e}\|_{L^1(S^{n-1}\times S^{m-1})}
\Phi_t^{\R^n}(x)
\Phi_s^{\R^m}(y).
\]
Therefore
\begin{eqnarray}\label{es:ee10s}
	{\mathcal T}_{1,0,*}^{e,e}(f)(x,y)
	&\le&
	C_{n,m}
	\int_{S^{n-1}}
	\left\|
	\Omega_{e,e}'(u',\cdot)
	\right\|_{L^1(S^{m-1})}
	\left(
	\Phi_{\R^m}^{*}\circ
	M_{u'}^{\R^n}
	\right)(f)(x,y)
	\,du'
	\nonumber\\
	&&+
	C_{n,m}
	\int_{A_n}
	\left\|
	\Omega_{e,e}(u',\cdot)
	\right\|_{L^1(S^{m-1})}
	\left(
	\Phi_{\R^m}^{*}\circ
	\Phi_{u,\R^n}^{*}
	\right)(f)(x,y)
	\,du
	\nonumber\\
	&&+
	C_{n,m}
	\|\Omega_{e,e}\|_{L^1(S^{n-1}\times S^{m-1})}
	\left(
	\Phi_{\R^m}^{*}\circ
	\Phi_{\R^n}^{*}
	\right)(f)(x,y).
\end{eqnarray}

The estimate for \({\mathcal L}_{0,1,t,s}^{e,e}\) is symmetric, using
\eqref{est:E1} in the \(x\)-variable and \eqref{est:E4} in the \(y\)-variable, we obtain
\begin{eqnarray}\label{es:ee01s}
	{\mathcal T}_{0,1,*}^{e,e}(f)(x,y)
	&\le&
	C_{n,m}
	\int_{S^{m-1}}
	\left\|
	\Omega_{e,e}''(\cdot,v')
	\right\|_{L^1(S^{n-1})}
	\left(
	\Phi_{\R^n}^{*}\circ
	M_{v'}^{\R^m}
	\right)(f)(x,y)
	\,dv'
	\nonumber\\
	&&+
	C_{n,m}
	\int_{A_m}
	\left\|
	\Omega_{e,e}(\cdot,v')
	\right\|_{L^1(S^{n-1})}
	\left(
	\Phi_{\R^n}^{*}\circ
	\Phi_{v,\R^m}^{*}
	\right)(f)(x,y)
	\,dv
	\nonumber\\
	&&+
	C_{n,m}
	\|\Omega_{e,e}\|_{L^1(S^{n-1}\times S^{m-1})}
	\left(
	\Phi_{\R^m}^{*}\circ
	\Phi_{\R^n}^{*}
	\right)(f)(x,y).
\end{eqnarray}

For  \({\mathcal L}_{1,1,t,s}^{e,e}\), where
\(t/4<|x|<2t\) and \(s/4<|y|<2s\). In this region both variables are
annular. We compare
\(\widetilde\chi(|z|/t)\) with \(\widetilde\chi(|x|/t)\), and
\(\widetilde\chi(|w|/s)\) with \(\widetilde\chi(|y|/s)\).

The principal term is
\[
\widetilde\chi\left(\frac{|x|}{t}\right)
\widetilde\chi\left(\frac{|y|}{s}\right)
{\mathcal K}_{e,e}(x,y),
\]
and the two finite radial integrals are controlled by
\[
C_{n,m}
\Liint_{S^{n-1}\times S^{m-1}}
|\omega_{e,e}(u',v')|
M_{u',v'}(f)(x,y)
\,du'dv'.
\]

The error terms are divided into three types. First, the terms in which only
the \(y\)-cutoff is varied contain the partial transform \(\Omega_{e,e}'\).
By \eqref{est:E4} in the \(y\)-variable, these terms are bounded by
\[
C_{n,m}
\Liint_{S^{n-1}\times A_m}
|\Omega_{e,e}'(u',v')|
\left(
\Phi_{v,\R^m}^{*}\circ
M_{u'}^{\R^n}
\right)(f)(x,y)
\,du'dv .
\]
Similarly, by \eqref{est:E4} in the \(x\)-variable, the terms in which only the
\(x\)-cutoff is varied contain \(\Omega_{e,e}''\), and are bounded by
\[
C_{n,m}
\Liint_{A_n\times S^{m-1}}
|\Omega_{e,e}''(u',v')|
\left(
\Phi_{u,\R^n}^{*}\circ
M_{v'}^{\R^m}
\right)(f)(x,y)
\,dudv' .
\]

Second, the term in which both annular cutoffs are varied contains the original
spherical function \(\Omega_{e,e}\). Denote this term by
\[
D_{1,1,t,s}(x,y).
\]
Applying \eqref{est:E4} in both variables gives
\[
\left|
D_{1,1,t,s}(x,y)
\right|
\le
C_{n,m}
\Liint_{A_n\times A_m}
|\Omega_{e,e}(u',v')|
\Phi_t^{\R^n}(x-tu)
\Phi_s^{\R^m}(y-sv)
\,dudv .
\]
Consequently,
\[
\sup_{t,s>0}
\left|
D_{1,1,t,s}*f(x,y)
\right|
\le
C_{n,m}
\Liint_{A_n\times A_m}
|\Omega_{e,e}(u',v')|
\left(
\Phi_{u,\R^n}^{*}\circ
\Phi_{v,\R^m}^{*}
\right)(f)(x,y)
\,dudv .
\]

Finally, the smoother remainders, corresponding to the regions where at least
one variable lies outside the annuli
\[
A_n=\{u\in\R^n:1/8\le |u|\le 3\},
\qquad
A_m=\{v\in\R^m:1/8\le |v|\le 3\},
\]
are estimated as in the \((1,0)\) and \((0,1)\) regions. They are bounded by
\[
C_{n,m}
\int_{S^{n-1}}
\left\|
\Omega_{e,e}'(u',\cdot)
\right\|_{L^1(S^{m-1})}
\left(
\Phi_{\R^m}^{*}\circ
M_{u'}^{\R^n}
\right)(f)(x,y)
\,du'
\]
\[
+
C_{n,m}
\int_{S^{m-1}}
\left\|
\Omega_{e,e}''(\cdot,v')
\right\|_{L^1(S^{n-1})}
\left(
\Phi_{\R^n}^{*}\circ
M_{v'}^{\R^m}
\right)(f)(x,y)
\,dv'
\]
\[
+C_{n,m}\int_{A_n}
\|\Omega_{e,e}(u',\cdot)\|_{L^1(S^{m-1})}
\left(\Phi_{u,\mathbb R^n}^{*}\circ\Phi_{\mathbb R^m}^{*}\right)(f)(x,y)\,du
\]
\[
+C_{n,m}\int_{A_m}
\|\Omega_{e,e}(\cdot,v')\|_{L^1(S^{n-1})}
\left(\Phi_{\mathbb R^n}^{*}\circ\Phi_{v,\mathbb R^m}^{*}\right)(f)(x,y)\,dv
\]
\[
+
C_{n,m}
\|\Omega_{e,e}\|_{L^1(S^{n-1}\times S^{m-1})}
\left(
\Phi_{\R^m}^{*}\circ
\Phi_{\R^n}^{*}
\right)(f)(x,y).
\]

Combining these estimates, we obtain
\begin{eqnarray}\label{es:ee11s}
	{\mathcal T}_{1,1,*}^{e,e}(f)(x,y)
	&\le&
	C_{n,m}
	\Liint_{S^{n-1}\times S^{m-1}}
	|\omega_{e,e}(u',v')|
	M_{u',v'}(f)(x,y)
	\,du'dv'
	\nonumber\\
	&&+
	C_{n,m}
	\int_{S^{n-1}}
	\left\|
	\Omega_{e,e}'(u',\cdot)
	\right\|_{L^1(S^{m-1})}
	\left(
	\Phi_{\R^m}^{*}\circ
	M_{u'}^{\R^n}
	\right)(f)(x,y)
	\,du'
	\nonumber\\
	&&+
	C_{n,m}
	\int_{S^{m-1}}
	\left\|
	\Omega_{e,e}''(\cdot,v')
	\right\|_{L^1(S^{n-1})}
	\left(
	\Phi_{\R^n}^{*}\circ
	M_{v'}^{\R^m}
	\right)(f)(x,y)
	\,dv'
	\nonumber\\
	&&+
	C_{n,m}
	\Liint_{S^{n-1}\times A_m}
	|\Omega_{e,e}'(u',v')|
	\left(
	\Phi_{v,\R^m}^{*}\circ
	M_{u'}^{\R^n}
	\right)(f)(x,y)
	\,du'dv
	\nonumber\\
	&&+
	C_{n,m}
	\Liint_{A_n\times S^{m-1}}
	|\Omega_{e,e}''(u',v')|
	\left(
	\Phi_{u,\R^n}^{*}\circ
	M_{v'}^{\R^m}
	\right)(f)(x,y)
	\,dudv'
	\nonumber\\
	&&+
	C_{n,m}
	\Liint_{A_n\times A_m}
	|\Omega_{e,e}(u',v')|
	\left(
	\Phi_{u,\R^n}^{*}\circ
	\Phi_{v,\R^m}^{*}
	\right)(f)(x,y)
	\,dudv
	\nonumber\\
&&+C_{n,m}\int_{A_n}
\|\Omega_{e,e}(u',\cdot)\|_{L^1(S^{m-1})}
\left(\Phi_{u,\mathbb R^n}^{*}\circ\Phi_{\mathbb R^m}^{*}\right)(f)(x,y)\,du
\nonumber\\
&&+C_{n,m}\int_{A_m}
\|\Omega_{e,e}(\cdot,v')\|_{L^1(S^{n-1})}
\left(\Phi_{\mathbb R^n}^{*}\circ\Phi_{v,\mathbb R^m}^{*}\right)(f)(x,y)\,dv
\nonumber\\
	&&+
	C_{n,m}
	\|\Omega_{e,e}\|_{L^1(S^{n-1}\times S^{m-1})}
	\left(
	\Phi_{\R^m}^{*}\circ
	\Phi_{\R^n}^{*}
	\right)(f)(x,y).
\end{eqnarray}

We next consider \({\mathcal L}_{1,\infty,t,s}^{e,e}\), where
\(t/4<|x|<2t\) and \(|y|\ge 2s\). In this region the first variable is
annular, while the second variable is away from the support of the cutoff. We
compare \(\widetilde\chi(|z|/t)\) with \(\widetilde\chi(|x|/t)\), and replace
\(\widetilde\chi(|w|/s)\) by \(1\), treating
\(\widetilde\chi(|w|/s)-1\) as a cutoff error.

The principal term contains the fully transformed spherical function
\(\omega_{e,e}\), and is controlled by
\[
C_{n,m}
\Liint_{S^{n-1}\times S^{m-1}}
|\omega_{e,e}(u',v')|
\left(M_{u'}^{\R^n}\circ H_{v'}^{*,\R^m}\right)(f)(x,y)
\,du'dv' .
\]
The \(y\)-cutoff error with the first variable already transformed is
controlled by \eqref{est:E2} in the \(y\)-variable; it contains
\(\Omega_{e,e}'\), and gives
\[
C_{n,m}
\int_{S^{n-1}}
\left\|
\Omega_{e,e}'(u',\cdot)
\right\|_{L^1(S^{m-1})}
\left(
\Phi_{\R^m}^{*}\circ
M_{u'}^{\R^n}
\right)(f)(x,y)
\,du' .
\]
The annular cutoff error in the \(x\)-variable is controlled by
\eqref{est:E4}; with the second variable already transformed, it contains
\(\Omega_{e,e}''\).
Its non-annular part is controlled by
\[
C_{n,m}
\int_{S^{m-1}}
\left\|
\Omega_{e,e}''(\cdot,v')
\right\|_{L^1(S^{n-1})}
\left(
\Phi_{\R^n}^{*}\circ
H_{v'}^{*,\R^m}
\right)(f)(x,y)
\,dv',
\]
while its annular part is controlled by
\[
C_{n,m}
\Liint_{A_n\times S^{m-1}}
|\Omega_{e,e}''(u',v')|
\left(
\Phi_{u,\R^n}^{*}\circ
H_{v'}^{*,\R^m}
\right)(f)(x,y)
\,dudv' .
\]
Finally, the remaining terms contain the original spherical function
\(\Omega_{e,e}\), and are bounded by
\begin{eqnarray*}
C_{n,m}
\int_{A_n}
\left\|
\Omega_{e,e}(u',\cdot)
\right\|_{L^1(S^{m-1})}
\left(
\Phi_{u,\R^n}^{*}\circ
\Phi_{\R^m}^{*}
\right)(f)(x,y)
\,du\\
+
C_{n,m}
\|\Omega_{e,e}\|_{L^1(S^{n-1}\times S^{m-1})}
\left(
\Phi_{\R^m}^{*}\circ
\Phi_{\R^n}^{*}
\right)(f)(x,y).
\end{eqnarray*}
Combining these bounds, we obtain
\begin{eqnarray}\label{es:ee1is}
	{\mathcal T}_{1,\infty,*}^{e,e}(f)(x,y)
	&\le&
	C_{n,m}
	\Liint_{S^{n-1}\times S^{m-1}}
	|\omega_{e,e}(u',v')|
	\left(M_{u'}^{\R^n}\circ H_{v'}^{*,\R^m}\right)(f)(x,y)
	\,du'dv'
	\nonumber\\
	&&+
	C_{n,m}
	\int_{S^{n-1}}
	\left\|
	\Omega_{e,e}'(u',\cdot)
	\right\|_{L^1(S^{m-1})}
	\left(
	\Phi_{\R^m}^{*}\circ
	M_{u'}^{\R^n}
	\right)(f)(x,y)
	\,du'
	\nonumber\\
	&&+
	C_{n,m}
	\int_{S^{m-1}}
	\left\|
	\Omega_{e,e}''(\cdot,v')
	\right\|_{L^1(S^{n-1})}
	\left(
	\Phi_{\R^n}^{*}\circ
	H_{v'}^{*,\R^m}
	\right)(f)(x,y)
	\,dv'
	\nonumber\\
	&&+
	C_{n,m}
	\Liint_{A_n\times S^{m-1}}
	|\Omega_{e,e}''(u',v')|
	\left(
	\Phi_{u,\R^n}^{*}\circ
	H_{v'}^{*,\R^m}
	\right)(f)(x,y)
	\,dudv'
	\nonumber\\
	&&+
	C_{n,m}
	\int_{A_n}
	\left\|
	\Omega_{e,e}(u',\cdot)
	\right\|_{L^1(S^{m-1})}
	\left(
	\Phi_{u,\R^n}^{*}\circ
	\Phi_{\R^m}^{*}
	\right)(f)(x,y)
	\,du
	\nonumber\\
	&&+
	C_{n,m}
	\|\Omega_{e,e}\|_{L^1(S^{n-1}\times S^{m-1})}
	\left(
	\Phi_{\R^m}^{*}\circ
	\Phi_{\R^n}^{*}
	\right)(f)(x,y).
\end{eqnarray}

By symmetry with the preceding case, using \eqref{est:E2} in the
\(x\)-variable and \eqref{est:E4} in the \(y\)-variable, we obtain
\begin{eqnarray}\label{es:eei1s}
	{\mathcal T}_{\infty,1,*}^{e,e}(f)(x,y)
	&\le&
	C_{n,m}
	\Liint_{S^{n-1}\times S^{m-1}}
	|\omega_{e,e}(u',v')|
	\left(M_{v'}^{\R^m}\circ H_{u'}^{*,\R^n}\right)(f)(x,y)
	\,du'dv'
	\nonumber\\
	&&+
	C_{n,m}
	\int_{S^{m-1}}
	\left\|
	\Omega_{e,e}''(\cdot,v')
	\right\|_{L^1(S^{n-1})}
	\left(
	\Phi_{\R^n}^{*}\circ
	M_{v'}^{\R^m}
	\right)(f)(x,y)
	\,dv'
	\nonumber\\
	&&+
	C_{n,m}
	\int_{S^{n-1}}
	\left\|
	\Omega_{e,e}'(u',\cdot)
	\right\|_{L^1(S^{m-1})}
	\left(
	\Phi_{\R^m}^{*}\circ
	H_{u'}^{*,\R^n}
	\right)(f)(x,y)
	\,du'
	\nonumber\\
	&&+
	C_{n,m}
	\Liint_{S^{n-1}\times A_m}
	|\Omega_{e,e}'(u',v')|
	\left(
	\Phi_{v,\R^m}^{*}\circ
	H_{u'}^{*,\R^n}
	\right)(f)(x,y)
	\,du'dv
	\nonumber\\
	&&+
	C_{n,m}
	\int_{A_m}
	\left\|
	\Omega_{e,e}(\cdot,v')
	\right\|_{L^1(S^{n-1})}
	\left(
	\Phi_{v,\R^m}^{*}\circ
	\Phi_{\R^n}^{*}
	\right)(f)(x,y)
	\,dv
	\nonumber\\
	&&+
	C_{n,m}
	\|\Omega_{e,e}\|_{L^1(S^{n-1}\times S^{m-1})}
	\left(
	\Phi_{\R^m}^{*}\circ
	\Phi_{\R^n}^{*}
	\right)(f)(x,y).
\end{eqnarray}

Combining the estimates for all
\((i,j)\in\{0,1,\infty\}^2\),  \eqref{teeiis}-\eqref{es:eei1s},
we obtain
\[
\sum_{\alpha,\beta\in\{o,e\}}
{\mathcal T}_{*}^{\alpha,\beta}(f)(x,y)
\le
{\mathcal M}_{\Omega}(f)(x,y),
\]
where \({\mathcal M}_{\Omega}\) is a finite sum of the maximal operators appearing
on the right-hand sides of \eqref{17}, \eqref{27}, \eqref{28}, and \eqref{teeiis}-\eqref{es:eei1s},
with coefficients
\[
|\omega_{\alpha,\beta}|,
\qquad
|\Omega_{\alpha,\beta}|,
\qquad
|\Omega_{e,e}'|,
\qquad
|\Omega_{e,e}''|.
\]
More precisely, every term in \({\mathcal M}_{\Omega}\) is a composition of
directional maximal Hilbert transforms, one-parameter maximal functions, or
translated radial maximal functions, all of which are bounded on
\(L^p(\R^n\times\R^m)\), \(1<p<\infty\), with bounds independent of the
directions and of the translation parameters in the compact annuli
\(A_n\) and \(A_m\).

Finally, by Lemma \ref{lemma7}, all $\omega_{\alpha,\beta}$ belong to $L^1$, and  the even-even partial transforms $\Omega_{e,e}'$ and $\Omega_{e,e}''$ also belong to $L^1(S^{n-1}\times S^{m-1})$:
$$
\sum_{\alpha,\beta}\|\omega_{\alpha,\beta}\|_1+\|\Omega_{e,e}'\|_1+\|\Omega_{e,e}''\|_1+\|\Omega\|_1\le C\|\Omega\|_{H^1(S^{n-1}\times S^{m-1}) }.
$$
Hence Minkowski's
inequality and the \(L^p\)-boundedness of the above maximal operators imply
\[
\sum_{\alpha,\beta\in\{o,e\}}
\left\|
{\mathcal T}_{*}^{\alpha,\beta}(f)
\right\|_{L^p(\R^n\times\R^m)}
\le
C_{p,n,m}
\|\Omega\|_{H^1(S^{n-1}\times S^{m-1})}
\|f\|_{L^p(\R^n\times\R^m)}.
\]

Recall that
\[
T_{*}^{\alpha,\beta}(f)
=
{\mathcal T}_{*}^{\alpha,\beta}(f^{\alpha,\beta}),
\qquad
f^{\alpha,\beta}
=
\left(
\mathfrak R_{\R^n}^{\alpha}
\otimes
\mathfrak R_{\R^m}^{\beta}
\right)f,
\]
up to harmless signs inside the absolute value. Since the Riesz transforms are
bounded on \(L^p\), it follows that
\[
\sum_{\alpha,\beta\in\{o,e\}}
\left\|
T_{*}^{\alpha,\beta}(f)
\right\|_{L^p(\R^n\times\R^m)}
\le
C_{p,n,m}
\|\Omega\|_{H^1(S^{n-1}\times S^{m-1})}
\|f\|_{L^p(\R^n\times\R^m)}.
\]

We now estimate the local terms. Fix a parity component and, for
$u'\in S^{n-1}$, put $b_{u'}(v')=\Omega_{\alpha,\beta}(u',v')$.
By the separate cancellation condition, $b_{u'}$ has mean zero on
$S^{m-1}$ for almost every $u'\in S^{n-1}$.
Let
\[
B_{s,b}g(y)=\int_{\mathbb R^m}
\frac{b(v')}{|v|^m}\widetilde\chi(|v|/s)g(y-v)\,dv,
\qquad B_b^*g=\sup_{s>0}|B_{s,b}g|.
\]
The one-parameter maximal singular integral estimate for mean-zero
$H^1$ kernels gives
\[
\|B_b^*g\|_{L^p(\mathbb R^m)}
\le C_{p,m}\|b\|_{H^1(S^{m-1})}\|g\|_{L^p(\mathbb R^m)};
\]
see \cite[Introduction]{GraS} and the references there.
Indeed, the smooth truncations are a signed superposition of sharp truncations, since
$\widetilde\chi(r)=\int_{1/2}^1\widetilde\chi'(a)
\mathbf1_{\{r>a\}}\,da$.
The kernel of $T_*^{\alpha,\beta,1}$ has $t/2<|u|<t$ in its
first variable. Polar coordinates and $dr/r\le2\,dr/t$ therefore give
\[
T_*^{\alpha,\beta,1}f(x,y)
\le2\int_{S^{n-1}}
M_{u'}^{\mathbb R^n}\big(B_{b_{u'}}^*f\big)(x,y)\,du',
\]
where $B_{b_{u'}}^*$ acts only in $y$.
Letting the first Poisson parameter tend to zero gives, for almost
every $(u',v')$,
\[
P_{S^{m-1}}^+(b_{u'})(v')
\le P^+(\Omega_{\alpha,\beta})(u',v').
\]
Hence, by integration,
\[
\int_{S^{n-1}}\|b_{u'}\|_{H^1(S^{m-1})}\,du'
\le\|\Omega_{\alpha,\beta}\|_{H^1(S^{n-1}\times S^{m-1})}.
\]
Minkowski's inequality and the uniform $L^p$ bound for $M_{u'}^{\mathbb R^n}$
now imply
\[
\|T_*^{\alpha,\beta,1}f\|_p
\le C_{p,n,m}\|\Omega_{\alpha,\beta}\|_{H^1}\|f\|_p.
\]
The second local term is treated by interchanging the variables.
Since the parity projections contract the product Hardy norm,
\[
\sum_{\alpha,\beta\in\{o,e\}}\sum_{j=1}^2
\|T_*^{\alpha,\beta,j}f\|_p
\le C_{p,n,m}\|\Omega\|_{H^1}\|f\|_p.
\]
Moreover, the last term in \eqref{tomega*} satisfies
\begin{eqnarray*}
	\begin{aligned}
		&\left\|
		\Liint_{S^{n-1}\times S^{m-1}}
		|\Omega_{\alpha,\beta}(u',v')|
		M_{u',v'}(f)
		\,du'dv'
		\right\|_{L^p(\R^n\times\R^m)}
		\\
		&\qquad\le
		C_p
		\|\Omega_{\alpha,\beta}\|_{L^1(S^{n-1}\times S^{m-1})}
		\|f\|_{L^p(\R^n\times\R^m)},
	\end{aligned}
\end{eqnarray*}
since \(M_{u',v'}\) (see \eqref{13}) is dominated by the composition of the two
one-dimensional Hardy--Littlewood maximal operators in the directions
\(u'\) and \(v'\), and hence is uniformly bounded on \(L^p\).

Combining these estimates with \eqref{tomega*}, we conclude that
\[
\|T_{\Omega}^{*}(f)\|_{L^p(\R^n\times\R^m)}
\le
C_{p,n,m}
\|\Omega\|_{H^1(S^{n-1}\times S^{m-1})}
\|f\|_{L^p(\R^n\times\R^m)}.
\]
This proves the estimate for smooth kernels. For general $\Omega$,
take smooth probability rotation averages $\Omega_\varepsilon$.
They preserve separate cancellation, converge in $L^1$, and satisfy
$\|\Omega_\varepsilon\|_{H^1}\le\|\Omega\|_{H^1}$.
For Schwartz $f$, every fixed truncation converges pointwise; taking
the supremum over truncations and applying Fatou's lemma gives the
same maximal estimate for $\Omega$. The principal value distributions
also converge on Schwartz functions by separate cancellation, so the
bound for $T_\Omega$ passes to the limit by duality. Density then gives
both estimates on $L^p$.
\endpf

	\section{Proof of Theorem \ref{thm2}}\label{Marcinkiewicz integrals on product spaces}

We first recall some basic vector-valued Littlewood--Paley estimates. For
\(\sigma\in L^1(\R)\), set
\[
\sigma_t(x)=t^{-1}\sigma(x/t),
\qquad t>0,
\]
and define
\begin{eqnarray}\label{40}
	\left\{
	\begin{array}{l}
		g_{\sigma}(f)(x)
		=
		\left|T_{\sigma}(f)(x)\right|_{\mathcal H},
		\\[4pt]
		T_{\sigma}(f)(x)
		=
		\{\sigma_t*f(x)\}_{t>0}
		=
		K_{\sigma}*f(x),
		\\[4pt]
		{\mathcal H}
		=
		L^2(\R_+,\frac{dt}{t}),
		\qquad
		K_{\sigma}(x)
		=
		\{\sigma_t(x)\}_{t>0}.
	\end{array}
	\right.
\end{eqnarray}
The following is a standard consequence of the vector-valued Calderón--Zygmund
theory; see \cite{BCP,ST0}.

	\begin{Lemma}\label{lemma8}
	If
	\beno
	\|\sigma\|_1<\infty,~~\int_{\R}\sigma(t)~dt=0,~|K_{\sigma}(x)|_{\mathcal H}\le B|x|^{-1},\\
	\int_{|x|>2|y|}\left|K_{\sigma}(x-y)-K_{\sigma}(x)\right|_{\mathcal H}\,dx
\le B\quad(y\ne0),\\
\sup_{\xi\ne0}\int_0^\infty
|\widehat\sigma(t\xi)|^2\,\frac{dt}{t}\le B^2,
	\eeno
	then $\|g_{\sigma}(f)\|_{L^{1,\infty}}\le C\left(\|\sigma\|_1+B\right)\|f\|_1$, $\|g_{\sigma}(f)\|_p\le C_p(\|\sigma\|_1+B)\|f\|_p$, $1<p<+\infty$.
\end{Lemma}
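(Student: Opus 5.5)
The plan is to view $g_\sigma$ as the $\mathcal H$-norm of a vector-valued convolution operator $T_\sigma f=K_\sigma*f$, with $\mathcal H=L^2(\mathbb R_+,dt/t)$. The standard Hilbert-valued Calder\'on--Zygmund theory \cite{BCP,ST0} then applies: it gives an $L^2$ bound, and the H\"ormander condition then yields weak $(1,1)$ and all $L^p$, $1<p<\infty$. Concretely, I would check three things: the $L^2(\mathbb R;\mathcal H)$ boundedness of $T_\sigma$, the size and regularity of the kernel $K_\sigma$, and the passage to $p>2$.

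\textbf{Step 1 ($L^2$).} By Plancherel and Fubini,
\[
\|g_\sigma f\|_2^2=\int_{\mathbb R}|\widehat f(\xi)|^2\int_0^\infty|\widehat\sigma(t\xi)|^2\frac{dt}{t}\,d\xi\le B^2\|f\|_2^2 .
\]
So $\|T_\sigma\|_{L^2\to L^2(\mathcal H)}\le B$.

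\textbf{Step 2 (kernel).} The size bound $|K_\sigma(x)|_{\mathcal H}\le B|x|^{-1}$ and the H\"ormander integral condition with constant $B$ are exactly the hypotheses. One technical point is that $T_\sigma f(x)=K_\sigma*f(x)$, as an $\mathcal H$-valued integral, agrees with the operator from Step 1 for $x\notin\operatorname{supp}f$, $f\in C_c^\infty$. I would verify this by testing against $h\in\mathcal H$ with compact support in $(0,\infty)$. For such $h$, $\int\sigma_t(x-y)h(t)\,dt/t$ is an integrable scalar kernel, since $\|\sigma_t\|_1=\|\sigma\|_1$. Fubini then justifies the identification. This is where $\|\sigma\|_1<\infty$ enters; the cancellation $\int\sigma=0$ is really only needed to make the square-function bound of Step 1 plausible and is already encoded in the Fourier hypothesis.

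\textbf{Step 3 (extrapolation).} The vector-valued Calder\'on--Zygmund decomposition gives the weak $(1,1)$ bound with constant $C(\|\sigma\|_1+B)$. This is the standard argument: the good part is handled by Step 1, and the bad part on the complement of the doubled intervals by the H\"ormander condition. Marcinkiewicz interpolation then gives $1<p\le2$.

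The main obstacle is $p>2$. Here I would use duality. The adjoint $T_\sigma^*$, mapping $L^{p'}(\mathcal H)\to L^{p'}$, has kernel $\widetilde K(x)=K_\sigma(-x)^*$, which satisfies the same size and H\"ormander bounds; this is because $\mathcal H$ is a Hilbert space and the conditions are symmetric under reflection. Applying the $\mathcal H\to\mathbb C$-valued Calder\'on--Zygmund theorem to $T_\sigma^*$ for $1<p'<2$ then gives $T_\sigma:L^p\to L^p(\mathcal H)$ for $2<p<\infty$. All constants are linear in $\|\sigma\|_1+B$ by homogeneity.
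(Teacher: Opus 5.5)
Your proposal is correct and follows the paper's route. The paper obtains the $L^2(\mathbb R;\mathcal H)$ bound from the last hypothesis via Plancherel, then cites the Hilbert-valued Calder\'on--Zygmund theorem of Benedek--Calder\'on--Panzone for weak $(1,1)$ and all $1<p<\infty$. You reproduce the internals of that theorem: identifying the kernel off the support, interpolating for $p\le 2$, and arguing by duality for $p>2$, where the reflected kernel's H\"ormander condition is the hypothesis with $y$ replaced by $-y$.
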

The last hypothesis gives the $L^2$ estimate by Plancherel's theorem;
the kernel estimates then give weak $(1,1)$ and $L^p$ boundedness by
the vector-valued Calder\'on--Zygmund theorem.

We shall also use a product version. For \(\zeta\in L^1(\R\times\R)\), set
\[
\zeta_{t,s}(x,y)
=
t^{-1}s^{-1}\zeta(x/t,y/s),
\qquad t,s>0,
\]
and define
\begin{eqnarray}\label{41}
	\left\{
	\begin{array}{l}
		\widetilde g_{\zeta}(f)(x,y)
		=
		\left|
		\widetilde T_{\zeta}(f)(x,y)
		\right|_{\widetilde{\mathcal H}},
		\\[4pt]
		\widetilde T_{\zeta}(f)(x,y)
		=
		\{\zeta_{t,s}*f(x,y)\}_{t,s>0}
		=
		\widetilde K_{\zeta}*f(x,y),
		\\[4pt]
		\widetilde{\mathcal H}
		=
		L^2((\R_+)^2,\frac{dt}{t}\frac{ds}{s}),
		\qquad
		\widetilde K_{\zeta}(x,y)
		=
		\{\zeta_{t,s}(x,y)\}_{t,s>0}.
	\end{array}
	\right.
\end{eqnarray}

\begin{Lemma}\label{lemma9}
	Suppose that \(\|\zeta\|_1<\infty\), that \(\zeta\) is odd in both variables,
	and that there exists \(\alpha\in(0,1]\) such that
	\[
	\left|
	\widetilde K_{\zeta}(x,y)
	\right|_{\widetilde{\mathcal H}}
	\le
	B|x|^{-1}|y|^{-1},
	\qquad x\neq0,\ y\neq0,
	\]
	\[
	\left|
	\widetilde K_{\zeta}(x+h,y)
	-
	\widetilde K_{\zeta}(x,y)
	\right|_{\widetilde{\mathcal H}}
	\le
	B
	\frac{|h|^\alpha}{|x|^{1+\alpha}}
	|y|^{-1},
	\qquad |x|\ge2|h|,
	\]
	\[
	\left|
	\widetilde K_{\zeta}(x,y+k)
	-
	\widetilde K_{\zeta}(x,y)
	\right|_{\widetilde{\mathcal H}}
	\le
	B
	|x|^{-1}
	\frac{|k|^\alpha}{|y|^{1+\alpha}},
	\qquad |y|\ge2|k|,
	\]
	and
	\[
	\begin{aligned}
		&
		\left|
		\widetilde K_{\zeta}(x+h,y+k)
		-
		\widetilde K_{\zeta}(x+h,y)
		-
		\widetilde K_{\zeta}(x,y+k)
		+
		\widetilde K_{\zeta}(x,y)
		\right|_{\widetilde{\mathcal H}}
		\\
		&\qquad\le
		B
		\frac{|h|^\alpha |k|^\alpha}
		{|x|^{1+\alpha}|y|^{1+\alpha}},
		\qquad
		|x|\ge2|h|,\ |y|\ge2|k|.
	\end{aligned}
	\]
	Then, for \(1<p<\infty\),
	\[
	\|\widetilde g_{\zeta}(f)\|_p
	\le
	C_p(\|\zeta\|_1+B)\|f\|_p.
	\]
\end{Lemma}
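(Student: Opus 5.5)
The plan is to iterate the one-parameter vector-valued Calder\'on--Zygmund theory, one variable at a time. No Fourier hypothesis is assumed in Lemma \ref{lemma9}, so the $L^2$ bound has to come from the kernel estimates together with the odd-odd symmetry of $\zeta$. The natural framework is the product (Journé/Fefferman--Stein type) theory of operators whose one-variable kernels take values in spaces of Calder\'on--Zygmund operators. I would write
\[
\widetilde{\mathcal H}=L^2\bigl(\R_+,\tfrac{dt}{t};\,L^2(\R_+,\tfrac{ds}{s})\bigr)=L^2\bigl(\tfrac{dt}{t};\mathcal H\bigr),
\]
and think of $\widetilde T_\zeta$ as a convolution operator in $x$ whose kernel $x\mapsto \widetilde K_\zeta(x,\cdot)$ takes values in the space of $\mathcal H$-valued convolution operators in $y$.

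\textbf{Step 1 (inner operators).} For fixed $x\ne0$, consider the $y$-convolution operator $U_x g(y)=\int \widetilde K_\zeta(x,y-w)g(w)\,dw$, which maps scalar functions to $\widetilde{\mathcal H}$-valued ones. The size bound and the $k$-smoothness bound show that $|x|\,U_x$ has a one-parameter $\widetilde{\mathcal H}$-valued kernel satisfying the standard size and Hölder conditions with constant $B$, uniformly in $x$. Since $\zeta$ is odd in $y$, this kernel is odd in $y$. Consequently every truncated integral $\int_{a<|y|<b}$ of it vanishes.

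\textbf{Step 2 (one-parameter $L^2$ bound).} By the classical theorem of Stein, a Hilbert-valued kernel with size and Hölder bounds and uniformly bounded truncated integrals defines an $L^2$-bounded operator; the usual proof via Cotlar almost-orthogonality works verbatim in Hilbert space. This gives $\||x|U_x\|_{L^2\to L^2(\widetilde{\mathcal H})}\lesssim B$. The standard vector-valued Calder\'on--Zygmund theorem, exactly as used in Lemma \ref{lemma8}, then upgrades this to $L^p$, $1<p<\infty$, uniformly in $x$. The same argument applies to the differences $U_{x+h}-U_x$. Here the mixed fourth-difference hypothesis provides the Hölder regularity in $y$ of the differenced kernel. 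The result is
\[
\|U_{x+h}-U_x\|_{L^p\to L^p(\widetilde{\mathcal H})}\lesssim B\,|h|^\alpha|x|^{-1-\alpha},\qquad |x|\ge2|h|.
\]

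\textbf{Step 3 (outer operator).} Now $x\mapsto U_x$ is a kernel on $\R$ with values in $\mathcal B(L^p(\R_y),L^p(\R_y;\widetilde{\mathcal H}))$. It has size $B|x|^{-1}$, Hölder regularity of order $\alpha$, and is odd in $x$ because $\zeta$ is odd in $x$. For $p=2$ these operator spaces are Hilbert-type (after Plancherel in $y$ one can realize them as fiberwise multipliers), so Stein's theorem applies again and gives $L^2(\R^2)$ boundedness of $\widetilde T_\zeta$ with constant $\lesssim B$. Alternatively, the $L^2$ bound can be obtained directly from Steps 1--2 by Fubini and Plancherel in $y$ followed by the one-variable Cotlar argument in $x$. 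For $p\neq2$ I would use the Banach-valued Calder\'on--Zygmund theorem, valid for UMD targets such as $L^p(\widetilde{\mathcal H})$, once the $L^2$ bound is in hand. Alternatively one could use Journé's iteration, which only needs the $L^2$ bound plus the vector-valued kernel conditions obtained above. The contribution of $\|\zeta\|_1$ is needed only to make the operators a priori well defined on Schwartz functions and to justify the limiting arguments.

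\textbf{Main obstacle.} The expected main obstacle is the $L^2$ bound in the absence of a Fourier hypothesis. Specifically, one must verify that the odd-odd structure really gives the uniform truncated-integral cancellation, at both levels, that Cotlar's lemma requires. At the outer level, the kernel is operator-valued and the Hölder estimates needed there must be extracted from the mixed difference condition. If the operator-valued Cotlar step proves awkward, I would instead try to prove
\[
\sup_{\xi,\eta}\iint|\widehat\zeta(t\xi,s\eta)|^2\,\frac{dt\,ds}{ts}\lesssim(\|\zeta\|_1+B)^2
\]
directly. The oddness gives $\widehat\zeta(0,\eta)=\widehat\zeta(\xi,0)=0$, and the Hölder hypotheses at $t=s=1$ give decay of $\widehat\zeta$ at infinity in each variable separately.
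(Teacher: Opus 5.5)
Your outline is correct, but you get the $L^2$ bound by a different and heavier route than the paper.

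\textbf{The $L^2$ bound.} The paper uses no smoothness at all for $L^2$. The size hypothesis at the single point $(1,1)$ says exactly $\int_0^\infty\!\int_0^\infty rs|\zeta(r,s)|^2\,dr\,ds=\|\widetilde K_\zeta(1,1)\|_{\widetilde{\mathcal H}}^2\le B^2$. For odd $a$ one has the weighted inequality $\int_0^\infty|\widehat a(\xi)|^2\,\frac{d\xi}{\xi}\le C\int_0^\infty r|a(r)|^2\,dr$. This follows from $\int_0^\infty\sin(r\xi)\sin(q\xi)\,\frac{d\xi}{\xi}=\frac12\log\frac{r+q}{|r-q|}$ and Young's inequality in the variables $r=e^u$, $q=e^v$, against the integrable kernel $\log\coth(|u-v|/2)$. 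Applying this in each variable, using separate oddness and the scale invariance of $dt/t$ and $ds/s$, gives $\sup_{\xi\eta\ne0}\iint|\widehat\zeta(t\xi,s\eta)|^2\,\frac{dt}{t}\frac{ds}{s}\le CB^2$ at once. The $L^p$ bound is then quoted from the Hilbert-valued product Calder\'on--Zygmund theorem.

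Your iterated Stein argument also works. You apply Step 2 in $y$ uniformly in $x$, including the differences via the mixed condition. You then use Plancherel in $y$ and Stein's theorem in $x$ for $x\mapsto\widehat{\widetilde K_\zeta(x,\cdot)}(\eta)$, which is odd, with size $\lesssim B/|x|$ and H\"older constant $\lesssim B$ uniformly in $\eta$. This yields $\sup_{\xi,\eta}|\widehat{\widetilde K_\zeta}(\xi,\eta)|_{\widetilde{\mathcal H}}\lesssim B$.

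\textbf{Comparison.} Your route spends the separate and mixed difference hypotheses already at the $L^2$ level. In exchange, it produces the Calder\'on--Zygmund-operator-valued kernel bounds that the product theory needs anyway. The paper's route is shorter and shows that size plus oddness alone give the $L^2$ bound.

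Your fallback via pointwise decay of $\widehat\zeta$ would also need quantitative vanishing near the axes, not just $\widehat\zeta(0,\eta)=\widehat\zeta(\xi,0)=0$. The weighted $L^2$ inequality avoids pointwise Fourier estimates altogether.

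\textbf{The case $p\neq2$.} Your first suggestion does not close as stated. Treat $\widetilde T_\zeta$ as an $x$-convolution with a $\mathcal B(L^p(\R_y),L^p(\R_y;\widetilde{\mathcal H}))$-valued kernel. Applying the Benedek--Calder\'on--Panzone theorem then requires an a priori bound on some $L^q(\R_x;L^p(\R_y))$. The $L^2(\R^2)$ bound does not provide this when $p\neq2$. The UMD property by itself does not replace it; you would need $R$-boundedness arguments.

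Your alternative is the right tool: Journ\'e's product theory, fed with the $L^2$ bound, the operator-valued kernel estimates of Step 2, and their analogues in the other variable. This is essentially what the paper cites.
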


{\bf Proof:}
For an odd function $a\in L^1(\mathbb R)$, we first note that
\[
\int_0^\infty |\widehat a(\xi)|^2\,\frac{d\xi}{\xi}
\le C\int_0^\infty r|a(r)|^2\,dr.
\]
For smooth $a$ compactly supported away from zero, this follows from
the identity
\[
\int_0^\infty\sin(r\xi)\sin(q\xi)\,\frac{d\xi}{\xi}
=\frac12\log\frac{r+q}{|r-q|},\qquad r,q>0,\quad r\ne q,
\]
interpreted as an Abel limit. Indeed, setting $r=e^u$ and $q=e^v$
reduces the estimate to Young's inequality for $e^ua(e^u)$ and the
integrable kernel $\log\coth(|u-v|/2)$. Approximation gives the
estimate whenever its right-hand side is finite.

The size assumption gives
\[
\int_0^\infty\int_0^\infty rs|\zeta(r,s)|^2\,dr\,ds
=\|\widetilde K_\zeta(1,1)\|_{\widetilde{\mathcal H}}^2
\le B^2.
\]
Applying the preceding estimate in each variable, using the separate
oddness of $\zeta$, and rescaling the frequencies, we obtain
\[
\sup_{\xi\eta\ne0}\int_0^\infty\int_0^\infty
|\widehat\zeta(t\xi,s\eta)|^2\,\frac{dt}{t}\frac{ds}{s}
\le CB^2.
\]
Plancherel's theorem therefore gives the $L^2$ bound. The asserted
$L^p$ estimate now follows from the Hilbert-valued product
Calder\'on--Zygmund theorem, using the preceding $L^2$ bound together with
the stated size, separate difference, and mixed difference estimates; see
\cite{FS2,Fernandez1987}.
\endpf

	Now, for the Marcinkiewicz integral on \(\R^n\times\R^m\), we have
	\[
	\mu_{\Omega}(f)
	=
	\widetilde g_{\psi^{\Omega}}(f),
	\]
	see \eqref{marcinkiewicz} and \eqref{4} for the definitions.  By the parity decomposition, it suffices to prove the estimate for each parity
	component separately.  We present the proof in the even-even case, which is the
	only case requiring Riesz transforms in both variables. The remaining parity
	components are treated in the same way, with fewer transformed variables.

	 Since in the present argument \(\Omega\) is even in both variables, we use the
	 following local notation for the fully and partially transformed spherical
	 functions:
\[	\omega(x',y')
	=
	\widetilde R_{S^{n-1}}
	\otimes
	\widetilde R_{S^{m-1}}(\Omega)(x',y'),
	\]
	\[
	\Omega'(x',y')
	=
	\widetilde R_{S^{n-1}}(\Omega(\cdot,y'))(x'),
	\qquad
	\Omega''(x',y')
	=
	\widetilde R_{S^{m-1}}(\Omega(x',\cdot))(y').
	\]

	Let
	\ba\label{decomp:chi}
	\left\{
	\begin{array}{l}
		\chi(t)=\chi_{(0,1)}(t),\\
		\lambda\in C_c^\infty((0,+\infty)),
		\qquad
		\supp\lambda\subset[1,2],
		\qquad
		\int_0^\infty\lambda(t)\,dt=1,\\
		\rho(t)=\chi(t)-\lambda(t).
	\end{array}
	\right.
	\ea

	For smooth \(\Omega\), \(u'\in S^{n-1}\), and
	\(v'\in S^{m-1}\), define
	\ba\label{42}
	\left\{
	\begin{array}{l}
		\sigma^{(1,u',v')}(t)
		=
		|t|^{n-1}
		{\mathfrak R}_{\R^n}
		\left(
		\lambda(|\cdot|)|\cdot|^{1-n}
		\Omega(\cdot,v')
		\right)(tu'),
		\\[6pt]
		\sigma^{(2,u',v')}(s)
		=
		|s|^{m-1}
		{\mathfrak R}_{\R^m}
		\left(
		\lambda(|\circ|)|\circ|^{1-m}
		\Omega(u',\circ)
		\right)(sv'),
		\\[6pt]
		\zeta^{(u',v')}(t,s)
		=
		|t|^{n-1}|s|^{m-1}
		\left(
		{\mathfrak R}_{\R^n}
		\otimes
		{\mathfrak R}_{\R^m}
		\right)
		\left(
		\frac{\lambda(|\cdot|)}{|\cdot|^{n-1}}
		\frac{\lambda(|\circ|)}{|\circ|^{m-1}}
		\Omega(\cdot,\circ)
		\right)(tu',sv').
	\end{array}
	\right.
	\ea
	For general \(\Omega\), these profiles are understood through the
	almost-everywhere radial representatives constructed in Lemma \ref{lemma10}.
	Then \(\sigma^{(1,u',v')}\) is odd in \(t\),
	\(\sigma^{(2,u',v')}\) is odd in \(s\), and
	\(\zeta^{(u',v')}\) is odd both in \(t\) and in \(s\).
	Together with the integrable radial bounds in Lemma \ref{lemma10}, this
	gives, for almost every \((u',v')\),
	\[
	\int_{\R}\sigma^{(1,u',v')}(t)\,dt=0,
	\qquad
	\int_{\R}\sigma^{(2,u',v')}(s)\,ds=0,
	\]
	and
	\[
	\int_{\R}
	\zeta^{(u',v')}(t,s)\,dt=0,\quad(\forall s\in\R),\quad\int_{\R}\zeta^{(u',v')}(t,s)\,ds=0,\quad(\forall t\in\R).
	\]
	Thus \(\sigma^{(1,u',v')}\) is \(\mathbb C^n\)-valued,
	\(\sigma^{(2,u',v')}\) is \(\mathbb C^m\)-valued, and
	\(\zeta^{(u',v')}\) is \(\mathbb C^n\otimes\mathbb C^m\)-valued. Their
	products with
	\[
	\mathfrak R_{\mathbb R^n}f,\qquad
	\mathfrak R_{\mathbb R^m}f,\qquad
	(\mathfrak R_{\mathbb R^n}\otimes\mathfrak R_{\mathbb R^m})f
	\]
	below are understood through the corresponding Euclidean or Hilbert--Schmidt
	contractions.

	Let
	\begin{eqnarray*}
		\Omega_{k}^{*}(x',y')
		&=&
		\sup_{r\in\R,\ j=0,1}
		(1+|r|)^{2+j}
		\left|
		\partial_{r}^{j}
		\left(
		\sigma^{(k,x',y')}(r)
		\right)
		\right|,
		\qquad k=1,2,
		\\
		\Omega^{*}(x',y')
		&=&
		\sup_{t,s\in\R,\ i,j=0,1}
		(1+|t|)^{2+j}
		(1+|s|)^{2+i}
		\left|
		\partial_t^j\partial_s^i
		\left(
		\zeta^{(x',y')}(t,s)
		\right)
		\right|.
	\end{eqnarray*}
Here the absolute values are the Euclidean norms for
\(\sigma^{(1,u',v')}\) and \(\sigma^{(2,u',v')}\), and the Hilbert--Schmidt
norm for \(\zeta^{(u',v')}\).
	The following lemma holds.

	\begin{Lemma}\label{lemma10}
		For
		\[
		\Omega\in H^1(S^{n-1}\times S^{m-1})\wedge \eqref{cancellation},
		\]
		which is even both in the first and in the second variables, there holds
		\[
		\left\|
		\sum_{k=1}^{2}|\Omega_k^*|
		+
		|\Omega^*|
		\right\|_{L^1(S^{n-1}\times S^{m-1})}
		\le
		C_{n,m}
		\|\Omega\|_{H^1(S^{n-1}\times S^{m-1})}.
		\]
	\end{Lemma}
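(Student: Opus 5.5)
The plan is to reduce every radial profile in \eqref{42} to two kinds of pieces. The first kind is a \emph{main term}, which contains the restricted Riesz transforms $\Omega'$, $\Omega''$, $\omega$; these are controlled in $L^1$ by Lemma \ref{lemma7}. The second kind is a \emph{commutator} or \emph{tail term}, which contains only $|\Omega|$ (or $|\Omega'|$, $|\Omega''|$ in the product case) against kernels whose angular integrals are uniformly bounded. I first treat the one-variable profile $\sigma^{(1,u',v')}$ with $v'$ frozen, and then iterate.

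Fix $v'$ and write $b=\Omega(\cdot,v')$. This function is even and has mean zero on $S^{n-1}$ for a.e.\ $v'$. Put $\psi(r)=r\lambda(r)$, so that $\lambda(|x|)|x|^{1-n}b(x')=\psi(|x|)\,b(x')|x|^{-n}$. Then
\[
\mathfrak R_{\mathbb R^n}\big(\psi(|\cdot|)b(\cdot')|\cdot|^{-n}\big)(x)=\psi(|x|)\frac{\widetilde R_{S^{n-1}}b(x')}{|x|^n}+\int_{\mathbb R^n}K_n(x-z)\big(\psi(|z|)-\psi(|x|)\big)\frac{b(z')}{|z|^n}\,dz ,
\]
where the first term comes from \eqref{def:tildeR:dist}, with no $\delta_0$ term since $b$ is even.

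The main term lives on $1\le|t|\le2$, and there its contribution to $\sup_t(1+|t|)^2|\cdot|$ is $\lesssim|\Omega'(u',v')|$. For the commutator term I would split into three regions:
\begin{itemize}
\item For $|x|\ge 4$: $\psi(|x|)=0$ and $|z|\le2$, so the cancellation of $b$ lets me replace $K_n(x-z)$ by $K_n(x-z)-K_n(x)$. By \eqref{est:E2} this gives a bound $C|t|^{-n-1}\|b\|_{L^1(S^{n-1})}$, hence $|t|^{n-1}\cdot|t|^{-n-1}=|t|^{-2}$ decay.
\item For $|x|\le1/2$ the kernel is smooth and the bound is $\lesssim\|b\|_1$.
\item For $1/2\le|x|\le4$ I use $|\psi(|z|)-\psi(|x|)|\lesssim|x-z|$. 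This gives the bound $\int_{S^{n-1}}|b(z')|\int_{1}^{2}(|t-r|+|u'-z'|)^{1-n}\,dr\,dz'\lesssim\int_{S^{n-1}}|b(z')|\,k(u'\cdot z')\,dz'$, with $k\sim|u'-z'|^{2-n}$ for $n\ge3$ and $k\sim\log(2/|u'-z'|)$ for $n=2$. This holds uniformly in $t$, and $k$ is integrable on $S^{n-1}$, so after integrating in $u'$ the contribution is $\lesssim\|b\|_{L^1(S^{n-1})}$.
\end{itemize}

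For the derivative $j=1$ in the definition of $\Omega_1^*$ I would use the dilation structure. Since $\mathfrak R_{\mathbb R^n}$ is homogeneous of degree $0$, it commutes with the Euler operator $x\cdot\nabla$. Hence
\[
t\,\partial_t\big[\mathfrak R_{\mathbb R^n}g(tu')\big]=\mathfrak R_{\mathbb R^n}\big((x\cdot\nabla)g\big)(tu'),
\]
and $(x\cdot\nabla)g=\widetilde\psi(|x|)b(x')|x|^{-n}$ with the new smooth profile $\widetilde\psi(r)=r\psi'(r)-n\psi(r)$, still supported in $[1,2]$. Applying the previous paragraph to $\widetilde\psi$ gives the extra factor of $(1+|t|)^{-1}$ (the far-region estimate improves to $|t|^{-n-2}$ by using one more derivative of $K_n$). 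Thus $\|\Omega_1^*\|_{L^1}\lesssim\|\Omega'\|_1+\|\Omega\|_1$, and symmetrically $\|\Omega_2^*\|_{L^1}\lesssim\|\Omega''\|_1+\|\Omega\|_1$.

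For $\Omega^*$ I would apply the same decomposition in both variables at once. The tensor $(\mathfrak R_{\mathbb R^n}\otimes\mathfrak R_{\mathbb R^m})$ applied to $\psi(|x|)\phi(|y|)\Omega(x',y')|x|^{-n}|y|^{-m}$ splits into four terms, according to main or commutator in each variable:
\begin{itemize}
\item main--main gives $\psi\phi\,\omega$;
\item main in $y$, commutator in $x$ gives a commutator in $x$ acting on $\Omega''$;
\item main in $x$, commutator in $y$ gives a commutator in $y$ acting on $\Omega'$;
\item commutator--commutator acts on $\Omega$ itself.
\end{itemize}
The far and near regions use the separate cancellation \eqref{cancellation}, which is also inherited by $\Omega'$ and $\Omega''$ in the untransformed variable. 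Mixed derivatives $\partial_t^j\partial_s^i$ are handled by the two commuting Euler operators. The result is $\|\Omega^*\|_1\lesssim\|\omega\|_1+\|\Omega'\|_1+\|\Omega''\|_1+\|\Omega\|_1\lesssim\|\Omega\|_{H^1}$ by Lemma \ref{lemma7}. All of this is for smooth $\Omega$; for general $\Omega$ I would approximate by the rotation averages $\Omega_\varepsilon$ used at the end of Section \ref{Singular Integrals on Product spaces}, whose $H^1$ norms stay bounded. The profiles converge a.e.\ along a subsequence, which defines the radial representatives, and Fatou's lemma preserves the bound.

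The main obstacle is the near-annulus commutator region. There the supremum over $t$ must be taken \emph{before} integrating over $u'$, so the argument relies on the commutator kernel gaining a full power $|x-z|^{1-n}$ rather than $|x-z|^{-n}$. For the derivative term this gain is secured by the Euler-operator commutation above rather than by differentiating $K_n$ directly. I would check carefully that the radial integral really produces the integrable angular kernel $k$, including the logarithmic case $n=2$ (or $m=2$).
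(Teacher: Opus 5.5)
Your argument follows the paper's proof closely. After the scaling $z\mapsto tz$, your splitting $\psi(|z|)=\psi(|x|)+[\psi(|z|)-\psi(|x|)]$ at $x=tu'$ is exactly the paper's $b_j(t|z|)=b_j(t)+[b_j(t|z|)-b_j(t)]$, with $b_0=\psi$. Your Euler identity gives $\partial_t\sigma=t^{-1}\bigl((n-1)\sigma+\widetilde\sigma\bigr)$ with $(n-1)\psi+\widetilde\psi=r^2\lambda'=b_1$, which is the paper's derivative formula. The three $t$-ranges, the angular kernel $J_l$, and the four-term tensor expansion controlled by $|\omega|$, $\mathcal B_m(|\Omega'|)$, $\mathcal B_n(|\Omega''|)$ and $\mathcal B_n\mathcal B_m(|\Omega|)$ also match. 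Two steps, however, are not correct as written.

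\emph{The near-annulus commutator.} For $1/2\le|x|\le4$ the commutator is not supported in $1\le|z|\le2$. Whenever $\psi(|x|)\neq0$, the factor $\psi(|z|)-\psi(|x|)=-\psi(|x|)$ survives for every $|z|\notin[1,2]$. Your displayed bound with $\int_1^2 dr$ therefore omits these pieces. The Lipschitz bound cannot control them near $z=0$: there $|K_n(x-z)|\,|z|^{-n}\sim|z|^{-n}$, which produces a divergent $\int_0 dr/r$.

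On $|z|<1/4$ you must use $\int_{S^{n-1}}b=0$ again and subtract $\mathbf 1_{\{|z|<1/4\}}K_n(x)$. This is the term $\mathbf 1_{(0,1/4)}(r)K_l(\theta)$ in the paper's $e^{(j)}_{l,t}$, and it leaves a bounded kernel. The part $|z|>8$ is bounded by size alone. Only the range $1/4\le|z|\le8$ produces the $J_n$-type kernel.

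\emph{The limiting argument.} Uniformly bounded $\|\Omega_\varepsilon\|_{H^1}$ does not by itself make the profiles converge almost everywhere, so Fatou has nothing to act on yet. The profiles depend linearly on $\Omega$, so apply your smooth estimate
$\|\Omega^*\|_1+\sum_k\|\Omega_k^*\|_1\lesssim\|\Omega\|_1+\|\Omega'\|_1+\|\Omega''\|_1+\|\omega\|_1$
to the differences $\Omega_{\nu+1}-\Omega_\nu$ of a sequence of rotation averages.

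Rotation averages commute with $\widetilde R_{S^{l-1}}$, so $\Omega_\nu'$, $\Omega_\nu''$ and $\omega_\nu$ converge in $L^1$, not merely stay bounded. Pass to a subsequence with summable differences. Tonelli then gives uniform convergence of the weighted profiles and their derivatives outside a single null set of pairs $(u',v')$. This is what defines the radial representatives, and only then does Fatou's lemma yield the bound.
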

	{\bf Proof:}
	We first assume that \(\Omega\) is smooth. We begin with a one-parameter
	estimate which will be used in each spherical variable.

	Let \(a\) be a smooth mean-zero function on \(S^{l-1}\), \(l\ge2\), and set
\begin{eqnarray*}
	V_{l,t}a(\theta)
	&=&
	t^{l-1}{\mathfrak R}_{\mathbb R^l}
	\big(
	\lambda(|\cdot|)|\cdot|^{1-l}a(\cdot)
	\big)(t\theta),\qquad t>0.
\end{eqnarray*}
	For \(j=0,1\), write
	\[
	U_{l,t}^{(j)}a
	=
	(1+t)^{2+j}\partial_t^jV_{l,t}a.
	\]

	We first consider \(1/2\le t\le4\). 
	We write
	\[V_{l,t}a(\theta)=\operatorname{p.v.}\int_{\R^l}K_l(\theta-z)\frac{a(z')}{|z|^l}{\lambda(t|z|)}|z|\,dz.\]
	Put $
	b_j(r)=r^{j+1}\lambda^{(j)}(r).
$
	By scaling the Riesz kernel and differentiating in \(t\), we obtain
	\begin{equation}\label{43}
		\partial_t^jV_{l,t}a(\theta)
		=
		t^{-1-j}\,\operatorname{p.v.}\int_{\mathbb R^l}
		K_l(\theta-z)
		\frac{a(z')}{|z|^l}
		b_j(t|z|)\,dz .
	\end{equation}
	We split
	\[
	b_j(t|z|)
	=
	b_j(t)
	+
	\big[b_j(t|z|)-b_j(t)\big].
	\]
	The first term gives the restricted Riesz transform. Hence
	\begin{equation}\label{eq:radial-profile-decomposition}
		U_{l,t}^{(j)}a
		=
		c_j(t)\widetilde R_{S^{l-1}}a
		+
		E_{l,t}^{(j)}a,
	\end{equation}
	where
	\[
	c_j(t)
	=
	(1+t)^{2+j}t^{-1-j}b_j(t).
	\]
	Since \(1/2\le t\le4\), the function \(c_j(t)\) is uniformly bounded.

	It remains to estimate the error term. Its angular kernel is
	\[
	e_{l,t}^{(j)}(\theta,\eta)
	=
	(1+t)^{2+j}t^{-1-j}
	\int_0^\infty
	\big[
	K_l(\theta-r\eta)
	-
	\mathbf 1_{(0,1/4)}(r)K_l(\theta)
	\big]
	\big[
	b_j(tr)-b_j(t)
	\big]
	\frac{dr}{r}.
	\]
	The subtraction of \(K_l(\theta)\) is allowed since \(\int a=0\).
	For \(r<1/4\) and \(r>4\), the kernel is uniformly bounded. Thus it is enough
	to consider \(1/2\le r\le2\). If
	\[
	d=|\theta-\eta|,
	\]
	then
	\[
	|\theta-r\eta|^2
	=
	(1-r)^2+rd^2,
	\]
	and, by the smoothness of \(b_j\),
	\[
	|b_j(tr)-b_j(t)|
	\le
	C|r-1|.
	\]
	Therefore
	\[
	|e_{l,t}^{(j)}(\theta,\eta)|
	\le
	C\left(
	1+
	\int_{1/2}^2
	\frac{|r-1|}
	{\big((r-1)^2+d^2\big)^{l/2}}
	\,dr
	\right).
	\]
	Consequently,
	\[
	|e_{l,t}^{(j)}(\theta,\eta)|
	\le
	CJ_l(d),
	\]
	where
	\[
	J_l(d)
	=
	\begin{cases}
		1+\log^+(1/d),& l=2,\\[3pt]
		1+d^{2-l},& l\ge3.
	\end{cases}
	\]
	Since \(J_l(|\theta-\eta|)\) is integrable on \(S^{l-1}\), uniformly in
	\(\theta\), we have
	\begin{equation}\label{eq:angular-remainder-L1}
		|E_{l,t}^{(j)}a(\theta)|
		\le
		C\mathcal B_l(|a|)(\theta),
		\qquad
		\|\mathcal B_l b\|_1
		\le
		C_l\|b\|_1,
	\end{equation}
	where
	\[
	\mathcal B_l b(\theta)
	=
	\int_{S^{l-1}}
	J_l(|\theta-\eta|)b(\eta)\,d\eta .
	\]

	We next consider the ranges \(t\le1/2\) and \(t\ge4\). If
	\(0<t\le1/2\), then the support of \(\lambda\) gives
	\[
	|t\theta-z|\ge\frac12.
	\]
	Thus direct differentiation gives
	\[
	|\partial_t^jV_{l,t}a|
	\le
	Ct^{l-1-j}\|a\|_1,
	\qquad j=0,1.
	\]
	If \(a\) is even, then
	\[
	F_a(x):=\mathfrak R_{\mathbb R^l}
	\bigl(\lambda(|\cdot|)|\cdot|^{1-l}a\bigr)(x)
	\]
	is odd, so \(F_a(0)=0\). Since $\lambda$ is supported in
	\(1\le |z|\le2\), differentiation under the integral gives
	\[
	\sup_{|x|\le1/2}|\nabla F_a(x)|\le C\|a\|_1,
	\qquad |F_a(t\theta)|\le Ct\|a\|_1.
	\]
	Using \(V_{l,t}a(\theta)=t^{l-1}F_a(t\theta)\) and differentiating,
	we obtain
	\[
	|\partial_t^jV_{l,t}a(\theta)|
	\le Ct^{l-j}\|a\|_1,
	\qquad j=0,1,\quad 0<t<1/2.
	\]
	Since \(l\ge2\), its odd extension is \(C^1\) at zero,
	with value and derivative both zero.

	If \(t\ge4\), we use the cancellation of \(a\). Since
	\(1\le |z|\le2\) on the support of \(\lambda\),
	\[
	|\nabla^hK_l(t\theta-z)-\nabla^hK_l(t\theta)|
	\le
	C|z|t^{-l-h-1},
	\qquad h=0,1.
	\]
	It follows that
	\[
	|\partial_t^jV_{l,t}a|
	\le
	Ct^{-2-j}\|a\|_1,
	\qquad j=0,1.
	\]
	Thus \eqref{eq:radial-profile-decomposition} and
	\eqref{eq:angular-remainder-L1} hold for every \(t>0\), with
	\(|c_j(t)|\le C\). For \(t\notin[1/2,4]\), we simply take \(c_j(t)=0\).

	We now apply this one-parameter estimate to the functions in \eqref{42}.
	For \(t,s>0\),
	\[
	(1+t)^{2+j}(1+s)^{2+i}
	\partial_t^j\partial_s^i
	\zeta^{(u',v')}(t,s)
	=
	\big(
	U_{n,t}^{(j)}
	\otimes
	U_{m,s}^{(i)}
	\big)\Omega(u',v').
	\]
	Using \eqref{eq:radial-profile-decomposition} in both variables, we obtain four
	types of terms. The term containing two restricted Riesz transforms is
	controlled by \(|\omega|\). The two terms containing one restricted Riesz
	transform and one error term are controlled by
	\(\mathcal B_m(|\Omega'|)\) and
	\(\mathcal B_n(|\Omega''|)\), respectively. The remaining term is controlled
	by
	\(\mathcal B_n\mathcal B_m(|\Omega|)\). Hence
	\begin{equation}\label{eq:Omega-star-pointwise}
		\Omega^*
		\le
		C\Big(
		|\omega|
		+
		\mathcal B_m(|\Omega'|)
		+
		\mathcal B_n(|\Omega''|)
		+
		\mathcal B_n\mathcal B_m(|\Omega|)
		\Big).
	\end{equation}
	In the same way,
	\[
	\Omega_1^*
	\le
	C\Big(
	|\Omega'|+\mathcal B_n(|\Omega|)
	\Big),
	\]
	and
	\[
	\Omega_2^*
	\le
	C\Big(
	|\Omega''|+\mathcal B_m(|\Omega|)
	\Big).
	\]
	The estimates near \(t=0\) obtained above show that the same bounds hold for
	the odd extensions to all real parameters.

	Integrating these inequalities and using
	\eqref{eq:angular-remainder-L1}, we obtain
	\[
	\|\Omega_1^*\|_1
	+
	\|\Omega_2^*\|_1
	+
	\|\Omega^*\|_1
	\le
	C\Big(
	\|\Omega\|_1
	+
	\|\Omega'\|_1
	+
	\|\Omega''\|_1
	+
	\|\omega\|_1
	\Big).
	\]
	By Lemma \ref{lemma7},
	\[
	\|\Omega\|_1
	+
	\|\Omega'\|_1
	+
	\|\Omega''\|_1
	+
	\|\omega\|_1
	\le
	C
	\|\Omega\|_{H^1(S^{n-1}\times S^{m-1})}.
	\]
	This proves the desired estimate for smooth \(\Omega\).

	Finally, let \(\Omega\) be general. Choose smooth rotation averages
	\(\Omega_\nu\) which preserve the evenness and the separate cancellation
	conditions. By the rotation covariance of the restricted Riesz transforms,
	\[
	\Omega_\nu\to\Omega,\qquad
	\Omega_\nu'\to\Omega',\qquad
	\Omega_\nu''\to\Omega'',\qquad
	\omega_{\Omega_\nu}\to\omega
	\]
	in \(L^1\). Choose a subsequence for which the sum of the four $L^1$-norms
	of successive differences is finite. Applying the smooth estimate
	to these differences and using Tonelli's theorem gives, outside
	one null set of angular pairs, uniform convergence of the profiles
	and all derivatives occurring in the defining suprema, with their
	radial weights. The limits retain these derivatives and agree
	with \eqref{42} by distributional convergence. Fatou's lemma
	therefore gives
	\[
	\|\Omega_1^*\|_1
	+
	\|\Omega_2^*\|_1
	+
	\|\Omega^*\|_1
	\le
	C
	\|\Omega\|_{H^1(S^{n-1}\times S^{m-1})}.
	\]
	This completes the proof.
	\endpf

	We now prove Theorem \ref{thm2} in the even-even case. By the decomposition
	\[
	\chi=\rho+\lambda,
	\]
	the evenness of \(\Omega\) in both variables, and the definitions in
	\eqref{42}, we obtain
	\begin{eqnarray}\label{decomp:marcinkernel}
	&&	\psi_{t,s}^{\Omega}* f(x,y)\\
		&=&
		\frac14
		\Liint_{S^{n-1}\times S^{m-1}}
		\Omega(u',v')
		\Liint_{\R^2}
		\rho_t(t')\rho_s(s')
		f(x-t'u',y-s'v')
		\,dt'ds'\,du'dv'
		\nonumber\\
		&&
		-\frac14
		\Liint_{S^{n-1}\times S^{m-1}}
		\Liint_{\R^2}
		\sigma_t^{(1,u',v')}(t')\cdot
		{\mathfrak R}_{\R^n}f(x-t'u',y-s'v')
		\rho_s(s')
		\,dt'ds'\,du'dv'
		\nonumber\\
		&&
		-\frac14
		\Liint_{S^{n-1}\times S^{m-1}}
		\Liint_{\R^2}
		\rho_t(t')
		\sigma_s^{(2,u',v')}(s')\cdot
		{\mathfrak R}_{\R^m}f(x-t'u',y-s'v')
		\,dt'ds'\,du'dv'
		\nonumber\\
		&&
		+\frac14
		\Liint_{S^{n-1}\times S^{m-1}}
		\Liint_{\R^2}
		\zeta_{t,s}^{(u',v')}(t',s')
		\cdot
		\left(
		{\mathfrak R}_{\R^n}\otimes{\mathfrak R}_{\R^m}
		\right)f(x-t'u',y-s'v')
		\,dt'ds'\,du'dv' .
	\end{eqnarray}
	Here the dot denotes the natural vector or tensor contraction. The factor
	\(1/4\) comes from extending the two positive radial variables to the whole
	real line and using the evenness of \(\Omega\), while the signs follow from
$
	\mathfrak R_{\R^l}\cdot\mathfrak R_{\R^l}=-Id,
	\, l=n,m.
$

	For fixed \(u'\in S^{n-1}\) and \(v'\in S^{m-1}\), write
$$
	\R^n=L(u')\oplus L(u')^\perp,
	\qquad
	\R^m=L(v')\oplus L(v')^\perp.
$$
	Thus
	\[
	x=x_{u'}u'+x^\perp,
	\qquad
	y=y_{v'}v'+y^\perp,
	\]
	where \(x_{u'},y_{v'}\in\R\), \(x^\perp\in L(u')^\perp\), and
	\(y^\perp\in L(v')^\perp\). Define the slice
	\[
	f_{x^\perp,y^\perp}^{u',v'}(a,b)
	=
	f(x^\perp+au',\,y^\perp+bv'),
	\qquad (a,b)\in\R^2.
	\]
	We let \(K_\rho'\) and \(K_\rho''\) denote the vector-valued kernels \(K_\rho\)
	acting on the first and second one-dimensional variables, respectively.
	Similarly, \(K_{\sigma^{(1,u',v')}}'\) acts on the first variable and
	\(K_{\sigma^{(2,u',v')}}''\) acts on the second variable.

	Taking the \(\widetilde{\mathcal H}\)-norm in \eqref{decomp:marcinkernel} and
	using the triangle inequality, we get
	\begin{eqnarray}\label{45}
	&&	\mu_\Omega(f)(x,y)\nonumber\\
		&\le&
		\frac14
		\Liint_{S^{n-1}\times S^{m-1}}
		|\Omega(u',v')|
		\left|
		K_\rho'K_\rho''
		*
		f_{x^\perp,y^\perp}^{u',v'}
		(x_{u'},y_{v'})
		\right|_{\widetilde{\mathcal H}}
		\,du'dv'
		\nonumber\\
		&&+
		\frac14
		\Liint_{S^{n-1}\times S^{m-1}}
		\left|
		K_{\sigma^{(1,u',v')}}'K_\rho''
		*
		({\mathfrak R}_{\R^n}f)_{x^\perp,y^\perp}^{u',v'}
		(x_{u'},y_{v'})
		\right|_{\widetilde{\mathcal H}}
		\,du'dv'
		\nonumber\\
		&&+
		\frac14
		\Liint_{S^{n-1}\times S^{m-1}}
		\left|
		K_\rho'K_{\sigma^{(2,u',v')}}''
		*
		({\mathfrak R}_{\R^m}f)_{x^\perp,y^\perp}^{u',v'}
		(x_{u'},y_{v'})
		\right|_{\widetilde{\mathcal H}}
		\,du'dv'
		\nonumber\\
		&&+
		\frac14
		\Liint_{S^{n-1}\times S^{m-1}}
		\left|
		\widetilde K_{\zeta^{(u',v')}}
		*
		\big(
		({\mathfrak R}_{\R^n}\otimes{\mathfrak R}_{\R^m})f
		\big)_{x^\perp,y^\perp}^{u',v'}
		(x_{u'},y_{v'})
		\right|_{\widetilde{\mathcal H}}
		\,du'dv' .
	\end{eqnarray}

For a one-dimensional kernel family \(K_\sigma\), we write
\(T_\sigma'\) and \(T_\sigma''\) for the corresponding vector-valued operators
acting on the first and second variables, respectively. Thus, for
\(F\in L^p(\R^2)\),
\[
T_\sigma'F(a,b)
=
\left\{
\int_{\R}\sigma_t(a-\alpha)F(\alpha,b)\,d\alpha
\right\}_{t>0},
\]
and
\[
T_\sigma''F(a,b)
=
\left\{
\int_{\R}\sigma_t(b-\beta)F(a,\beta)\,d\beta
\right\}_{t>0}.
\]
Similarly,
\[
K_{\sigma_1}'K_{\sigma_2}''*F(a,b)
=
\left\{
\iint_{\R^2}
(\sigma_1)_t(a-\alpha)(\sigma_2)_s(b-\beta)
F(\alpha,\beta)\,d\alpha d\beta
\right\}_{t,s>0}.
\]

We shall use the following lemma.

\begin{Lemma}\label{lemma11}
	Let \(1<p<\infty\). For almost every fixed pair
	\((u',v')\in S^{n-1}\times S^{m-1}\), the following estimates hold:
	\[
	\left\|
	K_\rho'K_\rho''*F
	\right\|_{L^p(\R^2;\widetilde{\mathcal H})}
	\le
	C_p\|F\|_{L^p(\R^2)},
	\]
	\[
	\left\|
	K_{\sigma^{(1,u',v')}}'K_\rho''*F
	\right\|_{L^p(\R^2;\widetilde{\mathcal H})}
	\le
	C_p\,\Omega_1^*(u',v')\,
	\|F\|_{L^p(\R^2)},
	\]
	\[
	\left\|
	K_\rho'K_{\sigma^{(2,u',v')}}''*F
	\right\|_{L^p(\R^2;\widetilde{\mathcal H})}
	\le
	C_p\,\Omega_2^*(u',v')\,
	\|F\|_{L^p(\R^2)},
	\]
	and
	\[
	\left\|
	\widetilde T_{\zeta^{(u',v')}}F
	\right\|_{L^p(\R^2;\widetilde{\mathcal H})}
	\le
	C_p\,\Omega^*(u',v')\,
	\|F\|_{L^p(\R^2)}.
	\]
	The estimates are understood componentwise in the finite-dimensional
	vector- or tensor-valued cases.
\end{Lemma}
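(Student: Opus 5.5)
The plan is to reduce all four estimates to the one-parameter Lemma \ref{lemma8} and the product Lemma \ref{lemma9}. The constants are pulled out by homogeneity: for fixed $(u',v')$ we normalize
\[
\sigma^{(1,u',v')}/\Omega_1^*(u',v'),\qquad \sigma^{(2,u',v')}/\Omega_2^*(u',v'),\qquad \zeta^{(u',v')}/\Omega^*(u',v').
\]
By Lemma \ref{lemma10}, $\Omega_1^*,\Omega_2^*,\Omega^*$ are integrable, hence finite for almost every $(u',v')$; if one of them vanishes, the corresponding profile is zero and there is nothing to prove. It then suffices to prove each bound with constant $C_p$ for normalized profiles. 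Vector- and tensor-valued profiles are handled componentwise: each component satisfies the same pointwise bounds, and there are finitely many components.

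\textbf{Step 1: the one-dimensional kernels.} We check the hypotheses of Lemma \ref{lemma8} for $\rho$ and for each normalized scalar component $\sigma$ of $\sigma^{(k,u',v')}$.

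For $\rho=\chi_{(0,1)}-\lambda$: it is bounded, supported in $[0,2]$, has integral zero, and is of bounded variation. Hence $|\widehat\rho(\xi)|\lesssim\min(|\xi|,|\xi|^{-1})$, which gives the $L^2$ hypothesis. The size bound $|K_\rho(x)|_{\mathcal H}\lesssim|x|^{-1}$ follows from $\int_{t\ge|x|/2}t^{-3}\,dt\sim |x|^{-2}$. The Hörmander integral condition is the classical one for the one-dimensional Marcinkiewicz kernel. Since $\rho$ has only finitely many jumps, we verify it by direct computation: splitting $|K_\rho(x-y)-K_\rho(x)|_{\mathcal H}^2$ according to whether $t$ is comparable to $|x|$, only an $O(|y|/|x|)$-proportion of the $t$'s sees a jump.

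For $\sigma$: by Lemma \ref{lemma10} it is odd, $C^1$ on $\mathbb R$ (including at $0$), and satisfies
\[
|\sigma(r)|\le(1+|r|)^{-2},\qquad |\sigma'(r)|\le(1+|r|)^{-3}.
\]
Consequently $\sigma$ has mean zero and $|\widehat\sigma(\xi)|\lesssim\min(|\xi|,|\xi|^{-1})$. The size bound follows as for $\rho$. For the smoothness bound, the mean value theorem gives, for $|x|>2|y|$,
\[
|K_\sigma(x-y)-K_\sigma(x)|_{\mathcal H}\lesssim |y|\,|x|^{-2}.
\]
This is integrable over $|x|>2|y|$ with bound $O(1)$. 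Lemma \ref{lemma8} then gives $L^p(\mathbb R)\to L^p(\mathbb R;\mathcal H)$ bounds.

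\textbf{Step 2: the tensor kernels $K'K''$.} Here we iterate. The operator $T''_\rho$ is bounded $L^p(\mathbb R^2)\to L^p(\mathbb R^2;\mathcal H)$ by Fubini in the first variable. The operator $T'_\sigma$ is a vector-valued Calder\'on--Zygmund operator. By the Hilbert-space-valued extension of Calder\'on--Zygmund theory (equivalently, Marcinkiewicz--Zygmund, since scalar-to-$\mathcal H$ bounded operators extend to $\mathcal H_1$-valued inputs), $T'_\sigma$ extends boundedly to $L^p(\mathbb R^2;\mathcal H)\to L^p(\mathbb R^2;\mathcal H\otimes\mathcal H)=L^p(\mathbb R^2;\widetilde{\mathcal H})$. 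Composing the two gives the first three estimates.

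\textbf{Step 3: the product kernel $\zeta$.} We apply Lemma \ref{lemma9} with $\alpha=1$ and $B\lesssim1$. Lemma \ref{lemma10} gives that $\zeta$ is odd in each variable and that
\[
|\partial_t^j\partial_s^i\zeta(t,s)|\le(1+|t|)^{-2-j}(1+|s|)^{-2-i},\qquad i,j\in\{0,1\}.
\]
In particular $\|\zeta\|_1\lesssim1$. Because the bounds factor, $|\widetilde K_\zeta(x,y)|^2_{\widetilde{\mathcal H}}$ is dominated by a product of one-dimensional integrals, which gives $|x|^{-1}|y|^{-1}$. The separate difference bound in $x$ uses the mean value theorem in $t$: for $|x|\ge2|h|$ the segment between $x$ and $x+h$ stays comparable to $|x|$, which gives $|h|\,|x|^{-2}|y|^{-1}$. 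The mixed difference uses the mean value theorem in both variables with the mixed derivative bound, giving $|h||k|\,|x|^{-2}|y|^{-2}$.

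\textbf{Expected obstacle.} The main delicate point is the non-smooth kernel $\rho$. Its Hörmander condition must be checked by hand, since the mean value argument used for $\sigma$ and $\zeta$ is unavailable because of the jumps of $\chi_{(0,1)}$. A secondary point is justifying the Hilbert-valued iteration in Step 2. I would cite the standard vector-valued extension theorem, as in \cite{BCP}, for that step.
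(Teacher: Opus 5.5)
Your proposal follows the paper's own route. You check the hypotheses of Lemma \ref{lemma8} for $\rho$ and for the normalized profiles $\sigma^{(k,u',v')}/\Omega_k^*(u',v')$, and those of Lemma \ref{lemma9} (with $\alpha=1$) for $\zeta^{(u',v')}/\Omega^*(u',v')$. You then obtain the three mixed kernels by iterating the one-parameter bounds through their Hilbert-valued extensions, exactly as the paper does.

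One small correction is needed in Step 1. The low-frequency bound $|\widehat\sigma(\xi)|\lesssim|\xi|$ does not follow from $|\sigma(r)|\le(1+|r|)^{-2}$, because $\int|r|\,|\sigma(r)|\,dr$ can diverge; the most you get this way is $|\xi|\log(2+1/|\xi|)$. The paper instead uses $\int|r|^{1/2}|\sigma(r)|\,dr<\infty$ together with cancellation to get $|\widehat\sigma(\xi)|\lesssim\min\{|\xi|^{1/2},|\xi|^{-1}\}$. That weaker bound is still sufficient for the $L^2(dt/t)$ Fourier hypothesis of Lemma \ref{lemma8}, so the rest of your argument stands.
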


{\bf Proof:}
We first verify the Fourier hypothesis in Lemma \ref{lemma8}.
For the even extension of $\rho$, put $A=1$; for
$\kappa=\sigma^{(k,u',v')}$, put $A=\Omega_k^*(u',v')$.
In either case, with $\kappa=\rho$ in the first case,
\[
\int_{\mathbb R}\kappa=0,\qquad
\int_{\mathbb R}|x|^{1/2}|\kappa(x)|\,dx
+\operatorname{Var}(\kappa)\le CA.
\]
Indeed, $\rho$ is compactly supported and of bounded variation, while
the profile and derivative bounds give this estimate for $\sigma$.
Cancellation and integration by parts therefore give, respectively,
\[
|\widehat\kappa(\xi)|
\le CA\min\{|\xi|^{1/2},|\xi|^{-1}\},\qquad
\sup_{\xi\ne0}\int_0^\infty
|\widehat\kappa(t\xi)|^2\,\frac{dt}{t}\le CA^2.
\]
The fixed even kernel $\rho$ also satisfies the size and H\"ormander
conditions in Lemma \ref{lemma8}. Hence
\[
\|T_\rho G\|_{L^p(\R;\mathcal H)}
\le
C_p\|G\|_{L^p(\R)}.
\]

For \(\sigma^{(1,u',v')}\), the definition of \(\Omega_1^*\) gives
\[
|\sigma^{(1,u',v')}(r)|
\le
\Omega_1^*(u',v')(1+|r|)^{-2},
\qquad
\left|
\frac{d}{dr}\sigma^{(1,u',v')}(r)
\right|
\le
\Omega_1^*(u',v')(1+|r|)^{-3}.
\]
Since \(\sigma^{(1,u',v')}\) is odd, it has vanishing integral on \(\R\). The
preceding two estimates imply
\[
\|\sigma^{(1,u',v')}\|_1
+
B_{\sigma^{(1,u',v')}}
\le
C\,\Omega_1^*(u',v'),
\]
where \(B_{\sigma^{(1,u',v')}}\) denotes the size and smoothness constant in
Lemma \ref{lemma8}. Therefore Lemma \ref{lemma8} yields
\[
\|T_{\sigma^{(1,u',v')}}G\|_{L^p(\R;\mathcal H)}
\le
C_p\,\Omega_1^*(u',v')\,
\|G\|_{L^p(\R)}.
\]
The same argument gives
\[
\|T_{\sigma^{(2,u',v')}}G\|_{L^p(\R;\mathcal H)}
\le
C_p\,\Omega_2^*(u',v')\,
\|G\|_{L^p(\R)}.
\]

Similarly, by the definition of \(\Omega^*\),
\[
\left|
\partial_t^j\partial_s^i
\zeta^{(u',v')}(t,s)
\right|
\le
\Omega^*(u',v')
(1+|t|)^{-2-j}(1+|s|)^{-2-i},
\qquad i,j=0,1.
\]
Using cancellation in a low-frequency variable and integration by parts in a high-frequency variable, we obtain the desired estimates in the four frequency regimes; when both frequencies are large, the mixed derivative estimate is used.
These bounds give
\[
|\widehat{\zeta^{(u',v')}}(\xi,\eta)|
\le C\Omega^*(u',v')
\min\{|\xi|^{1/2},|\xi|^{-1}\}
\min\{|\eta|^{1/2},|\eta|^{-1}\}.
\]
Consequently,
\[
\sup_{\xi\eta\ne0}\int_0^\infty\int_0^\infty
|\widehat{\zeta^{(u',v')}}(t\xi,s\eta)|^2
\,\frac{dt}{t}\frac{ds}{s}
\le C\Omega^*(u',v')^2,
\]
which yields the product $L^2$ estimate by Plancherel's theorem.
Together with the separate cancellations, the profile estimates also imply
\[
\|\zeta^{(u',v')}\|_1
+
B_{\zeta^{(u',v')}}
\le
C\,\Omega^*(u',v'),
\]
where \(B_{\zeta^{(u',v')}}\) is the product Calderón--Zygmund constant in
Lemma \ref{lemma9}. Hence Lemma \ref{lemma9} gives
\[
\|\widetilde T_{\zeta^{(u',v')}}F\|_{L^p(\R^2;\widetilde{\mathcal H})}
\le
C_p\,\Omega^*(u',v')\,
\|F\|_{L^p(\R^2)}.
\]

The estimates for
\(K_\rho'K_\rho''\),
\(K_{\sigma^{(1,u',v')}}'K_\rho''\), and
\(K_\rho'K_{\sigma^{(2,u',v')}}''\) follow by applying the preceding
one-parameter estimates successively in the two variables, using their standard
Hilbert-valued extensions. This proves the lemma.
\endpf

	We now finish the proof of Theorem \ref{thm2}. By \eqref{45}, Minkowski's
	inequality, and Lemma \ref{lemma11} applied on each two-dimensional slice, we
	obtain
	\begin{eqnarray*}
		\|\mu_\Omega(f)\|_{L^p(\R^n\times\R^m)}
		&\le&
		C_p
		\bigg(
		\|\Omega\|_{L^1(S^{n-1}\times S^{m-1})}
		+
		\|\Omega_1^*\|_{L^1(S^{n-1}\times S^{m-1})}
		\\
		&&\qquad\qquad
		+
		\|\Omega_2^*\|_{L^1(S^{n-1}\times S^{m-1})}
		+
		\|\Omega^*\|_{L^1(S^{n-1}\times S^{m-1})}
		\bigg)
		\\
		&&\qquad\qquad\cdot
		\left(
		\|f\|_{L^p}
		+
		\|\mathfrak R_{\R^n}f\|_{L^p}
		+
		\|\mathfrak R_{\R^m}f\|_{L^p}
		+
		\|(\mathfrak R_{\R^n}\otimes\mathfrak R_{\R^m})f\|_{L^p}
		\right).
	\end{eqnarray*}
	Here the \(L^p\)-norms on the right-hand side are taken over
	\(\R^n\times\R^m\). Since the Riesz transforms are bounded on \(L^p\),
	\(1<p<\infty\), this gives
	\[
	\|\mu_\Omega(f)\|_{L^p(\R^n\times\R^m)}
	\le
	C_p
	\left(
	\|\Omega\|_1
	+
	\|\Omega_1^*\|_1
	+
	\|\Omega_2^*\|_1
	+
	\|\Omega^*\|_1
	\right)
	\|f\|_{L^p(\R^n\times\R^m)}.
	\]
	By Lemma \ref{lemma10}, and since
	\(\|\Omega\|_1\le C\|\Omega\|_{H^1(S^{n-1}\times S^{m-1})}\), we conclude that
	\[
	\|\mu_\Omega(f)\|_{L^p(\R^n\times\R^m)}
	\le
	C_{p,n,m}
	\|\Omega\|_{H^1(S^{n-1}\times S^{m-1})}
	\|f\|_{L^p(\R^n\times\R^m)}.
	\]
	This proves Theorem \ref{thm2} in the even-even case.

For the remaining components, put $h(r)=\operatorname{sgn}(r)\chi(|r|)$.
This odd kernel has zero integral and satisfies Lemma \ref{lemma8}.
The odd-odd case uses $h\otimes h$, so two applications of that lemma
give a bound by $C_p\|\Omega_{o,o}\|_1\|f\|_p$. In the odd-even case,
split only the second radial factor as $\chi=\rho+\lambda$. The two
kernels are $h\otimes\rho$ and $h\otimes\sigma^{(2,u',v')}$, acting
on $f$ and $\mathfrak R_{\mathbb R^m}f$, respectively, with the profile
in \eqref{42} formed from $\Omega_{o,e}$. The one-variable profile
estimate in the proof of Lemma \ref{lemma10} bounds the coefficient
integrals by $C(\|\Omega_{o,e}\|_1+\|\omega_{o,e}\|_1)$.
The even-odd case follows by interchanging the variables. Thus, for
$(\alpha,\beta)\ne(e,e)$,
\[
\|\mu_{\Omega_{\alpha,\beta}}(f)\|_p
\le C_{p,n,m}
\big(\|\Omega_{\alpha,\beta}\|_1+\|\omega_{\alpha,\beta}\|_1\big)
\|f\|_p.
\]
Lemma \ref{lemma7} and the parity decomposition complete the proof.

	\begin{Remark}\label{rmk:4.5}
		The role of the decomposition \(\chi=\rho+\lambda\) is to separate the part
		which already has cancellation from the part which does not. Indeed, the term
		involving \(\rho_t\rho_s\) has cancellation in both variables, while the
		one-parameter kernels associated with \(\lambda\) satisfy the size and
		smoothness estimates in Lemma \ref{lemma8} but do not have cancellation. For
		the remaining terms involving \(\lambda\), we introduce
		\(\sigma^{(1,u',v')}\), \(\sigma^{(2,u',v')}\), and
		\(\zeta^{(u',v')}\) by applying Riesz transforms. These auxiliary kernels are
		odd in their corresponding radial variables, and hence have the cancellation
		required by Lemmas \ref{lemma8} and \ref{lemma9}. This explains the
		decomposition used in the proof above.
	\end{Remark}

	\section{Characterizations of \texorpdfstring{$H^1(S^{l-1})$}{H1 on the sphere}}\label{sec:h1one}
We recall and compare several characterizations of $H^1(S^{l-1})$,
defined in \eqref{def:h1manifoldd}. We use the results in
\cite{CL,Colzanithesis,Colzani,chineseequivalence,2022MMV,RW,AL}.
The symbol \(\nabla\) denotes the Euclidean gradient in \(\mathbb R^l\),
whereas \(\nabla_{S^{l-1}}\) denotes the intrinsic gradient on the
sphere \(S^{l-1}\). When a kernel \(K(\theta,\eta)\) is defined on
\(S^{l-1}\times S^{l-1}\), we write \(\nabla_\theta\) and
\(\nabla_\eta\) for the intrinsic derivatives with respect to the first
and the second spherical variables, respectively. Higher order
derivatives such as
\(\nabla_\theta^\alpha\nabla_\eta^\beta K(\theta,\eta)\) are understood
locally in coordinate charts on the sphere. Since \(S^{l-1}\) is compact,
the corresponding estimates are independent of the chosen charts up to
constants.

Let \(C^{S^{l-1}}\) denote the Poisson extension on the unit ball
\(B^l=\{z\in\R^l:|z|<1\}\):
\[
C^{S^{l-1}}(f)(z)
=
\int_{S^{l-1}} f(u')p(z,u')\,du',
\qquad
p(z,u')
=
c_l\frac{1-|z|^2}{|z-u'|^l}.
\]
Here \(c_l>0\) is the normalizing dimensional constant. We recall the following
classical maximal and square functions on \(S^{l-1}\):
\begin{eqnarray}
	C_{+}^{S^{l-1}}(f)(z')
	&=&
	\sup_{0<r<1}
	\left|
	C^{S^{l-1}}(f)(rz')
	\right|,
	\nonumber\\
	C_{*}^{S^{l-1}}(f)(z')
	&=&
	\sup_{w\in\Theta_{z'}^{S^{l-1}}}
	\left|
	C^{S^{l-1}}(f)(w)
	\right|,
	\label{def:nontangentialmax}\\
	S_{\gamma}^{S^{l-1}}(f)(z')
	&=&
	\left(
	\int_{\Theta_{z',\gamma}^{S^{l-1}}}
	\left|
	\nabla_w C^{S^{l-1}}(f)(w)
	\right|^2
	\frac{dw}{(1-|w|)^{l-2}}
	\right)^{1/2},
	\qquad
	1/2\le\gamma<1.
	\nonumber
\end{eqnarray}
Here \(\Theta_{z',\gamma}^{S^{l-1}}\) denotes the convex hull of
\(\gamma B^l\) and \(z'\). We write
\[
\Theta_{z'}^{S^{l-1}}
=
\Theta_{z',1/2}^{S^{l-1}},
\qquad
S^{S^{l-1}}(f)
=
S_{1/2}^{S^{l-1}}(f).
\]

The conjugate Riesz transform \({\mathcal R}_{S^{l-1}}\) on \(S^{l-1}\) is
defined by the principal value integral
\begin{eqnarray}\label{def:conjugate}
	{\mathcal R}_{S^{l-1}}(f)(x')
	=
	\operatorname{p.v.}\int_{S^{l-1}}H(x',y')f(y')\,dy',
\end{eqnarray}
where
\[
H(x',y')
=
c_l\,l\,
\left(
y'-(y'\cdot x')x'
\right)
\int_0^1
\frac{1-r^2}{|y'-rx'|^{l+2}}
\,dr .
\]
See \cite{RW}. Here \(y'-(y'\cdot x')x'\) is the tangential component of
\(y'\) at \(x'\).

For \(f\in L^1(S^{l-1})\), the principal value above is first understood in
the sense of distributions. More precisely, for
\(\varphi\in C^\infty(S^{l-1};\mathbb R^l)\), we define
\[
\left\langle
{\mathcal R}_{S^{l-1}}f,\varphi
\right\rangle
=
\int_{S^{l-1}}
f(y')
\left[
\operatorname{p.v.}\int_{S^{l-1}}
H(x',y')\cdot\varphi(x')\,dx'
\right]dy'.
\]
If this distribution is represented by an \(L^1\)-vector field, we still denote
that representative by \({\mathcal R}_{S^{l-1}}f\).

We shall also use the following abstract notation. Let \(D\) be a complete
Riemannian manifold with non-negative Ricci curvature. Let \(P_t^D\) be the
Poisson semigroup on \(D\), and define
\[
\Gamma_x^D
=
\{(y,t)\in D\times\R_+:\rho_D(x,y)<t\},
\]
where \(\rho_D\) is the geodesic distance on \(D\). We define
\begin{eqnarray}\label{def:maninar}
	\begin{array}{lll}
		P_D^*(f)(x)
		&=&
		\displaystyle
		\sup_{(y,t)\in\Gamma_x^D}
		|P_t^D f(y)|,
		\\[6pt]
		A_D(f)(x)
		&=&
		\displaystyle
		\left(
		\int_{\Gamma_x^D}
		\left|
		\nabla_D^\perp P_t^D f(y)
		\right|^2
		\frac{t\,dt\,d_Dy}{|B_D(x,t)|}
		\right)^{1/2},
		\\[8pt]
		{\mathfrak R}_D(f)
		&=&
		\displaystyle
		\nabla_D(-\Delta_D)^{-1/2}f,
	\end{array}
\end{eqnarray}
where $
\nabla_D^\perp=(\nabla_D,\partial_t).$
The following is the spherical case of Chen--Luo's characterization.

\begin{Proposition}\label{proposition3}(\cite[Theorems 1 and 3, p.~24]{CL})
	Let \(f\in L_0^1(S^{l-1})\).
	Then
	\[
	\|f\|_{H^1}
	\cong
	\|P_{S^{l-1}}^*(f)\|_1
	\cong
	\|A_{S^{l-1}}(f)\|_1
	\cong
	\|{\mathfrak R}_{S^{l-1}}'(f)\|_1
	\cong
	\|f\|_{H_a^1},
	\]
	where
	\begin{eqnarray}\label{def:rieszp}
		{\mathfrak R}_D'
		=
		({\mathfrak R}_D,Id),
	\end{eqnarray}
where \(H_a^1\) is the atomic Hardy space recalled below.
\end{Proposition}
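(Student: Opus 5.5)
Since the statement is quoted from \cite{CL}, the plan is to check that $S^{l-1}$ satisfies the hypotheses of Chen--Luo's Theorems~1 and~3 and to indicate the chain of estimates they give. We do not reprove the general manifold theory.

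\textbf{Step 1: hypotheses.} The sphere $D=S^{l-1}$, $l\ge2$, is a compact (hence complete) Riemannian manifold with $\mathrm{Ric}=(l-2)g\ge0$. So Bishop--Gromov gives volume doubling, Li--Yau gives Gaussian heat kernel bounds, and subordination gives the Poisson kernel bounds that \cite{CL} uses. The only point specific to compactness is that constants lie in the kernel of $\Delta_{S^{l-1}}$ and that $P_t^{S^{l-1}}f\to\int f$ as $t\to\infty$. This is why we restrict to $f\in L^1_0(S^{l-1})$. On this subspace $(-\Delta_{S^{l-1}})^{-1/2}$ is defined spectrally, since the first nonzero eigenvalue is $l-1>0$, so $\mathfrak R_{S^{l-1}}$ makes sense. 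Moreover the mean-zero condition is exactly the moment condition built into the global atom, so \cite[p.~23]{CL} applies verbatim.

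\textbf{Step 2: the chain of inequalities.} We would close the cycle
\[
\|f\|_{H^1_a}\gtrsim\|A_{S^{l-1}}f\|_1,\qquad
\|A_{S^{l-1}}f\|_1\gtrsim\|P^*_{S^{l-1}}f\|_1\ge\|f\|_{H^1},
\]
\[
\|f\|_{H^1}\gtrsim\|f\|_{H^1_a},\qquad
\|f\|_{H^1_a}\gtrsim\|\mathfrak R'_{S^{l-1}}f\|_1,\qquad
\|\mathfrak R'_{S^{l-1}}f\|_1\gtrsim\|f\|_{H^1}.
\]
The individual links are obtained as follows.
\begin{enumerate}
\item Atoms are mapped uniformly into $L^1$ by the area function $A_{S^{l-1}}$ and by the Riesz transform $\mathfrak R_{S^{l-1}}$. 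For both we use $L^2$ bounds (spectral theory) on a dilate of the atom's ball, and off-diagonal decay from the Poisson kernel gradient bounds combined with cancellation away from it.
\item $\|A_{S^{l-1}}f\|_1\gtrsim\|P^*_{S^{l-1}}f\|_1$ follows from the Fefferman--Stein good-$\lambda$/tent-space argument, which needs only doubling and a mean-value inequality for harmonic functions on $D\times\mathbb R_+$.
\item $\|f\|_{H^1}\gtrsim\|f\|_{H^1_a}$, that is radial maximal control giving an atomic decomposition, goes through radial-to-nontangential control and then a Calder\'on-reproducing/tent-space decomposition.
\end{enumerate}

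\textbf{Main obstacle.} The hardest link is the last one, $\|f\|_{H^1}\lesssim\|f\|_{1}+\|\mathfrak R_{S^{l-1}}f\|_1$. Here I would follow \cite{CL}. Form the generalized conjugate system
\[
F=\bigl(\partial_tP_tf,\ \nabla_{S^{l-1}}P_tf\bigr)
\]
on $S^{l-1}\times\mathbb R_+$, and use Bochner's formula with $\mathrm{Ric}\ge0$ to show that $|F|^q$ is subharmonic for some $q<1$ (any $q\ge (l-2)/(l-1)$). Then dominate $P^*_{S^{l-1}}f$ by the Poisson maximal function of $\bigl(|f|^q+|\mathfrak R_{S^{l-1}}f|^q\bigr)^{1/q}$ and apply the maximal theorem on $L^{1/q}$. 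The curvature sign is used precisely here. Compactness forces a separate treatment of large $t$, where $P_tf$ decays exponentially because $\int f=0$.
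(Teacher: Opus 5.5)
Your approach matches the paper's. The paper gives no argument of its own: it invokes \cite[Theorems 1 and 3]{CL} with $D=S^{l-1}$. Your Step 1 (compactness, $\mathrm{Ric}=(l-2)g\ge0$, and the spectral gap $l-1$, which makes $(-\Delta_{S^{l-1}})^{-1/2}$ well defined on $L^1_0$) is exactly the verification that this citation requires.

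Two points in your sketch need correcting.

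First, the conjugate system in the Riesz link must be built from $(-\Delta_{S^{l-1}})^{-1/2}f$, not from $f$. The field $(\partial_tP_tf,\nabla_{S^{l-1}}P_tf)$ has boundary value $(-(-\Delta_{S^{l-1}})^{1/2}f,\nabla_{S^{l-1}}f)$, which is not controlled by $\|f\|_1+\|\mathfrak R_{S^{l-1}}f\|_1$. Moreover, $P_tf$ is not one of its components, so $|F|$ cannot be used to dominate $P^*_{S^{l-1}}f$. The right choice is $F=\nabla^\perp_{S^{l-1}}P_t(-\Delta_{S^{l-1}})^{-1/2}f=(\vec P_t\mathfrak R_{S^{l-1}}f,\,-P_tf)$, where $\vec P_t$ is the Poisson semigroup of the Hodge Laplacian on one-forms. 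This is the one-parameter version of the system $(U_{00},U_{10})$ in the paper's proof of Proposition \ref{proposition13}. It is still the gradient of a harmonic function on the $l$-dimensional cylinder $S^{l-1}\times\mathbb R_+$, so your exponent range $q\ge(l-2)/(l-1)$ remains correct.

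Second, link (2) does not follow from doubling and the mean-value inequality alone. Indeed, $A_{S^{l-1}}(1)=0$ while $P^*_{S^{l-1}}(1)=1$, so the decay of $P_tf$ as $t\to\infty$ forced by $\int f=0$ is used there as well. That is the real reason the statement is restricted to $L^1_0(S^{l-1})$. It is not a moment condition on global atoms: the paper's type (ii) atoms carry no cancellation.
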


We also recall the atomic Hardy space on \(S^{l-1}\); see \cite{Colzani}.
Fix \(r_*>0\) smaller than the injectivity radius of \(S^{l-1}\). A function
\(a\in L^\infty(S^{l-1})\) is called an \(H^1\)-atom if one of the following
conditions holds. (i) There exists a geodesic ball
\(B=B_{S^{l-1}}(z_0',r)\), \(0<r<r_*\), such that
\[
\operatorname{supp}a\subset B,
\qquad
\int_{S^{l-1}}a(z')\,dz'=0,
\qquad
\|a\|_{L^\infty(S^{l-1})}\le |B|^{-1}.
\]
(ii) There exists a geodesic ball
\(B=B_{S^{l-1}}(z_0',r_*)\) such that
\[
\operatorname{supp}a\subset B,
\qquad
\|a\|_{L^\infty(S^{l-1})}\le |B|^{-1}.
\]
In this case no cancellation condition is imposed.

The atomic Hardy space \(H_a^1(S^{l-1})\) consists of all
\(f\in L^1(S^{l-1})\) admitting a representation
\[
f=\sum_{j=1}^{\infty}\lambda_j a_j
\qquad\text{in }L^1(S^{l-1}),
\]
where \(a_j\) are \(H^1\)-atoms and
$
\sum_{j=1}^{\infty}|\lambda_j|<\infty.
$
Its norm is
\[
\|f\|_{H_a^1(S^{l-1})}
=
\inf
\left\{
\sum_{j=1}^{\infty}|\lambda_j|:
f=\sum_{j=1}^{\infty}\lambda_j a_j
\right\}.
\]

For later reference, we introduce the augmented Riesz transforms
\[
\widetilde R_{S^{l-1}}'
=
(\widetilde R_{S^{l-1}},Id),
\qquad
{\mathcal R}_{S^{l-1}}'
=
({\mathcal R}_{S^{l-1}},Id),
\qquad
{\mathfrak R}_{S^{l-1}}'
=
({\mathfrak R}_{S^{l-1}},Id),
\]
where \(Id\) denotes the identity operator. Thus, for example,
\[
\|{\mathcal R}_{S^{l-1}}'(f)\|_1
\cong
\|{\mathcal R}_{S^{l-1}}(f)\|_1+\|f\|_1.
\]

Proposition \ref{proposition3} gives the spectral Poisson and atomic
characterizations. The ball-Poisson radial and non-tangential maximal
functions have equivalent $L^1$ norms \cite{Colzani}; for the spherical
Riesz characterization, see \cite{RW}. For mean-zero data,
\cite[Theorem 1.1]{chineseequivalence} gives
\[
\|C_*^{S^{l-1}}f\|_1\cong\|S^{S^{l-1}}f\|_1.
\]
The mean-zero condition ensures $C^{S^{l-1}}f(0)=0$, as used in that
proof. Propositions \ref{thm:tworiesz} and \ref{prop:tworadial} below
give explicit comparisons of the Riesz transforms and the two radial
maximal functions. Together these yield the following equivalences.

	\begin{Theorem}\label{thm:onecharacter}
	For \(f\in L_0^1(S^{l-1})\), we have
	\begin{eqnarray}\label{eq:onecharacter}
		\begin{aligned}
			\|f\|_{H_a^1}
			&\cong
			\|C_{+}^{S^{l-1}}(f)\|_{1}
			\cong
			\|C_{*}^{S^{l-1}}(f)\|_{1}
			\cong
			\|S^{S^{l-1}}(f)\|_1
			\cong
			\|\widetilde{R}_{S^{l-1}}'(f)\|_1 \\
			\cong
			\|{\mathcal R}_{S^{l-1}}'(f)\|_{1}
			&\cong
			\|{\mathfrak R}_{S^{l-1}}'(f)\|_{1}
			\cong
			\|A_{S^{l-1}}(f)\|_{1}
			\cong
			\|P_{S^{l-1}}^*(f)\|_{1}
			\cong
			\|P_{S^{l-1}}^{+}(f)\|_{1}.
		\end{aligned}
	\end{eqnarray}
\end{Theorem}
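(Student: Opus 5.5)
The proof assembles known equivalences into one chain, adding the two comparisons announced in the text (Propositions \ref{thm:tworiesz} and \ref{prop:tworadial}). Throughout, $f\in L_0^1(S^{l-1})$.

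\textbf{Spectral block.} Proposition \ref{proposition3} gives
\[
\|f\|_{H^1}=\|P_{S^{l-1}}^{+}f\|_1\cong\|P^*_{S^{l-1}}f\|_1\cong\|A_{S^{l-1}}f\|_1\cong\|\mathfrak R'_{S^{l-1}}f\|_1\cong\|f\|_{H_a^1}.
\]
This links the atomic norm to all the spectral quantities.

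\textbf{Ball-Poisson block.} Colzani's theory \cite{Colzani} identifies $H_a^1$ with the space defined by $C_+^{S^{l-1}}$ and by $C_*^{S^{l-1}}$, with equivalent norms. The mean-zero condition makes the constants harmless. Then \cite[Theorem 1.1]{chineseequivalence} gives $\|C_*^{S^{l-1}}f\|_1\cong\|S^{S^{l-1}}f\|_1$; this uses $C^{S^{l-1}}f(0)=0$, which holds because $f$ has mean zero. Proposition \ref{prop:tworadial} gives a direct comparison of $C_+^{S^{l-1}}$ with $P^+_{S^{l-1}}$. I would use it as a consistency check, or to avoid relying on Colzani for the radial function. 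Its proof compares $e^{-t\sqrt{-\Delta}}$ with the ball Poisson kernel at $r=e^{-t}$. Both are functions of the zonal spherical harmonic projections, with multipliers $e^{-t\sqrt{k(k+l-2)}}$ and $r^k$. Their difference is a multiplier of order $t\,e^{-ct k}$ times bounded symbols, and its kernel is dominated by an approximate identity. Hence the two radial maximal functions differ by at most a multiple of the Hardy--Littlewood maximal function of an $H^1$-bounded error. The cleanest route is to estimate the difference on atoms.

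\textbf{Riesz block.} Ricci--Weiss \cite{RW} characterize $H^1$ through the conjugate system built from ${\mathcal R}_{S^{l-1}}$, since $(C^{S^{l-1}}f,\,r\,\nabla\text{-conjugates})$ forms a harmonic system in the ball. This gives $\|{\mathcal R}'_{S^{l-1}}f\|_1\cong\|f\|_{H_a^1}$. Proposition \ref{thm:tworiesz} then compares $\widetilde R_{S^{l-1}}$, ${\mathcal R}_{S^{l-1}}$ and ${\mathfrak R}_{S^{l-1}}$. The plan is to show that the differences $\widetilde R-{\mathcal R}$ and ${\mathcal R}-{\mathfrak R}$ are operators with kernels of size $O(|\theta-\eta|^{2-l})$, or a logarithm when $l=2$; here the normal component has to be accounted for. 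Such operators are bounded on $L^1$, so each Riesz norm augmented by $\|f\|_1$ is equivalent to the next.

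\textbf{Main obstacle.} The hardest step is the kernel computation for the restricted Riesz transform $\widetilde R$. It is defined implicitly through \eqref{def:tildeR:dist}, so its kernel must be extracted by writing $R_j(\mathrm{p.v.}\,\sigma/|\cdot|^l)$ in polar form and isolating the homogeneous angular part. One must then check that the singular part matches $H(x',y')$ up to an integrable remainder. This is similar to the computation of $e^{(j)}_{l,t}$ in Lemma \ref{lemma10}, which I would imitate. Once all the blocks are linked through $\|f\|_{H_a^1}$, the full chain \eqref{eq:onecharacter} follows.
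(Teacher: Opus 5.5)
Your assembly of the chain is essentially the paper's, since the paper's proof is itself only a combination of Proposition~\ref{proposition3}, Colzani's ball-Poisson results, \cite[Theorem 1.1]{chineseequivalence} (using $C^{S^{l-1}}f(0)=0$), and Propositions~\ref{thm:tworiesz} and~\ref{prop:tworadial}. You tie $C_\pm^{S^{l-1}}$ to the atomic norm directly through Colzani, whereas the paper ties $C_+^{S^{l-1}}$ to $P^+_{S^{l-1}}$ through Proposition~\ref{prop:tworadial}; both close the chain. Where you genuinely depart from the paper is in how you would prove the two auxiliary propositions.

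\textbf{Radial maximal functions.} The paper proves Proposition~\ref{prop:tworadial} by checking the Martini--Meda--Vallarino axioms for the ball kernel with $t=1-r$, then quoting \cite[Theorem 4.15, Corollary 5.4]{2022MMV}. Your spectral comparison at $r=e^{-t}$ is a valid and more elementary route, but not for the reason you give. The Hardy--Littlewood maximal function is not bounded on $L^1$, and atoms are not needed. Put $a=l-2$. Since $(k+a/2)^2-a^2/4=k(k+a)$, we have $C_{e^{-t}}=e^{ta/2}e^{-t\sqrt{-\Delta+a^2/4}}$. Subordination to the heat semigroup, together with $1-e^{-cu}\le cu$, bounds the kernel $Q_t$ of $e^{-t\sqrt{-\Delta}}-C_{e^{-t}}$ by $t^2(t+\rho)^{-l}+t(t+\rho)^{2-l}+t$ for $t\le 1$, where $\rho=\rho_{S^{l-1}}(\theta,\eta)$. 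When $l=2$, $Q_t\equiv0$. For $t>1$ both kernels are bounded. Hence $\sup_{t}|Q_t|\lesssim \rho^{2-l}+1$ is integrable, so $f\mapsto\sup_t|Q_tf|$ is bounded on $L^1$. The resulting $\|f\|_1$ is absorbed because $|f|\le P^+_{S^{l-1}}f$ and $|f|\le C^{S^{l-1}}_+f$ a.e.

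\textbf{Riesz transforms.} For Proposition~\ref{thm:tworiesz}, the paper never computes the kernel of $\widetilde R_{S^{l-1}}$; it cites \cite[Theorem 2]{RW} for $\widetilde R'\cong\mathcal R'$. It then writes $\mathcal R-\mathfrak R=D_l$ with $D_lg=\int_0^1 w_a(r)\nabla_{S^{l-1}}C_rg\,dr/r$. Here $\int_0^1 r^{k-1}w_a(r)\,dr=1/k-1/\sqrt{k(k+a)}$ and $w_a(r)=O(1-r)$, so $L^1$-boundedness follows from Fubini. Your plan lacks exactly this device for turning the symbol $1/k-1/\sqrt{k(k+a)}\sim a/(2k^2)$ into an integrable kernel, and you should use it.

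A direct kernel computation for $\widetilde R_{S^{l-1}}$, modeled on $e^{(j)}_{l,t}$ in Lemma~\ref{lemma10}, is feasible and would remove the dependence on \cite{RW}. Watch the sign, however. With the usual positive $\gamma_l$, the principal part of $\widetilde R_{S^{l-1}}$ is that of $-\mathcal R_{S^{l-1}}$. On $S^1$, for instance, $\widetilde R=-\mathcal R+x'(-\Delta)^{-1/2}$ on mean-zero functions. So it is $\widetilde R+\mathcal R$, not $\widetilde R-\mathcal R$, that has the $O(\rho^{2-l})$ kernel (logarithmic when $l=2$), and the normal component you mention lives there. This is harmless for the norm equivalence. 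Since you already invoke Ricci--Weiss for $\mathcal R'$, citing their Theorem 2 as the paper does removes your stated main obstacle altogether.
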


\begin{Remark}
	For related work on spherical Hardy kernels and singular integrals,
see \cite{RR,2000stefanove}.
\end{Remark}
{\bf Proof of Theorem \ref{thm:onecharacter}:}
Combine Proposition \ref{proposition3} and the ball-Poisson
maximal and area equivalences above with Propositions \ref{thm:tworiesz}
and \ref{prop:tworadial}. This gives \eqref{eq:onecharacter}.
\endpf

We shall also use the Hilbert-valued form of the one-parameter
area--Riesz equivalences. It follows directly from the scalar results by
randomization: for a finite orthonormal expansion
$g=\sum_{j=1}^J g_j e_j$, apply the scalar equivalence to
$\sum_j\varepsilon_j g_j$ and average over independent Rademacher signs.
The Khintchine inequalities identify the averages of the norms of the
linear Riesz transforms and of the linear cone-valued square-function
operators with the corresponding Hilbert-valued norms. The constants
are independent of $J$; finite-rank approximation gives the result for
an arbitrary Hilbert space. In particular, for Hilbert-valued mean-zero
$g$,
\begin{equation}\label{eq:hilbert-area-riesz}
\|S^{S^{l-1}}g\|_1
\cong\|\mathcal R_{S^{l-1}}'g\|_1
\cong\|\mathfrak R_{S^{l-1}}'g\|_1\cong\|A_{S^{l-1}}g\|_1..
\end{equation}
The same argument extends \cite[Theorem 3]{CL} to Hilbert-valued
mean-zero data; this extension is used in Proposition \ref{proposition13}.

	For the Riesz comparison, introduce the boundary operator
	\begin{equation}\label{eq:xif}
		\Xi(f)(x')
		=
		f(x')x'
		+
		{\mathcal R}_{S^{l-1}}f(x').
	\end{equation}
	This operator is associated with the conjugate harmonic system in the unit ball:
	if \(h\) solves the Neumann problem in \(B^l\) with boundary normal derivative
	\(f\), then formally
	\[
	\nabla h(x')
	=
	f(x')x'
	+
	{\mathcal R}_{S^{l-1}}f(x')
	=
	\Xi(f)(x').
	\]

	\begin{Remark}
		Other variants of Riesz transforms on the sphere are discussed in
		\cite{A1,AL,St1}.
	\end{Remark}

	\begin{Proposition}\label{thm:tworiesz}
		For \(f\in L_0^1(S^{l-1})\), one has
		\begin{eqnarray}\label{eq:tworiesz}
			\|{\mathcal R}_{S^{l-1}}'(f)\|_1
			\cong
			\|\widetilde R_{S^{l-1}}'(f)\|_1
			\cong
			\|{\mathfrak R}_{S^{l-1}}'(f)\|_1.
		\end{eqnarray}
		Moreover, whenever these equivalent quantities are finite,
\(\Xi(f)\in L^1(S^{l-1};\mathbb C^l)	\)	 and
		\[
		\|\Xi(f)\|_1
		\cong
		\|{\mathcal R}_{S^{l-1}}'(f)\|_1.
		\]
	\end{Proposition}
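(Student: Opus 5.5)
The plan is to pass through the boundary operator $\Xi$ of \eqref{eq:xif}. The trivial step comes first. Since $H(x',y')$ is a multiple of the tangential vector $y'-(y'\cdot x')x'$, the field ${\mathcal R}_{S^{l-1}}f(x')$ is orthogonal to $x'$. Hence, pointwise,
\[
|\Xi(f)(x')|^2=|f(x')|^2+|{\mathcal R}_{S^{l-1}}f(x')|^2 ,
\]
and therefore $\|\Xi(f)\|_1\cong\|{\mathcal R}_{S^{l-1}}'(f)\|_1$, with $\Xi(f)\in L^1$ exactly when ${\mathcal R}_{S^{l-1}}'(f)\in L^1$. This settles the last assertion of the proposition once the three-way equivalence \eqref{eq:tworiesz} is known. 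For the remaining comparisons I would first work with smooth mean-zero $f$, for which all three transforms are honest smooth fields. The general $L^1_0$ case then follows by approximating with rotation averages, which commute with all three operators, together with the distributional definition of ${\mathcal R}_{S^{l-1}}$ given above and Fatou's lemma.

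\emph{Spectral versus conjugate.} Expand $f=\sum_{k\ge1}Y_k$ in spherical harmonics. The Neumann solution in $B^l$ with boundary datum $f$ is $h=\sum_k k^{-1}r^kY_k$. Formally $\nabla h|_{S^{l-1}}=\Xi(f)$, so its tangential part gives
\[
{\mathcal R}_{S^{l-1}}f=\nabla_{S^{l-1}}\sum_k k^{-1}Y_k ,
\qquad
{\mathfrak R}_{S^{l-1}}f=\nabla_{S^{l-1}}\sum_k (k(k+l-2))^{-1/2}Y_k .
\]
The two fields differ by $\nabla_{S^{l-1}}m(-\Delta_{S^{l-1}})f$, where the multiplier $m$ has symbol $k^{-1}-(k(k+l-2))^{-1/2}=O(k^{-2})$, with symbol-type bounds on its differences in $k$. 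I would show that $\nabla_{S^{l-1}}m(-\Delta)$ has a kernel bounded by $C|\theta-\eta|^{2-l}$ (respectively $C(1+\log^+(1/|\theta-\eta|))$ when $l=2$). To do this I would write $m$ through the Poisson semigroup, i.e. subordination, combined with the heat kernel bounds on the sphere. This yields an operator bounded on $L^1$, so $\|{\mathfrak R}_{S^{l-1}}f\|_1\le\|{\mathcal R}_{S^{l-1}}f\|_1+C\|f\|_1$ and conversely. The identification of the formal Neumann expression with the principal value kernel $H$ is classical (\cite{RW}), and I would verify it on single harmonics.

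\emph{Restricted versus conjugate.} For even or general smooth mean-zero $\sigma$, I would compute the kernel of $\widetilde R_j$ directly. The Riesz transform of $\operatorname{p.v.}\,\sigma(x')/|x|^l$ is $\gamma_l\,\operatorname{p.v.}\int K_l(x'-r\eta)\sigma(\eta)\,dr\,d\eta/r$ restricted to $|x|=1$, modulo the $\delta_0$-term. Integrating in $r$ gives an angular kernel $\int_0^\infty K_l(\theta-r\eta)\,dr/r$, suitably renormalized using $\int\sigma=0$. Its singular part near $\eta=\theta$ comes from $r\approx1$. I would compare it with $H(\theta,\eta)$ and with the normal term $f(\theta)\theta$, aiming for
\[
\widetilde R_{S^{l-1}}f=c_l\,\Xi(f)+\mathcal E f,
\]
where $\mathcal E$ has kernel $O(J_l(|\theta-\eta|))$, as in the proof of Lemma \ref{lemma10}, and is thus bounded on $L^1$. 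Combined with the first paragraph this gives $\|\widetilde R_{S^{l-1}}'f\|_1\cong\|{\mathcal R}_{S^{l-1}}'f\|_1$.

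The main obstacle is this last kernel comparison. The point is to check that the principal singularities of $\int_0^\infty K_l(\theta-r\eta)\,dr/r$ and of $H(\theta,\eta)$ (both of order $|\theta-\eta|^{1-l}$ and tangential at leading order) match exactly, with the correct constant and including the diagonal term $f(\theta)\theta$. Only then is the remainder genuinely integrable rather than merely Calder\'on--Zygmund. I would first try the substitution $d=|\theta-\eta|$, $|\theta-r\eta|^2=(1-r)^2+rd^2$ used in Lemma \ref{lemma10}, and expand to second order in $(r-1,d)$. If the constants do not align directly, the fallback is to test both operators on the spherical harmonics $Y_k$ and compare their multipliers. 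The restricted Riesz transform acts on $Y_k$ by an explicit Gamma-function ratio, and I would check that this ratio agrees with that of $\Xi$ up to $O(k^{-1})$ at the level of order-zero symbols.
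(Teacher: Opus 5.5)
Your treatment of $\Xi$ is the same as the paper's. Your spectral--conjugate comparison also uses the same multiplier $k^{-1}-(k(k+a))^{-1/2}$, $a=l-2$. The paper realizes this multiplier as $\int_0^1 w_a(r)\nabla_{S^{l-1}}C_r\,\frac{dr}{r}$ through the identity $\frac1\pi\int_0^1\frac{du}{(k+au)\sqrt{u(1-u)}}=(k(k+a))^{-1/2}$, so only the ball Poisson kernel appears. Then $0\le w_a(r)\le C(1-r)$ and $\sup_\eta\int_{S^{l-1}}|\nabla_\theta p(r\theta,\eta)|\,d\theta\lesssim(1-r)^{-1}$ give $L^1$-boundedness without any pointwise kernel bound. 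Your subordination/symbol route can probably be made to work, but it needs a spectral-multiplier argument that this explicit representation avoids. The real problem is the restricted--conjugate step. The paper does not reprove it (it quotes \cite[Theorem 2]{RW}), and your plan aims it at a false identity.

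There is no decomposition $\widetilde R_{S^{l-1}}f=c_l\,\Xi(f)+\mathcal Ef$ with $\mathcal E$ given by an $O(J_l(|\theta-\eta|))$ kernel, because $\widetilde R_{S^{l-1}}$ has no order-zero normal component.
\begin{itemize}
\item \emph{On harmonics.} Apply all Euclidean derivatives to the homogeneous extension of $Y_k$, and write $x_jY_k=H^{(j)}_{k+1}+|x|^2\frac{\partial_jY_k}{2k+l-2}$ with $H^{(j)}_{k+1}$ harmonic of degree $k+1$. The distribution $\operatorname{p.v.}\,Y_k(x')|x|^{-l}$ has Fourier transform $\gamma_kY_k(\xi')$ with $\gamma_k=\pi^{l/2}i^{-k}\Gamma(\frac k2)/\Gamma(\frac{k+l}{2})$. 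Hence $\widetilde R_jY_k=\alpha_kH^{(j)}_{k+1}+\beta_k\frac{\partial_jY_k}{2k+l-2}$ with $\alpha_k=1+O(k^{-1})$ and $\beta_k=-1+O(k^{-1})$. In particular $\widetilde R_{S^{l-1}}$ is not diagonal on $Y_k$.
\item \emph{Comparison.} By contrast, $\mathcal R_jY_k=-H^{(j)}_{k+1}+\frac{k+l-2}{k}\frac{\partial_jY_k}{2k+l-2}$, while $\Xi(Y_k)=k^{-1}\nabla Y_k$ has no degree-$(k+1)$ part at all. So $\widetilde R_{S^{l-1}}+\mathcal R_{S^{l-1}}$ has order $-1$, but $\widetilde R_{S^{l-1}}-c\,\Xi$ keeps an order-zero part for every $c$. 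The normal component of $\widetilde R_{S^{l-1}}Y_k$ is only $O(k^{-1})Y_k$ (exactly $k^{-1}Y_k$ when $l=2$).
\item \emph{In your kernel language.} Put $d=|\theta-\eta|$. The tangential part of $\int_0^\infty K_l(\theta-r\eta)\frac{dr}{r}$ is exactly $-\gamma_l(\eta-(\eta\cdot\theta)\theta)\int_0^\infty((1-r)^2+rd^2)^{-(l+1)/2}dr$, and its leading singularity is that of $-H(\theta,\eta)$. The normal part is $\gamma_l\int_0^\infty\frac{(1-r)+rd^2/2}{((1-r)^2+rd^2)^{(l+1)/2}}\frac{dr}{r}$, suitably renormalized at $r=0$. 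This is $O(J_l(d))$: the $(1-r)$ piece is odd about $r=1$ to leading order, and the other piece carries a factor $d^2$.
\end{itemize}
Both your second-order expansion and your multiplier fallback would therefore find that the diagonal term $f(\theta)\theta$ cannot be matched.

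The repair is to aim for $\widetilde R_{S^{l-1}}=-\mathcal R_{S^{l-1}}+\mathcal E$ with $\mathcal E$ of integrable kernel. Any term $f(\theta)v(\theta)$ with $v$ bounded is trivially bounded on $L^1$. This includes $f(\theta)\theta$ and whatever arises from comparing the Euclidean principal value at $y=\theta$ with the spherical one. So no diagonal matching is needed, and $\|\widetilde R_{S^{l-1}}'f\|_1\cong\|\mathcal R_{S^{l-1}}'f\|_1$ follows. As written, though, this step is left unproved and targets the wrong formula. Citing Ricci--Weiss, as the paper does, is the economical route.
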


	{\bf Proof of Proposition \ref{thm:tworiesz}:}
	The equivalence between the augmented restricted and conjugate Riesz
	transforms follows from \cite[Theorem 2]{RW}. Thus it remains to compare
	the conjugate Riesz transform \({\mathcal R}_{S^{l-1}}\) with the spectral
	Riesz transform \({\mathfrak R}_{S^{l-1}}\).

	Let \(Y_k\) be a spherical harmonic of degree \(k\ge1\), and put
	\[
	a=l-2.
	\]
	Since
	\[
	-\Delta_{S^{l-1}}Y_k=k(k+a)Y_k,
	\]
	the spectral Riesz transform satisfies
	\[
	{\mathfrak R}_{S^{l-1}}Y_k
	=
	\frac{\nabla_{S^{l-1}}Y_k}{\sqrt{k(k+a)}}.
	\]
	On the other hand, the harmonic solution of the Neumann problem with
	boundary datum \(Y_k\) is
	\[
	h(r,\theta)=\frac{r^k}{k}Y_k(\theta).
	\]
	Hence its tangential boundary gradient gives
	\[
	{\mathcal R}_{S^{l-1}}Y_k
	=
	\frac1k\nabla_{S^{l-1}}Y_k.
	\]
	Therefore
	\begin{equation}\label{connect:RR}
		\left(
		{\mathcal R}_{S^{l-1}}
		-
		{\mathfrak R}_{S^{l-1}}
		\right)Y_k
		=
		\left(
		\frac1k-\frac1{\sqrt{k(k+a)}}
		\right)
		\nabla_{S^{l-1}}Y_k.
	\end{equation}
	In particular, when \(l=2\), we have \(a=0\), and the two transforms coincide.

	For \(a\ge0\) and \(0<r<1\), define
	\[
	w_a(r)
	=
	1-
	\frac1\pi
	\int_0^1
	\frac{r^{au}}{\sqrt{u(1-u)}}\,du.
	\]
	We shall use the identity
	\begin{equation}\label{eq:beta-identity}
		\frac1\pi
		\int_0^1
		\frac{du}{(k+au)\sqrt{u(1-u)}}
		=
		\frac1{\sqrt{k(k+a)}}.
	\end{equation}
	Indeed, after the change of variables
	$
	u=\frac{1-\cos\vartheta}{2},
	$
	the left-hand side of \eqref{eq:beta-identity} becomes
	\[
	\frac1\pi
	\int_0^\pi
	\frac{d\vartheta}
	{k+\frac a2-\frac a2\cos\vartheta}
	=
	\frac1{\sqrt{k(k+a)}}.
	\]
	Consequently,
	\[
	\begin{split}
		\int_0^1 r^{k-1}w_a(r)\,dr
		&=
		\frac1k
		-
		\frac1\pi
		\int_0^1
		\frac1{\sqrt{u(1-u)}}
		\left(
		\int_0^1r^{k+au-1}\,dr
		\right)du
		\\
		&=
		\frac1k-\frac1{\sqrt{k(k+a)}}.
	\end{split}
	\]

	Write
$
	C_rg(\theta)
	=
	C^{S^{l-1}}g(r\theta),
\,0<r<1.
$
	Since
$
	C_rY_k=r^kY_k,
$
	we have
	\[
	\nabla_{S^{l-1}}C_rY_k
	=
	r^k\nabla_{S^{l-1}}Y_k.
	\]
	Hence \eqref{connect:RR} gives
	\begin{equation}\label{eq:spectral-riesz-difference}
		\left(
		{\mathcal R}_{S^{l-1}}
		-
		{\mathfrak R}_{S^{l-1}}
		\right)g
		=
		\int_0^1
		w_a(r)\nabla_{S^{l-1}}C_rg\,\frac{dr}{r}
	\end{equation}
	for finite spherical harmonic sums \(g\) of mean zero, and therefore also
	for smooth mean-zero \(g\).

	We next show that the operator on the right-hand side of
	\eqref{eq:spectral-riesz-difference} is bounded on \(L^1(S^{l-1})\).
	First,
	\[
	0\le w_a(r)\le1,
	\qquad 0<r<1.
	\]
	Moreover, for \(1/2\le r<1\),
	\[
	w_a(r)
	=
	\frac1\pi
	\int_0^1
	\frac{1-r^{au}}{\sqrt{u(1-u)}}\,du
	\le
	C_a(1-r).
	\]
	Direct differentiation of the ball Poisson kernel gives
	\[
	\sup_{\eta\in S^{l-1}}
	\int_{S^{l-1}}
	\left|
	\nabla_\theta p(r\theta,\eta)
	\right|
	\,d\theta
	\le
	\begin{cases}
		C_l r,
		&0<r\le1/2,\\[3pt]
		C_l(1-r)^{-1},
		&1/2<r<1.
	\end{cases}
	\]
	Therefore, if
	\[
	D_lg
	=
	\int_0^1
	w_a(r)\nabla_{S^{l-1}}C_rg\,\frac{dr}{r},
	\]
	then, by Fubini's theorem,
	\[
	\begin{split}
		\|D_lg\|_1
		&\le
		C_l\|g\|_1
		\left[
		\int_0^{1/2}
		w_a(r)\frac{r}{r}\,dr
		+
		\int_{1/2}^1
		w_a(r)\frac{1}{r(1-r)}\,dr
		\right]\le
		C_l\|g\|_1.
	\end{split}
	\]
	Thus \(D_l\) extends to a bounded operator on \(L^1(S^{l-1})\).

	We now pass from smooth functions to general
	\(g\in L_0^1(S^{l-1})\). For \(0<\rho<1\), the function \(C_\rho g\) is
	smooth and has mean zero, so
	\[
	\left(
	{\mathcal R}_{S^{l-1}}
	-
	{\mathfrak R}_{S^{l-1}}
	\right)C_\rho g
	=
	D_l(C_\rho g).
	\]
	Since
	\[
	C_\rho g\longrightarrow g
	\qquad\text{in }L^1(S^{l-1})
	\]
	as \(\rho\uparrow1\), and \(D_l\) is bounded on \(L^1\),
	\[
	D_l(C_\rho g)\longrightarrow D_lg
	\qquad\text{in }L^1(S^{l-1}).
	\]
	Passing to the limit in the sense of distributions gives
	\begin{equation}\label{eq:Riesz-difference-L1}
		{\mathcal R}_{S^{l-1}}g
		-
		{\mathfrak R}_{S^{l-1}}g
		=
		D_lg.
	\end{equation}

	It follows from \eqref{eq:Riesz-difference-L1} that
	\[
	\|{\mathcal R}_{S^{l-1}}g\|_1
	\le
	\|{\mathfrak R}_{S^{l-1}}g\|_1
	+
	C_l\|g\|_1,
	\]
	and conversely,
	\[
	\|{\mathfrak R}_{S^{l-1}}g\|_1
	\le
	\|{\mathcal R}_{S^{l-1}}g\|_1
	+
	C_l\|g\|_1.
	\]
	After adjoining the identity operator, we obtain
	\[
	\|{\mathcal R}_{S^{l-1}}'(g)\|_1
	\cong
	\|{\mathfrak R}_{S^{l-1}}'(g)\|_1.
	\]
	Together with \cite[Theorem 2]{RW}, this proves
	\eqref{eq:tworiesz}.

	Finally, \({\mathcal R}_{S^{l-1}}g(\theta)\) is tangent to
	\(S^{l-1}\) at \(\theta\). Hence
	\[
	\theta\cdot
	{\mathcal R}_{S^{l-1}}g(\theta)
	=
	0,
	\]
	and therefore
	\[
	\begin{split}
		|\Xi(g)(\theta)|^2
		&=
		\left|
		g(\theta)\theta
		+
		{\mathcal R}_{S^{l-1}}g(\theta)
		\right|^2
		\\
		&=
		|g(\theta)|^2
		+
		|{\mathcal R}_{S^{l-1}}g(\theta)|^2.
	\end{split}
	\]
	Thus, with the finite-dimensional norm convention used in this paper,
	\[
	\|\Xi(g)\|_1
	\cong
	\|{\mathcal R}_{S^{l-1}}'(g)\|_1.
	\]
	This completes the proof.
	\endpf



For the approximation argument in Proposition \ref{prop:mulriszarea},
we record that $\mathcal R_{S^{l-1}}$ is of weak type $(1,1)$.
Indeed, its kernel satisfies, with $d=|\theta-\eta|$,
\[
|H(\theta,\eta)|\le C_l d^{1-l},\qquad
|\nabla_\theta H(\theta,\eta)|+|\nabla_\eta H(\theta,\eta)|
\le C_l d^{-l}.
\]
The spherical harmonic calculation above gives
$\|\mathcal R_{S^{l-1}}g\|_2\le\sqrt{l-1}\,\|g\|_2$,
and the transform annihilates constants. The weak $(1,1)$ estimate
therefore follows from the Calder\'on--Zygmund theorem on the sphere;
see \cite{KV1}.

				Next, we establish the equivalence between the \(L^1\)-norms of the two
				radial maximal functions.

		\begin{Proposition}\label{prop:tworadial}
			Let
			\[
			K_t^C(\theta,\eta)
			=
			c_l\frac{1-r^2}{|\eta-r\theta|^l},
			\qquad t=1-r,\quad 0<t\le1.
			\]
			Then \(\{K_t^C\}_{0<t\le1}\) is an approximation of the identity of exponent
			\(1\) on \(S^{l-1}\) in the sense of Martini--Meda--Vallarino. Consequently, for every $f\in L^1(S^{l-1})$,
			\[
			\|C_{+}^{S^{l-1}}(f)\|_{1}
			\cong
			\|P_{S^{l-1}}^{+}(f)\|_1.
			\]
				\end{Proposition}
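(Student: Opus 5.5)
We will verify that the ball Poisson kernels satisfy the defining axioms of an approximation of the identity of exponent $1$ on the space of homogeneous type $(S^{l-1},d,d\theta)$, with $d(\theta,\eta)=|\theta-\eta|$. We will then invoke the Martini--Meda--Vallarino comparison \cite{2022MMV}. That result asserts that the radial maximal functions built from any two such approximations of the identity have equivalent $L^1$ norms, both being equivalent to the atomic norm $\|\cdot\|_{H_a^1}$, with constants included. The same will be done for the spectral Poisson semigroup $P_t^{S^{l-1}}$, so that both radial maximal functions land in the same class. The equivalence $\|C_+^{S^{l-1}}f\|_1\cong\|P^+_{S^{l-1}}f\|_1$ then follows, for every $f\in L^1(S^{l-1})$.

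\textbf{The ball Poisson kernel.} Put $t=1-r$ and $d=|\theta-\eta|$. The identity $|\eta-r\theta|^2=(1-r)^2+rd^2$ gives $|\eta-r\theta|\cong t+d$ for $r\ge1/2$. Hence
\[
0\le K^C_t(\theta,\eta)\le C\,\frac{t}{(t+d)^{l}}\cong \frac{1}{|B(\theta,t+d)|}\,\frac{t}{t+d},
\]
which is the size condition with exponent $1$. Next we will differentiate in $\theta$ and in $\eta$ on $S^{l-1}$. Each derivative costs a factor $(t+d)^{-1}$, so the mean value theorem gives the regularity estimate
\[
|K^C_t(\theta,\eta)-K^C_t(\theta_1,\eta)|\le C\frac{d(\theta,\theta_1)}{t+d}\cdot\frac{t}{(t+d)^l}\qquad\text{whenever } d(\theta,\theta_1)\le (t+d)/2,
\]
and symmetrically in $\eta$. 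The normalisation $\int_{S^{l-1}}K^C_t(\theta,\eta)\,d\eta=1$ (and the same in $\theta$, by symmetry of the kernel) holds because constants are harmonic. For $1/2<t\le1$, that is $r<1/2$, the kernel is smooth and bounded, and this range is absorbed with $t\cong1\cong\operatorname{diam}$. Thus the axioms hold for $0<t\le1$.

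\textbf{The spectral Poisson semigroup.} For $P_t^{S^{l-1}}$, we will use the standard Li--Yau type bounds on a compact manifold with nonnegative Ricci curvature. Subordination to the heat kernel yields, for $0<t\le1$, the size bound $p_t(\theta,\eta)\lesssim t/(t+d)^{l}$ (the exponent is $l$ since $\dim S^{l-1}=l-1$). Gradient bounds for the heat kernel give the corresponding Lipschitz estimate with gain $d(\theta,\theta_1)/(t+d)$. Stochastic completeness gives integral one. For $t>1$, we write $P_tf=\bar f+e^{-t\sqrt{\cdot}}(f-\bar f)$, where $\bar f$ is the mean. The spectral gap and smoothing give
\[
\sup_{t>1}|P_tf|\le|\bar f|+C\|f\|_1,
\]
and the analogous bound holds for $r<1/2$ in $C_+$. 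These large-parameter parts are therefore comparable to $\|f\|_1$. We then observe that $\|f\|_1\le\|C_+f\|_1$ and $\|f\|_1\le\|P^+f\|_1$, by a.e. convergence of both approximations of the identity.

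\textbf{Main obstacle.} The main difficulty is the precise matching with the hypotheses in \cite{2022MMV}, namely uniformity of the constants, the range $t\in(0,1]$ versus $t\in(0,\infty)$, and symmetric regularity in both variables. For the semigroup, the hardest point is the Lipschitz estimate of $p_t$ uniformly as $t\to0$. We would first try to derive it from the explicit Gegenbauer/zonal expansion, or from the Li--Yau gradient estimate combined with subordination. If the semigroup is not directly covered by \cite{2022MMV}, a fallback is available: combine Proposition~\ref{proposition3} ($\|P^+f\|_1\cong\|f\|_{H^1_a}$, on mean-zero data, plus constants) with the MMV theorem applied to $K^C_t$ alone ($\|C_+f\|_1\cong\|f\|_{H_a^1}$).
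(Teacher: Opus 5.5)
This is essentially the paper's argument: you verify the Martini--Meda--Vallarino axioms for $K_t^C$ from $|\eta-r\theta|^2=t^2+r|\eta-\theta|^2$, place $\{P_t^{S^{l-1}}\}_{0<t\le1}$ in the same class, compare the two local maximal functions, absorb $\|f\|_1$ by a.e.\ convergence, and bound the $t>1$ part by $\|f\|_1$. You omit one axiom, the diagonal lower bound in MMV's definition, which is immediate from $K_t^C(\theta,\theta)=c_l(1+r)t^{-(l-1)}\gtrsim t^{-(l-1)}$ (the local--global splitting is trivial on the compact sphere); and the paper skips your Li--Yau/subordination step by quoting \cite[Proposition 5.3]{2022MMV} with $\alpha=1/2$ for the Poisson semigroup, then handles $t>1$ via $P_t=P_1P_{t-1}$.
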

Clearly, \[ \int_{S^{l-1}}K_t^C(\theta,\eta)\,d\eta=1. \]

				{\bf Proof of Proposition \ref{prop:tworadial}:}
				Write $\rho=\rho_{S^{l-1}}$ and $d=l-1$. We verify the
approximation-of-the-identity conditions in
\cite[Definitions 4.1--4.2]{2022MMV} with exponent $1$:
size, diagonal lower bound, regularity, and an integrable global remainder.
For regularity we use the sufficient gradient bound in
\cite[Remark 4.4]{2022MMV},
\[
|\nabla_yK(t,x,y)|\le Ct^{-d-1}
\left(1+\frac{\rho(x,y)}{t}\right)^{-d-2}.
\]

				First observe that
				\begin{eqnarray}
					|\eta-r\theta|^2
					=
					(1-r)^2+r|\eta-\theta|^2
					=
					t^2+r|\eta-\theta|^2.
				\end{eqnarray}
				Since \(S^{l-1}\) is compact and \(|\eta-\theta|\sim\rho(\eta,\theta)\)
				uniformly on \(S^{l-1}\), it follows that
				\begin{eqnarray}\label{es:basic}
					|\eta-r\theta|
					\sim
					t+\rho(\theta,\eta),
					\qquad 0<t\le1.
				\end{eqnarray}
				Indeed, for \(0<t\le1/2\) this follows directly from \(r\ge1/2\), while
				for \(1/2<t\le1\) both sides are uniformly comparable to \(1\). Since
				\(1-r^2\le3t\), we get
				\begin{eqnarray}\label{es:ktsize}
					K_t^C(\theta,\eta)
					\lesssim
					\frac{t}{(t+\rho(\theta,\eta))^l}
					\lesssim
					t^{-(l-1)}
					\left(1+\frac{\rho(\theta,\eta)}{t}\right)^{-(l-1)-1}.
				\end{eqnarray}
				This is the required size estimate with exponent \(\gamma=1\).

				On the diagonal,
				\begin{eqnarray}\label{es:kerneldiag}
					K_t^C(\theta,\theta)
					=
					c_l(1+r)(1-r)^{-l+1}
					=
					c_l(1+r)t^{-l+1}
					\gtrsim
					t^{-(l-1)}.
				\end{eqnarray}
				This gives the diagonal lower bound.

				Now we prove the regularity estimate. Set
			$
				Q(\theta,\eta)=|\eta-r\theta|^2.
	$
				Then
				\[
				K_t^C(\theta,\eta)
				=
				c_l(1-r^2)Q(\theta,\eta)^{-l/2}.
				\]
				If \(Y\) is a unit tangent vector field in the \(\eta\)-variable, then
				\[
				YQ=2(\eta-r\theta)\cdot Y\eta,
				\]
				and therefore
				\begin{eqnarray}
					|YQ|\lesssim|\eta-r\theta|=Q^{1/2}.
				\end{eqnarray}
				It follows that
				\[
				|YK_t^C(\theta,\eta)|
				\lesssim
				(1-r^2)Q^{-(l+1)/2}.
				\]
				Using \(1-r^2\le3t\) and \eqref{es:basic}, we obtain
				\begin{eqnarray}\label{es:gradient}
					|\nabla_{\eta}K_t^C(\theta,\eta)|
					\lesssim
					\frac{t}{(t+\rho(\theta,\eta))^{l+1}}
					\lesssim
					t^{-(l-1)-1}
					\left(1+\frac{\rho(\theta,\eta)}{t}\right)^{-(l-1)-2}.
				\end{eqnarray}
				This gives the required regularity.

				It remains to fit the kernel into the local-global decomposition required
				in Definition 4.2 of \cite{2022MMV}. Choose \(\epsilon>0\) smaller than half the
				injectivity radius of \(S^{l-1}\), and let
				\(\chi\in C^\infty([0,\pi])\) satisfy
				\[
				0\le\chi\le1,\qquad
				\chi(s)=1\quad(0\le s\le\epsilon),
				\qquad
				\chi(s)=0\quad(s\ge2\epsilon).
				\]
				Set
				\[
				K_t^{(1)}(\theta,\eta)
				=
				\chi(\rho(\theta,\eta))K_t^C(\theta,\eta),
				\]
				and
				\[
				K_t^{(2)}(\theta,\eta)
				=
				(1-\chi(\rho(\theta,\eta)))K_t^C(\theta,\eta).
				\]
				The local part \(K_t^{(1)}\) is supported where
				\(\rho(\theta,\eta)\le2\epsilon\). Estimates \eqref{es:ktsize},
				\eqref{es:kerneldiag}, and \eqref{es:gradient} show that \(K_t^{(1)}\)
				satisfies the size, diagonal lower bound, and regularity estimates.
				The derivative falling on the cutoff is bounded by
				\[
				K_t^C(\theta,\eta)
				|\nabla_{\eta}(\chi(\rho(\theta,\eta)))|
				\lesssim_{\epsilon}
				\frac{t}{(t+\rho(\theta,\eta))^{l}}
				\lesssim_{\epsilon}
				\frac{t}{(t+\rho(\theta,\eta))^{l+1}},
				\]
				because \(\nabla\chi(\rho(\theta,\eta))\) is supported where
				\(\epsilon\le\rho(\theta,\eta)\le2\epsilon\). Hence \(K_t^{(1)}\) is a
				local approximation of the identity of exponent \(1\) on \(S^{l-1}\).

				For the global part, on the support of \(K_t^{(2)}\) one has
				\(\rho(\theta,\eta)\ge\epsilon\). Therefore, by \eqref{es:ktsize},
				\[
				|K_t^{(2)}(\theta,\eta)|
				\lesssim_{\epsilon}
				1
				\]
				uniformly for \(0<t\le1\). Hence
				\[
				\operatorname*{ess\,sup}_{\eta\in S^{l-1}}
				\int_{S^{l-1}}
				\sup_{0<t\le1}|K_t^{(2)}(\theta,\eta)|\,d\theta
				<\infty.
				\]
				This is precisely the uniform integrability condition for the global part.
				Thus
				\[
				K_t^C=K_t^{(1)}+K_t^{(2)}
				\]
				is an approximation of the identity of exponent \(1\) on \(S^{l-1}\).

By \cite[Proposition 5.3]{2022MMV}, with $\alpha=1/2$, the Poisson
semigroup $\{P_t^{S^{l-1}}\}_{0<t\le1}$ is also an approximation of the
identity of exponent $1$. Thus
\cite[Theorem 4.15 and Corollary 5.4]{2022MMV} give
			\begin{eqnarray}\label{eq:localradialcompare}
				\|f\|_1+\|C_{+}^{S^{l-1}}(f)\|_1
				\cong
				\|f\|_1+
				\left\|
				\sup_{0<t\le1}|P_t^{S^{l-1}}f|
				\right\|_1 .
			\end{eqnarray}
				Since both \(K_t^C\) and \(P_t^{S^{l-1}}\) are normalized approximations to
				the identity, their maximal functions dominate \(|f|\) a.e. Therefore the
				\(L^1\)-terms can be absorbed, and
				\[
				\|C_{+}^{S^{l-1}}(f)\|_1
				\cong
				\left\|
				\sup_{0<t\le1}|P_t^{S^{l-1}}(f)|
				\right\|_1.
				\]

				Finally, we pass from the local radial maximal function to the full radial
			maximal function
			\[
			P_{S^{l-1}}^{+}(f)
			=
			\sup_{t>0}|P_t^{S^{l-1}}(f)|.
			\]
			For \(t>1\),
		$
			P_t^{S^{l-1}}
			=
			P_1^{S^{l-1}}P_{t-1}^{S^{l-1}}.
		$
			The operator \(P_1^{S^{l-1}}\) has a smooth bounded kernel on the compact
			sphere, and \(P_{t-1}^{S^{l-1}}\) is \(L^1\)-contractive. Hence
			\[
			\sup_{t>1}|P_t^{S^{l-1}}(f)(x)|
			\lesssim
			\|f\|_1.
			\]
			Thus
			\[
			\left\|
			\sup_{t>1}|P_t^{S^{l-1}}f|
			\right\|_1
			\lesssim
			\|f\|_1
			\lesssim
			\left\|
			\sup_{0<t\le1}|P_t^{S^{l-1}}f|
			\right\|_1.
			\]
			Together with \eqref{eq:localradialcompare}, this proves
			\[
			\|C_{+}^{S^{l-1}}(f)\|_1
			\cong
			\|P_{S^{l-1}}^{+}(f)\|_1.
			\]
			\endpf

	\section{Proof of Theorem \ref{thm3}}\label{sec:product}

	In this section the product Hardy space \(H^1(N\times M)\) is scalar-valued.
	Hilbert-valued estimates will only be used through one-parameter results after
	one of the two variables has been fixed.

	We now turn to the proof of Theorem \ref{thm3}. We first record some product
	estimates in a general geometric setting, which will later be applied to
	\[
	N=S^{n-1},
	\qquad
	M=S^{m-1}.
	\]
	Let \(N\) and \(M\) be complete Riemannian manifolds with non-negative Ricci
	curvature. For the auxiliary manifold estimates below, assume also
\begin{equation}\label{eq:unit-ball-volume-lower-bounds}
\inf_{x\in N}|B_N(x,1)|>0,
\qquad
\inf_{y\in M}|B_M(y,1)|>0.
\end{equation}
These bounds hold for compact factors, in particular for the spheres
to which the estimates will be applied.
	Let \(P_t^N\) and \(P_s^M\) be the Poisson semigroups on \(N\) and
	\(M\), respectively. For \(x\in N\) and \(y\in M\), set
	\[
	\Gamma_x^N
	=
	\{(u,t)\in N\times\R_+:\rho_N(x,u)<t\},
	\qquad
	\Gamma_y^M
	=
	\{(v,s)\in M\times\R_+:\rho_M(y,v)<s\}.
	\]
	For \(f\in L^1(N\times M)\), define
	\begin{eqnarray}\label{def:prona}
		\begin{aligned}
			P^+(f)(x,y)
			&=
			\sup_{t,s>0}
			\left|
			(P_t^N\otimes P_s^M)(f)(x,y)
			\right|,
			\\
			P^*(f)(x,y)
			&=
			\sup_{\substack{(u,t)\in\Gamma_x^N\\ (v,s)\in\Gamma_y^M}}
			\left|
			(P_t^N\otimes P_s^M)(f)(u,v)
			\right|,
			\\
			A(f)(x,y)
			&=
			\left(
			\Liint_{\Gamma_x^N\times\Gamma_y^M}
			\left|
			\left(
			\nabla_N^\perp\otimes\nabla_M^\perp
			\right)
			(P_t^N\otimes P_s^M)(f)(u,v)
			\right|^2
			\frac{t\,dt\,d_Nu}{|B_N(x,t)|}
			\frac{s\,ds\,d_Mv}{|B_M(y,s)|}
			\right)^{1/2}.
		\end{aligned}
	\end{eqnarray}
	Here
	\[
	\nabla_N^\perp=(\nabla_N,\partial_t),
	\qquad
	\nabla_M^\perp=(\nabla_M,\partial_s),
	\]
	and the norm in the definition of \(A(f)\) is the natural Hilbert norm in the
	finite-dimensional tensor product fiber. Recall that the product Hardy norm is
	defined by
	\[
	\|f\|_{H^1(N\times M)}
	=
	\|P^+(f)\|_{L^1(N\times M)}.
	\]

	The following estimate is a direct consequence of the product area-maximal
	estimate and the sub-mean inequality.

	\begin{Proposition}\label{proposition5}
		Let \(N\) and \(M\) be complete Riemannian manifolds with
		non-negative Ricci curvature, and suppose that
		\[
		\inf_{x\in N}|B_N(x,1)|>0,
		\qquad
		\inf_{y\in M}|B_M(y,1)|>0.
		\]
		Then, for every \(f\in H^1(N\times M)\),
		\[
		\|A(f)\|_{L^1(N\times M)}
		\le
		C\|f\|_{H^1(N\times M)}.
		\]
	\end{Proposition}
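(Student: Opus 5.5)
The plan is to pass from $P^+$ to the non-tangential maximal function $P^*$, and from $P^*$ to the area integral $A$. The second step will use the Hilbert-valued one-parameter area--maximal estimates of Chen--Luo, applied in each variable in turn, together with sub-mean value inequalities in the spirit of the Merryfield lemma.

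\emph{Step 1: from $P^+$ to $P^*$.} We first show $\|P^*(f)\|_{L^1(N\times M)}\lesssim\|P^+(f)\|_{L^1(N\times M)}$. For fixed $y$ and $s$, the function $F_s(\cdot,y)=(Id\otimes P_s^M)f(\cdot,y)$ has radial maximal function $\sup_t|P_t^N F_s|$ dominated by $P^+(f)(\cdot,y)$. Taking the supremum in $s$ only after the estimate is the subtle point. We use that $(x,t)\mapsto P_t^N\otimes P_s^M f$ is harmonic in $(x,t)$ for $\partial_t^2+\Delta_N$, and similarly in $(y,s)$. The sub-mean value inequality for $|u|^q$, $q<1$, valid for harmonic functions on manifolds with non-negative Ricci curvature (Li--Yau type estimates), then gives
\[
P^*(f)(x,y)^q\lesssim \mathcal M_N\mathcal M_M\big(P^+(f)^q\big)(x,y).
\]
Here $\mathcal M_N$ and $\mathcal M_M$ are the Hardy--Littlewood maximal operators, which are bounded on $L^{1/q}$ by volume doubling. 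Applying the strong maximal function $\mathcal M_N\mathcal M_M$ on $L^{1/q}$ yields the claim. The unit-ball volume lower bounds \eqref{eq:unit-ball-volume-lower-bounds} control the large-$t$ part of the cones, where the sub-mean ball of radius comparable to $t$ may exceed the local geometry.

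\emph{Step 2: from $P^*$ to $A$.} We show $\|A(f)\|_1\lesssim\|P^*(f)\|_1$ by iteration. Fix $(v,s)$ and regard $g=(Id\otimes\nabla_M^\perp P_s^M)f$ as a function on $N$ with values in a finite-dimensional space. The one-parameter area--maximal inequality $\|A_N g\|_{L^1(N;\mathcal H)}\lesssim\|P_N^*g\|_{L^1(N;\mathcal H)}$, in its Hilbert-valued form obtained by the randomization argument used for \eqref{eq:hilbert-area-riesz}, is applied with $\mathcal H=L^2(\Gamma_y^M,\,s\,ds\,d_Mv/|B_M(y,s)|)$. This bounds $\|A(f)\|_1$ by the $L^1$ norm of
\[
\Big(\int_{\Gamma^M_y}\sup_{(u,t)\in\Gamma_x^N}\big|\nabla_M^\perp P_s^M P_t^N f(u,\cdot)\big|^2\Big)^{1/2}.
\]
This mixed quantity is then handled by a vector-valued maximal argument. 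We interchange the role of the variables and apply the one-parameter area--maximal estimate in $M$ to the $\ell^\infty$-valued (in fact $L^\infty(\Gamma_x^N)$-valued) family, together with the scalar $P^*$ bound.

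\emph{Main obstacle.} The interchange in the last step is the hard part, since the Chen--Luo estimate is not directly available for $\ell^\infty$-valued data. The first attempt will be the Merryfield-type argument: replace the supremum over the $N$-cone by a slightly wider cone and write a Poisson average in $N$. Then $\nabla_M^\perp P_s^M P_t^N f(u,v)$ is dominated by a superposition of cone averages of $\nabla_M^\perp P_s^M P_{t/2}^N f$. Minkowski's inequality reduces the estimate to the scalar one-parameter area bound in $M$ with aperture changes, and these are harmless by the standard change-of-aperture lemma on doubling spaces. If this fails, the fallback is the tent-space atomic decomposition in the $N$ variable, which requires only the Hilbert-valued one-parameter results already cited. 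Combining Steps 1 and 2 gives $\|A(f)\|_1\lesssim\|P^*f\|_1\lesssim\|f\|_{H^1(N\times M)}$.
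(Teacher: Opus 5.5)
Your Step 1 is the paper's argument for $\|P^*(f)\|_1\lesssim\|P^+(f)\|_1$: local $L^q$ sub-mean estimates applied successively in the two cylinders, then $M_NM_M$ on $L^{1/q}$. Non-negative Ricci curvature already gives the mean-value inequality at all scales, so the unit-ball volume bounds play no role there.

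The gap is in Step 2, the product estimate $\|A(f)\|_1\lesssim\|P^*(f)\|_1$, which is the real content of the proposition.

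First, the Hilbert-valued bound $\|A_Ng\|_{L^1(N;\mathcal H)}\lesssim\|P_N^*g\|_{L^1(N)}$ does not follow from the Rademacher argument behind \eqref{eq:hilbert-area-riesz}. That argument handles linear operators. For the maximal function, Khintchine only gives $\mathbb E\,P_N^*\bigl(\sum_j\varepsilon_jg_j\bigr)(x)\gtrsim\sup_{(u,t)\in\Gamma_x^N}\|P_t^Ng(u)\|_{\mathcal H}$, which is the wrong direction. The Hilbert-valued inequality therefore needs its own proof, for example a Green's-formula or good-$\lambda$ argument.

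Second, and decisively, consider the expression left after that step. Whether the supremum over $\Gamma_x^N$ sits inside $\int_{\Gamma_y^M}$ (as you wrote, which only enlarges it) or outside, bounding it by $\|P^*(f)\|_1$ is an area--maximal inequality in $M$ for $L^\infty(\Gamma_x^N)$-valued data. One-parameter iteration cannot supply this, because maximal functions do not iterate in $L^1$.

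Your Merryfield-type repair does not close this as described. Writing $P_t^N=P_{t/2}^NP_{t/2}^N$ produces a Poisson average of $\nabla_M^\perp P_s^MP_{t/2}^Nf$ whose integrand still depends on $t$, so Minkowski's inequality cannot remove the supremum. If you replace it by $\sup_\tau$ and a Hardy--Littlewood maximal function in $N$, then $M_N$ acts on squares inside $\int_{\Gamma_y^M}$ and sits under an $L^1$-norm in $x$. That is the $L^{1/2}$ level, where $M_N$ is unbounded. Moreover, the new integrand still has a supremum inside the $M$-area integral. The tent-space fallback is only named, not carried out.

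What is missing is a genuinely two-parameter area--maximal theorem of Gundy--Stein or Merryfield type. The paper does not reprove one. It applies \cite[Theorem 4, p.~275]{CWn1} to the multiharmonic function $U(u,t;v,s)=(P_t^N\otimes P_s^M)f(u,v)$ with $d=2$, $p=1$, $\bar a=\bar b=(1,1)$. The hypotheses $\inf_x|B_N(x,1)|>0$ and $\inf_y|B_M(y,1)|>0$ are used exactly to verify that theorem's volume condition $\inf_{(x,y)}V_{(x,y)}((1,1))>0$. That citation, together with your Step 1, proves the proposition.
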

	
	{\bf Proof:}
	The product area--maximal estimate follows directly from
	\cite[Theorem 4, p.~275]{CWn1}. Indeed, the function
	\[
	U(u,t;v,s)
	=
	(P_t^N\otimes P_s^M)f(u,v)
	\]
	is multiharmonic in the sense of that theorem. Moreover, in the
	notation of \cite{CWn1},
	\[
	V_{(x,y)}((1,1))
	=
	|B_N(x,1)|\,|B_M(y,1)|,
	\]
	and hence
	\[
	\inf_{(x,y)\in N\times M}V_{(x,y)}((1,1))
	=
	\left(\inf_{x\in N}|B_N(x,1)|\right)
	\left(\inf_{y\in M}|B_M(y,1)|\right)>0.
	\]
	Thus the volume hypothesis of that theorem is satisfied. Taking
	\(d=2\), \(p=1\), and \(\bar a=\bar b=(1,1)\), we obtain
	\[
	\|A(f)\|_{L^1(N\times M)}
	\le
	C\|P^*(f)\|_{L^1(N\times M)}.
	\]
	
	It remains to compare the non-tangential maximal function \(P^*(f)\)
	with the radial maximal function \(P^+(f)\). Fix \(0<q<1\).
	The local \(L^q\)-to-\(L^\infty\) estimate for harmonic functions,
	applied successively in the two cylinders, gives
	\[
	|U(u,t;v,s)|^q
	\le
	\frac{C_q}
	{|B_N(x,ct)|\,|B_M(y,cs)|}
	\int_{B_N(x,ct)}
	\int_{B_M(y,cs)}
	(P^+(f)(z,w))^q\,d_Mw\,d_Nz
	\]
	whenever
	\[
	(u,t)\in\Gamma_x^N,
	\qquad
	(v,s)\in\Gamma_y^M,
	\]
	where \(c>1\) is fixed. Here the averages in the vertical variables
	arising from the local estimate are absorbed by the supremum in
	\(P^+(f)\). Taking the supremum over the two cones yields
	\[
	P^*(f)(x,y)
	\le
	C_q
	\left[
	M_NM_M\big((P^+(f))^q\big)(x,y)
	\right]^{1/q}.
	\]
	Since non-negative Ricci curvature implies the doubling property,
	\(M_N\) and \(M_M\) are bounded on \(L^{1/q}\). Consequently,
	\[
	\begin{aligned}
		\|P^*(f)\|_{L^1(N\times M)}
		&\le
		C_q
		\left\|
		M_NM_M\big((P^+(f))^q\big)
		\right\|_{L^{1/q}(N\times M)}^{1/q}
		\\
		&\le
		C
		\left\|(P^+(f))^q\right\|_{L^{1/q}(N\times M)}^{1/q}
		=
		C\|P^+(f)\|_{L^1(N\times M)}.
	\end{aligned}
	\]
	Finally,
	\[
	\|P^+(f)\|_{L^1(N\times M)}
	=
	\|f\|_{H^1(N\times M)},
	\]
	and the result follows.
	\endpf

	From this point on, set
	\[
	N=S^{n-1},
	\qquad
	M=S^{m-1}.
	\]
	We next record the corresponding Riesz transform characterization on the
	product sphere.
	For a manifold \(D\), write
	\[
	{\mathfrak R}_{D}'
	=
	({\mathfrak R}_{D},Id),
	\]
	where \(Id\) denotes the identity operator. Thus
	\[
	({\mathfrak R}_{N}'\otimes{\mathfrak R}_{M}')(f)
	=
	\big(
	f,\,
	{\mathfrak R}_{N}f,\,
	{\mathfrak R}_{M}f,\,
	({\mathfrak R}_{N}\otimes{\mathfrak R}_{M})f
	\big),
	\]
	with the natural finite-dimensional norm.

	\begin{Proposition}\label{proposition13}
		Let \(f\in L^1(N\times M)\) satisfy the separate cancellation
		conditions
		\[
		\int_N f(x,y)\,d_Nx=0
		\quad\text{for a.e. }y\in M,
		\qquad
		\int_M f(x,y)\,d_My=0
		\quad\text{for a.e. }x\in N.
		\]
		Then
		\begin{equation}\label{eq:product-riesz-general}
			\begin{aligned}
				\|f\|_{H^1(N\times M)}
				&=
				\|P^+(f)\|_{L^1(N\times M)}
				\cong
				\|P^*(f)\|_{L^1(N\times M)}
				\\
				&\cong
				\|A(f)\|_{L^1(N\times M)}
				\cong
				\left\|
				(\mathfrak R_N'\otimes\mathfrak R_M')f
				\right\|_{L^1(N\times M)}.
			\end{aligned}
		\end{equation}
	\end{Proposition}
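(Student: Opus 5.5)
We prove Proposition \ref{proposition13} as a cycle of inequalities:
\[
\|P^+f\|_1\le\|P^*f\|_1\lesssim\|P^+f\|_1,\qquad
\|Af\|_1\lesssim\|P^+f\|_1,
\]
\[
\|(\mathfrak R_N'\otimes\mathfrak R_M')f\|_1\lesssim\|Af\|_1,\qquad
\|P^+f\|_1\lesssim\|(\mathfrak R_N'\otimes\mathfrak R_M')f\|_1 .
\]
The first chain is contained in the proof of Proposition \ref{proposition5}. The trivial bound $P^+\le P^*$ and the sub-mean value argument give $\|P^*f\|_1\cong\|P^+f\|_1$. The product area--maximal theorem of \cite{CWn1} then gives $\|Af\|_1\lesssim\|P^*f\|_1$. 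So only the Riesz transform comparisons require work, and these will be obtained by iterating the one-parameter Hilbert-valued equivalences \eqref{eq:hilbert-area-riesz}, one variable at a time.

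\emph{Step 1: $\|(\mathfrak R_N'\otimes\mathfrak R_M')f\|_1\lesssim\|Af\|_1$.} Fix $y$ and regard $g_y(x)=f(x,y)$. For fixed $x$, the functions $(v,s)\mapsto\nabla_M^\perp P_s^M f(x,\cdot)(v)$, restricted to the cone $\Gamma_y^M$ with measure $s\,ds\,d_Mv/|B_M(y,s)|$, define an element of a Hilbert space $\mathcal H_y$. Call this element $G(x)$; it is mean zero in $x$ by the first cancellation condition. Then
\[
A(f)(x,y)=\Big(\int_{\Gamma_x^N}\|\nabla_N^\perp P_t^N G(u)\|_{\mathcal H_y}^2\,\frac{t\,dt\,d_Nu}{|B_N(x,t)|}\Big)^{1/2}=A_N(G)(x).
\]
Applying \eqref{eq:hilbert-area-riesz} in $N$ gives
\[
\int_N A(f)(x,y)\,dx\cong\int_N\|\mathfrak R_N'G(x)\|_{\mathcal H_y}\,dx .
\]
Since $\mathfrak R_N$ commutes with the $M$-operations, $\|\mathfrak R_N'G(x)\|_{\mathcal H_y}=A_M\big((\mathfrak R_N'f)(x,\cdot)\big)(y)$. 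Integrate in $y$ and exchange the order of integration. Applying the one-parameter equivalence in $M$ to the $\mathbb C^{n+1}$-valued function $(\mathfrak R_N'f)(x,\cdot)$, which is mean zero in $y$ by the second cancellation condition, yields
\[
\|Af\|_1\cong\|(\mathfrak R_N'\otimes\mathfrak R_M')f\|_1 .
\]
Both directions come out at once. The delicate points are measurability and the Fubini exchange, since the cone in $M$ depends on $y$. I would first fix $y$ in the outer integral and apply the $N$-result to the $\mathcal H_y$-valued function, which is legitimate pointwise in $y$.

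\emph{Step 2: $\|P^+f\|_1\lesssim\|(\mathfrak R_N'\otimes\mathfrak R_M')f\|_1$.} I would argue in the same iterated fashion, using the vector-valued extension of \cite[Theorem 3]{CL}, namely $\|P_D^+g\|_1\lesssim\|\mathfrak R_D'g\|_1$ for mean-zero $\mathcal H$-valued $g$. The target space is $\mathcal H=L^\infty_s$, which is not Hilbert, so the sup over $s$ cannot be handled as a Hilbert-valued quantity. To avoid this, I would instead use Step 1 and pass through $A$: we have $\|P^+f\|_1\le\|P^*f\|_1$, and one needs the converse product estimate $\|P^*f\|_1\lesssim\|Af\|_1$. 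This can be obtained by iteration: $P^*f(x,y)\le\sup_{(u,t)\in\Gamma_x^N}\|P_t^N F(u)\|_{B}$, with $F(u)=\big(P_s^M f(u,\cdot)(v)\big)_{(v,s)\in\Gamma_y^M}$. This leads to a non-tangential maximal function of an $\ell^\infty$-valued function, which again is not Hilbert.

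Hence the main obstacle is this non-Hilbertian lower bound. The first thing I would try is to bypass it with a product atomic or Riesz route. Write $f=-(\mathfrak R_N\cdot\mathfrak R_N)(\mathfrak R_M\cdot\mathfrak R_M)f$ up to constants, which is valid under separate cancellation. Then bound $P^+f\le\sum_{i,j}P^+\big(\mathfrak R_{N,i}\mathfrak R_{M,j}h_{ij}\big)$ with $h=(\mathfrak R_N\otimes\mathfrak R_M)f\in L^1$. Finally, exploit the fact that each component $(P_t^N\otimes P_s^M)\mathfrak R_{N,i}\mathfrak R_{M,j}$ is a conjugate-harmonic pairing, so that the iterated one-parameter estimate \cite[Theorem 3]{CL}, applied in each variable to scalar data with the other variable frozen, controls the bi-parameter supremum. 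Failing this, I would fall back on the maximal-function characterization of \cite{CWn1}, $\|P^*f\|_1\lesssim\|Af\|_1$, which is stated there for multiharmonic functions, and check its hypotheses as in Proposition \ref{proposition5}.
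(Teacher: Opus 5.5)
Your easy directions and the area--Riesz step agree with the paper. The bounds $\|P^+f\|_1\cong\|P^*f\|_1$ and $\|A(f)\|_1\lesssim\|P^*f\|_1$ come from Proposition \ref{proposition5}. The equivalence $\|A(f)\|_1\cong\|G\|_1$, with $G=(\mathfrak R_N'\otimes\mathfrak R_M')f$, is obtained there exactly as you do: apply \eqref{eq:hilbert-area-riesz} first in $N$, with values in the $M$-cone space, and then in $M$, using each cancellation condition once.

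The genuine gap is the remaining direction $\|P^+f\|_1\lesssim\|G\|_1$. Without it the cycle does not close, and it is the real content of the proposition. You only list candidate routes, and none works as stated:
\begin{itemize}
\item Applying the $L^1$ \emph{statement} of \cite[Theorem 3]{CL} with the other variable frozen bounds $\sup_t|P_t^NP_s^Mf(\cdot,y)|$ in $L^1(N)$ for each fixed $s$. The supremum over $s$ cannot then be moved inside the norm; this is the $\ell^\infty$ obstruction you noticed yourself.
\item Splitting $f$ through $h=(\mathfrak R_N\otimes\mathfrak R_M)f$ and using the triangle inequality over components discards exactly the information the estimate needs. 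A term $P^+(\mathfrak R_{N,i}\mathfrak R_{M,j}h_{ij})$ is not controlled by $\|h\|_1$ alone, since double Riesz transforms do not map $L^1$ into $H^1$; the other components of $G$ are required.
\item The fallback $\|P^*f\|_1\lesssim\|A(f)\|_1$ from \cite{CWn1} is not what the paper takes from that reference. Only the area-by-maximal direction is used. Invoking the reverse product inequality without checking it assumes what has to be proved here.
\end{itemize}

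The missing idea is a \emph{pointwise} subharmonic majorization, which makes the two-parameter supremum harmless.
\begin{itemize}
\item The paper forms the product conjugate system $\mathcal U=(U_{00},U_{10},U_{01},U_{11})$. Its components are $U_{00}=(P_t^N\otimes P_s^M)f$, $U_{10}=(\vec P_t^N\otimes P_s^M)\mathfrak R_Nf$, $U_{01}=(P_t^N\otimes\vec P_s^M)\mathfrak R_Mf$, and $U_{11}=(\vec P_t^N\otimes\vec P_s^M)(\mathfrak R_N\otimes\mathfrak R_M)f$.
\item Here $\vec P_\tau^D=e^{-\tau\sqrt{L_D^{(1)}}}$ is the Poisson semigroup of the Hodge Laplacian on one-forms. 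The identities $d_D\omega=0$, $\partial_\tau\omega=-d_Du$, $\partial_\tau u=-d_D^*\omega$ make $\mathcal U$ a conjugate system in each variable, with boundary value $G$.
\item Take $q>\max\{\frac{n-2}{n-1},\frac{m-2}{m-1}\}$. The Bochner formula and the refined Kato inequality make $|\mathcal U|^q$ subharmonic separately in the two cylinders.
\item Separate cancellation gives decay, and the factors are compact, so successive Poisson majorization yields $|\mathcal U(t,s)|^q\le(P_t^N\otimes P_s^M)(|G|^q)$ pointwise.
\item The right-hand side is a positive Poisson extension of one fixed function. Hence the non-tangential supremum over both cones is at most $C\,M_NM_M(|G|^q)$, and the $L^{1/q}$-boundedness of $M_NM_M$ gives $\|P^*f\|_1\lesssim\|G\|_1$.
\end{itemize}

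Your iteration idea can be rescued the same way, by using the mechanism of \cite[Theorem 3]{CL} rather than its statement. Freeze $(y,s)$ and set $g_s=P_s^Mf(\cdot,y)$. The one-parameter conjugate system $F_s$ of $g_s$ satisfies $|F_s(u,t)|^q\le P_t^N(|F_s(\cdot,0)|^q)(u)$. Since $\sup_s|F_s(x,0)|=P_M^+\big((\mathfrak R_N'f)(x,\cdot)\big)(y)$, the supremum over $s$ passes through this bound. The finite-dimensional vector-valued one-parameter result in $M$ then finishes, since $(\mathfrak R_N'f)(x,\cdot)$ has mean zero in $y$.

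In either form you also need the rotation-average regularization and Fatou's lemma to pass from smooth $f$ to general $f$.
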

	
	{\bf Proof:}
	We first assume that \(f\) is smooth. The product area--maximal
	estimate and the sub-mean argument in Proposition
	\ref{proposition5} give
	\[
	\|A(f)\|_1
	\lesssim
	\|P^*(f)\|_1
	\lesssim
	\|P^+(f)\|_1,
	\]
	while \(P^+(f)\le P^*(f)\) pointwise.
	
	Set
	\[
	G
	=
	(\mathfrak R_N'\otimes\mathfrak R_M')f.
	\]
	Let \(A_M\) denote the one-parameter area function acting in the
	\(M\)-variable. Applying the Hilbert-valued one-parameter
	area--Riesz characterization first in the \(N\)-variable and then in
	the \(M\)-variable gives
	\[
	\begin{aligned}
		\|A(f)\|_1
		&\cong
		\|A_M(f)\|_1
		+
		\|A_M(\mathfrak R_Nf)\|_1
		\\
		&\cong
		\|f\|_1
		+
		\|\mathfrak R_Nf\|_1
		+
		\|\mathfrak R_Mf\|_1
		+
		\|(\mathfrak R_N\otimes\mathfrak R_M)f\|_1
		\\
		&\cong
		\|G\|_1.
	\end{aligned}
	\]
	The separate cancellation conditions remove the constant modes in
	each application. They are preserved by transforms acting in the
	other variable because the corresponding operators commute.
	
	It remains to estimate the maximal function by \(G\). For \(D=N,M\),
	let
	\[
	L_D^{(1)}
	=
	d_Dd_D^*+d_D^*d_D
	\]
	be the nonnegative Hodge Laplacian on one-forms, and write
	\[
	\vec P_\tau^D
	=
	e^{-\tau\sqrt{L_D^{(1)}}}.
	\] 
	Define
	\[
	\begin{aligned}
		U_{00}(t,s)
		&=
		(P_t^N\otimes P_s^M)f,
		\\
		U_{10}(t,s)
		&=
		(\vec P_t^N\otimes P_s^M)(\mathfrak R_Nf),
		\\
		U_{01}(t,s)
		&=
		(P_t^N\otimes\vec P_s^M)(\mathfrak R_Mf),
		\\
		U_{11}(t,s)
		&=
		(\vec P_t^N\otimes\vec P_s^M)
		\big((\mathfrak R_N\otimes\mathfrak R_M)f\big),
	\end{aligned}
	\]
	and put
	\[
	\mathcal U=(U_{00},U_{10},U_{01},U_{11}).
	\]
	For \(u=P_\tau^Dg\) and
	\(\omega=\vec P_\tau^D\mathfrak R_Dg\), functional calculus gives
	\[
	d_D\omega=0,
	\qquad
	\partial_\tau\omega=-d_Du,
	\qquad
	\partial_\tau u=-d_D^*\omega.
	\]
	Thus \(\mathcal U\) is a conjugate system in each variable.
	Its boundary value is \(G\), and its
	scalar--scalar component is
	\[
	U_{00}(t,s)
	=
	(P_t^N\otimes P_s^M)f.
	\]
	
	Choose
	\[
	\max\left\{
	\frac{n-2}{n-1},
	\frac{m-2}{m-1}
	\right\}
	<q<1.
	\]
	The conjugate Poisson identities, the Bochner formula, and the
	refined Kato inequality show that \(|\mathcal U|^q\) is subharmonic
	separately in the two cylinders; see also
	\cite[proof of Theorem 3, pp.~34--35]{CL} for the one-parameter argument.
	The separate cancellation conditions ensure that the corresponding
	Poisson extensions decay as either semigroup parameter tends to infinity.
	Since \(N\) and \(M\) are compact, the maximum
	principle can therefore be applied successively to give
	\[
	|\mathcal U(t,s)(x,y)|^q
	\le
	(P_t^N\otimes P_s^M)(|G|^q)(x,y).
	\]
	Taking the supremum over the product non-tangential regions and using
	the standard domination of the positive non-tangential Poisson
	maximal operators by the Hardy--Littlewood maximal operators, we get
	\[
	P^*(f)
	\le
	C\left[M_NM_M(|G|^q)\right]^{1/q}.
	\]
	Since \(1/q>1\),
	\[
	\begin{aligned}
		\|P^*(f)\|_1
		&\le
		C\left\|M_NM_M(|G|^q)\right\|_{L^{1/q}}^{1/q}
		\\
		&\le
		C\left\||G|^q\right\|_{L^{1/q}}^{1/q}
		=
		C\|G\|_1.
	\end{aligned}
	\]
	Combining the preceding estimates proves
	\eqref{eq:product-riesz-general} for smooth \(f\).
	
	The area--Riesz equivalence above is valid for general \(f\), since it
	follows directly from the Hilbert-valued one-parameter theorem. It
	remains only to extend the maximal estimate. If \(\|G\|_1<\infty\), let
	\(f_\varepsilon\) be smooth probability rotation averages. Rotation
	covariance, with the natural orthogonal actions on one-form and tensor
	components, gives \(G_\varepsilon\to G\) in \(L^1\), while the scalar
	Poisson extensions converge locally uniformly. The smooth estimate and
	Fatou's lemma therefore yield
	\[
	\|P^*(f)\|_1\le C\|G\|_1.
	\]
	If \(\|G\|_1=\infty\), this estimate is automatic. This completes the
	proof.
	\endpf

	Applying Proposition \ref{thm:tworiesz} successively in the two
	variables, componentwise for finite-dimensional vector-valued data,
	gives the following corollary.
\begin{Corollary}\label{cor:5.3}
		For every \(f\in L_0^1(S^{n-1}\times S^{m-1})\),
		\begin{eqnarray}\label{eq:product-sphere-riesz}
			\|f\|_{H^1}
			&\cong&
			\|P^*(f)\|_{1}
			\cong
			\|A(f)\|_{1}
			\nonumber\\
			&\cong&
			\left\|
		\left(	{\mathfrak R}_{S^{n-1}}'
			\otimes
			{\mathfrak R}_{S^{m-1}}'\right)(f)
			\right\|_{1}
			\nonumber\\
			&\cong&
			\left\|
		\left(	{\mathcal R}_{S^{n-1}}'
			\otimes
			{\mathcal R}_{S^{m-1}}'\right)(f)
			\right\|_{1}
			\nonumber\\
			&\cong&
			\left\|
			\left(\widetilde R_{S^{n-1}}'
			\otimes
			\widetilde R_{S^{m-1}}'\right)(f)
			\right\|_{1}.
		\end{eqnarray}
	\end{Corollary}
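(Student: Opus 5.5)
The first three equivalences in \eqref{eq:product-sphere-riesz} are already Proposition \ref{proposition13} with \(N=S^{n-1}\), \(M=S^{m-1}\). Indeed, \(f\in L_0^1(S^{n-1}\times S^{m-1})\) means exactly that \(f\) satisfies the separate cancellation conditions. So it remains to show
\[
\|(\mathfrak R'_{S^{n-1}}\otimes\mathfrak R'_{S^{m-1}})f\|_1\cong\|(\mathcal R'_{S^{n-1}}\otimes\mathcal R'_{S^{m-1}})f\|_1\cong\|(\widetilde R'_{S^{n-1}}\otimes\widetilde R'_{S^{m-1}})f\|_1 .
\]
I will do this by applying Proposition \ref{thm:tworiesz} one variable at a time, using the intermediate mixed quantity \(\|(\mathcal R'_{S^{n-1}}\otimes\mathfrak R'_{S^{m-1}})f\|_1\).

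\emph{Step 1 (first variable).} Fix \(y'\) and write \(g=\mathfrak R'_{S^{m-1}}f(\cdot,y')\). This is a finite-dimensional vector of functions on \(S^{n-1}\): its components are \(f(\cdot,y')\) and the components of \(\mathfrak R_{S^{m-1}}f(\cdot,y')\). Because \(\mathfrak R_{S^{m-1}}\) acts only in the second variable, it commutes with integration over \(S^{n-1}\). Hence each component of \(g\) has mean zero on \(S^{n-1}\) for a.e. \(y'\). For smooth \(f\) this is immediate; for general \(f\) I will pass through the rotation averages used in the proof of Proposition \ref{proposition13}. Applying \eqref{eq:tworiesz} componentwise gives
\[
\|\mathcal R'_{S^{n-1}}g\|_{L^1(S^{n-1})}\cong\|\mathfrak R'_{S^{n-1}}g\|_{L^1(S^{n-1})}
\]
for each \(y'\), with constants independent of \(y'\). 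Integrating in \(y'\) then gives
\[
\|(\mathfrak R'\otimes\mathfrak R')f\|_1\cong\|(\mathcal R'\otimes\mathfrak R')f\|_1 .
\]
The two-sided comparison in the proof of Proposition \ref{thm:tworiesz} is linear, of the form \(\mathcal R g-\mathfrak R g=D_lg\) with \(D_l\) bounded on \(L^1\). So it applies to any \(L^1\) datum, and finiteness of one side forces finiteness of the other.

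\emph{Step 2 (second variable).} Interchange the roles of the variables. Fix \(x'\) and apply Proposition \ref{thm:tworiesz} on \(S^{m-1}\) to the components of \(\mathcal R'_{S^{n-1}}f(x',\cdot)\). These again have mean zero in \(y'\), since \(\mathcal R_{S^{n-1}}\) acts in \(x'\) only. This yields
\[
\|(\mathcal R'\otimes\mathfrak R')f\|_1\cong\|(\mathcal R'\otimes\mathcal R')f\|_1 .
\]

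\emph{Step 3 (restricted Riesz transforms).} Repeat Steps 1 and 2 with the equivalence \(\|\mathcal R'\cdot\|_1\cong\|\widetilde R'\cdot\|_1\) from \cite[Theorem 2]{RW}, which is included in \eqref{eq:tworiesz}. This passes from \(\mathcal R'\otimes\mathcal R'\) to \(\widetilde R'\otimes\mathcal R'\) and then to \(\widetilde R'\otimes\widetilde R'\).

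The main obstacle is justifying that the one-variable operators commute and are well defined on the rough, vector-valued slices. The cleanest point is the Ricci--Weiss step, since \(\widetilde R\) is defined only through \eqref{def:tildeR:dist} and is not an explicit \(L^1\)-bounded correction. I will first prove all the equivalences for smooth \(f\), where the operators in different variables obviously commute. I will then use rotation covariance of all three transforms and the weak \((1,1)\) bound for \(\mathcal R_{S^{l-1}}\) to identify the distributional limits, and apply Fatou's lemma to extend to general \(f\). This is the same approximation scheme as at the end of the proof of Proposition \ref{proposition13}.
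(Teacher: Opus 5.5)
Your proposal is correct and follows the paper's route. The paper obtains the corollary by combining Proposition \ref{proposition13}, which gives the first three equivalences, with Proposition \ref{thm:tworiesz} applied successively in the two variables, componentwise for vector-valued data, exactly as in your Steps 1--3. Your added points make explicit what the paper's one-line argument and its standing regularization remark leave implicit: the mean-zero property of the slices, the $L^1$-bounded difference operator $D_l$, and the smooth-approximation and Fatou argument.
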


	It remains to connect the product Hardy-space quantities above with the
	ball-Poisson quantities on the product sphere. For
	\(f\in L^1(S^{n-1}\times S^{m-1})\), write
	\[
	C(f)(u,v)
	=
	\left(
	C^{S^{n-1}}\otimes C^{S^{m-1}}
	\right)(f)(u,v),
	\qquad
	u\in B^n,\quad v\in B^m,
	\]
	for the product ball-Poisson extension.

	We define the radial maximal function, the non-tangential maximal function,
	and the product Lusin area integral by
	\begin{eqnarray}\label{def:procs}
		\begin{aligned}
			C_{+}(f)(x',y')
			&=
			\sup_{0<r,s<1}
			\left|
			C(f)(rx',sy')
			\right|,
			\\
			C_{*}(f)(x',y')
			&=
			\sup_{(u,v)\in\Theta_{x',y'}}
			\left|
			C(f)(u,v)
			\right|,
			\\
			S(f)(x',y')
			&=
			\left(
			\Liint_{\Theta_{x',y'}}
			\left|
			\left(
			\nabla_u\otimes\nabla_v
			\right)
			C(f)(u,v)
			\right|^2
			\frac{du}{(1-|u|)^{n-2}}
			\frac{dv}{(1-|v|)^{m-2}}
			\right)^{1/2}.
		\end{aligned}
	\end{eqnarray}
	Here
	\[
	\Theta_{x',y'}
	=
	\Theta_{x'}^{S^{n-1}}\times \Theta_{y'}^{S^{m-1}},
	\]
	and \(\nabla_u\otimes\nabla_v\) means that the Euclidean gradients are taken in
	both ball variables. The norm is the natural tensor norm.

	In view of Corollary \ref{cor:5.3}, it remains to compare these
	ball-Poisson quantities with the product Riesz-transform quantities. This will
	be done in the following two propositions. 
	\begin{Proposition}\label{prop:mulriszarea}
		For \(f\in L_0^1(S^{n-1}\times S^{m-1})\), one has
		\begin{eqnarray}\label{equiv:drs}
			\left\|
		\left(	{\mathcal R}_{S^{n-1}}'
			\otimes
			{\mathcal R}_{S^{m-1}}'\right)(f)
			\right\|_1
			\cong
			\|S(f)\|_1 .
		\end{eqnarray}
	\end{Proposition}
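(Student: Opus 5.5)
The plan is to iterate the one-parameter ball-Poisson area--Riesz equivalence, one variable at a time, in its Hilbert-valued form \eqref{eq:hilbert-area-riesz}. That is the same mechanism used for the spectral area function in Proposition \ref{proposition13}.

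\textbf{Fibrewise reduction.} Fix \(y'\in S^{m-1}\) and regard the \(v\)-part of the construction as a Hilbert-valued function of \(x'\). Concretely, for \(g\) on \(S^{m-1}\) let
\[
\mathcal S_{S^{m-1}}g
=
\left\{\nabla_vC^{S^{m-1}}g(v)\,(1-|v|)^{-(m-2)/2}\right\}_{v\in\Theta_{y'}^{S^{m-1}}},
\]
which takes values in \(\mathcal H_{y'}=L^2(\Theta_{y'}^{S^{m-1}};\mathbb C^m)\). Then
\[
S(f)(x',y')=S^{S^{n-1}}\big(\mathcal S_{S^{m-1}}f(\cdot,y')\big)(x'),
\]
since \(C\) factors as \(C^{S^{n-1}}\otimes C^{S^{m-1}}\) and the \(u\)-operations commute with the \(v\)-operations. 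By separate cancellation, \(x'\mapsto\mathcal S_{S^{m-1}}f(x',y')\) has mean zero on \(S^{n-1}\) for a.e.\ \(y'\). Applying \eqref{eq:hilbert-area-riesz} on \(S^{n-1}\) for fixed \(y'\) and integrating in \(y'\) gives
\[
\|S(f)\|_1\cong\big\|\,|\mathcal S_{S^{m-1}}(\mathcal R_{S^{n-1}}'f)|_{\mathcal H_{y'}}\big\|_1 .
\]
This uses that \(\mathcal R_{S^{n-1}}\) commutes with \(\mathcal S_{S^{m-1}}\) because they act in different variables.

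\textbf{Second variable.} Fix \(x'\). The function \(\mathcal R_{S^{n-1}}'f(x',\cdot)\) is \(\mathbb C^{n}\oplus\mathbb C\)-valued and has mean zero on \(S^{m-1}\): the cancellation in \(y'\) is preserved by \(\mathcal R_{S^{n-1}}\), which acts only in \(x'\). Applying the scalar equivalence \(\|S^{S^{m-1}}g\|_1\cong\|\mathcal R_{S^{m-1}}'g\|_1\) componentwise on \(S^{m-1}\) and integrating in \(x'\) yields \eqref{equiv:drs}.

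\textbf{Justification.} The main obstacle is making the formal interchanges rigorous for general \(f\in L_0^1\), where the mixed quantities may only be distributions. Two points need care.
\begin{enumerate}[(i)]
\item The Hilbert-valued equivalence must be applied to \(\mathcal H_{y'}\)-valued functions with \(\mathcal H_{y'}\) depending on \(y'\). This is harmless: the Rademacher/finite-rank argument is independent of the Hilbert space, and after rotating \(y'\) to a fixed pole all the spaces are isometric.
\item The identity \(\mathcal R_{S^{n-1}}\mathcal S_{S^{m-1}}=\mathcal S_{S^{m-1}}\mathcal R_{S^{n-1}}\) and the finiteness of the intermediate quantities must be checked.
\end{enumerate}
For (ii), I would first prove \eqref{equiv:drs} for smooth \(f\), where everything is absolutely convergent. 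I would then approximate by the smooth rotation averages \(f_\varepsilon\) used in Proposition \ref{proposition13}.

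\textbf{Limiting argument.}
\begin{itemize}
\item \emph{If the left side of \eqref{equiv:drs} is finite.} Rotation covariance gives \((\mathcal R'\otimes\mathcal R')f_\varepsilon\to(\mathcal R'\otimes\mathcal R')f\) in \(L^1\). The gradients of \(C(f_\varepsilon)\) converge locally uniformly on the open product of balls, so Fatou's lemma gives \(\|S(f)\|_1\lesssim\) LHS.
\item \emph{If \(S(f)\in L^1\).} Mean-zero data make \(S\) dominate \(C_*\) in each variable, so \(\|(\mathcal R'\otimes\mathcal R')f_\varepsilon\|_1\) is uniformly bounded. The weak type \((1,1)\) of \(\mathcal R_{S^{l-1}}\) recorded above identifies the distributional limit with \((\mathcal R'\otimes\mathcal R')f\). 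Lower semicontinuity of the \(L^1\) norm under a.e.\ (subsequence) convergence then gives the reverse inequality.
\end{itemize}
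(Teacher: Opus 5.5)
Your proposal is correct and is essentially the paper's own proof. For smooth separately mean-zero $f$ you apply the Hilbert-valued equivalence \eqref{eq:hilbert-area-riesz} in $x'$ with values in $\mathcal H_{y'}$, using the cancellation in $x'$ and the commutation of operators acting in different variables, then the finite-dimensional version in $y'$, and you pass to general $f$ by smooth rotation averages, weak type $(1,1)$, and Fatou. Two points in the limiting step need tidying. The uniform bound on $\|(\mathcal R'\otimes\mathcal R')f_\varepsilon\|_1$ comes from the smooth case together with $\|S(f_\varepsilon)\|_1\le\|S(f)\|_1$ (rotation invariance of $S$ and Minkowski), not from any domination of $C_*$. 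The mixed component must be identified in two stages, as the paper does: first place the single transforms in $L^1$, then get their $L^1$ convergence from rotation covariance, then apply weak type in the other variable. This is needed because a tensor product of weak type $(1,1)$ operators need not be of weak type $(1,1)$.
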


	\begin{Proposition}\label{prop:areamaximal}
		For \(f\in L_0^1(S^{n-1}\times S^{m-1})\), one has
		\begin{eqnarray}\label{equiv:areamaximal}
			\|C_+(f)\|_1
			\cong
			\|C_*(f)\|_1
			\cong
			\|S(f)\|_1 .
		\end{eqnarray}
	\end{Proposition}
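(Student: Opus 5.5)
The plan is to close a cycle of inequalities among $\|C_+(f)\|_1$, $\|C_*(f)\|_1$, $\|S(f)\|_1$ and the Riesz quantity $\|({\mathcal R}_{S^{n-1}}'\otimes{\mathcal R}_{S^{m-1}}')f\|_1$. Three links are available at the outset:
\begin{itemize}
\item $C_+(f)\le C_*(f)$ holds pointwise, since the radial segment lies in the cone;
\item $\|S(f)\|_1\lesssim\|C_*(f)\|_1$ is \cite[Theorem 1.2]{chineseequivalence};
\item $\|S(f)\|_1\cong\|({\mathcal R}_{S^{n-1}}'\otimes{\mathcal R}_{S^{m-1}}')f\|_1$ is Proposition \ref{prop:mulriszarea}.
\end{itemize}
It therefore suffices to prove two estimates:
\[
\textrm{(a)}\ \ \|C_*(f)\|_1\lesssim\|C_+(f)\|_1,
\qquad
\textrm{(b)}\ \ \|C_*(f)\|_1\lesssim\|({\mathcal R}_{S^{n-1}}'\otimes{\mathcal R}_{S^{m-1}}')f\|_1 .
\]
Both will be proved first for smooth $f$ with separate cancellation, and then extended by rotation averages and Fatou's lemma, exactly as in Proposition \ref{proposition13}.

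For (a), I would follow the sub-mean argument of Proposition \ref{proposition5}. The function $U(u,v)=C(f)(u,v)$ is harmonic separately in $u\in B^n$ and in $v\in B^m$. Fix $0<q<1$. Given $(u,v)\in\Theta_{x',y'}$, apply the local $L^q$--$L^\infty$ estimate for harmonic functions on a Whitney ball around $u$ of radius comparable to $1-|u|$, and then on such a ball around $v$. Each point of these balls lies on a radius $\rho w'$ with $w'$ in a spherical cap of radius comparable to $1-|u|$ (respectively $1-|v|$) about $x'$ (respectively $y'$). Hence the $|U|^q$ values are bounded by $(C_+(f))^q$ on those caps, which gives
\[
C_*(f)\le C_q\big[M_{S^{n-1}}M_{S^{m-1}}\big((C_+f)^q\big)\big]^{1/q}.
\]
The $L^{1/q}$ boundedness of the spherical Hardy--Littlewood maximal operators then yields (a). Near the origin of either ball, where the cones are fat, the caps have radius comparable to $1$ and the same bound holds trivially.

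For (b), I would use the ball conjugate system. In one variable, let $h$ solve the Neumann problem in $B^l$ with datum $f$; this is possible because $f$ has mean zero. Then $\nabla h$ is harmonic with boundary value $\Xi(f)$ from \eqref{eq:xif}, so $\nabla h=C^{S^{l-1}}(\Xi f)$. Moreover $r\partial_rh=u\cdot\nabla h$ is harmonic with boundary value $f$, hence
\[
C^{S^{l-1}}f(u)=u\cdot\nabla h(u)\quad\text{and}\quad |C^{S^{l-1}}f(u)|\le|\nabla h(u)|.
\]
In the product setting, let $H$ be the separate Neumann solution. Then $\mathcal V=(\nabla_u\otimes\nabla_v)H=C\big((\Xi\otimes\Xi)f\big)$ and $|C(f)(u,v)|\le|\mathcal V(u,v)|$. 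The field $\nabla_u H$ is a Stein--Weiss gradient system in $u$ for fixed $v$, and likewise in $v$. So $|\mathcal V|^q$ is subharmonic separately in each ball for $\max\{\frac{n-2}{n-1},\frac{m-2}{m-1}\}\le q<1$. Applying the Poisson representation successively in the two balls gives
\[
|\mathcal V(u,v)|^q\le C\big(|(\Xi\otimes\Xi)f|^q\big)(u,v).
\]
The nontangential maximal function of a positive ball-Poisson integral is dominated by $M_{S^{n-1}}M_{S^{m-1}}$, so
\[
\|C_*(f)\|_1\lesssim\|(\Xi\otimes\Xi)f\|_1\cong\|({\mathcal R}_{S^{n-1}}'\otimes{\mathcal R}_{S^{m-1}}')f\|_1 ,
\]
where the last equivalence is Proposition \ref{thm:tworiesz} applied componentwise in each variable.

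The main obstacle I expect is the justification in (b). One must check that the separate Neumann solution exists and that the mixed gradient has the claimed boundary values; separate cancellation is exactly what removes the constant modes. One must also check that the iterated subharmonic domination is legitimate up to the boundary. I would prove both for finite sums of products of spherical harmonics, where every identity is explicit, and pass to general smooth $f$ by $L^1$ continuity of $\Xi\otimes\Xi$ on smooth data together with local uniform convergence of the extensions. For general $f$ with $\|({\mathcal R}'\otimes{\mathcal R}')f\|_1<\infty$, I would approximate by rotation averages and apply Fatou's lemma.
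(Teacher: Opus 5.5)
Your proposal is correct and follows essentially the same route as the paper. It passes from radial to non-tangential control via the interior $L^q$ sub-mean estimate and $M_nM_m$, cites \cite[Theorem 1.2]{chineseequivalence} and Proposition \ref{prop:mulriszarea}, and bounds $\|C_*(f)\|_1$ by $\|(\mathcal R_{S^{n-1}}'\otimes\mathcal R_{S^{m-1}}')f\|_1$ through the product conjugate system $(\nabla_u\otimes\nabla_v)H$, the Euler identity, refined-Kato subharmonicity of $|F|^q$, and successive Poisson majorization. Your separate Neumann solution $H$ is exactly the paper's potential $v(x,y)=\int_0^1\int_0^1 C(f)(\rho x,\sigma y)\,\frac{d\rho}{\rho}\frac{d\sigma}{\sigma}$, and your exponent range $q\ge\max\{\frac{n-2}{n-1},\frac{m-2}{m-1}\}$ is also valid; the paper simply takes the more restrictive choice $q>\max\{\frac{n-1}{n},\frac{m-1}{m}\}$.
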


	\begin{Remark}
		Related results for product upper half-spaces can be found in
		\cite{Chenshuo,Sato}. In the present setting, we prove the required estimate
		directly by using the product conjugate harmonic system associated with the
		ball Poisson kernels.
	\end{Remark}

	In the two proofs below, whenever a reduction to smooth data is needed,
	we use smooth probability rotation averages \(f_\varepsilon\) on the
	two spheres. These averages preserve separate cancellation and converge
	to \(f\) in \(L^1\). Rotation covariance and Minkowski's inequality give
	contraction of the relevant area, maximal, and Riesz quantities, while
	the Poisson extensions and their relevant derivatives converge locally
	uniformly. When a transformed component is not known a priori to belong
	to \(L^1\), the one-parameter weak \((1,1)\) estimates and Fatou's lemma
	first place the single-transform limits in \(L^1\). Rotation covariance
	then gives their \(L^1\)-convergence, after which the same weak-type
	argument identifies the mixed transform. A final application of Fatou's
	lemma passes the estimates to \(f\). We use this regularization without
	further comment.

	{\bf Proof of Proposition \ref{prop:mulriszarea}:}
	We first assume that \(f\) is smooth and satisfies the separate
	cancellation conditions. The proof is obtained by applying the
	Hilbert-valued one-parameter equivalence \eqref{eq:hilbert-area-riesz}
	successively in the two spherical variables.

	For convenience, if \(g=g(x',y')\) is scalar- or finite-dimensional
	vector-valued, write
	\[
	S_m(g)(x',y')
	=
	\left(
	\int_{\Theta_{y'}^{S^{m-1}}}
	\left|
	\nabla_v C^{S^{m-1}}(g(x',\cdot))(v)
	\right|^2
	\frac{dv}{(1-|v|)^{m-2}}
	\right)^{1/2}.
	\]
	Thus \(S_m\) is simply the one-parameter area integral in the second
	variable.

	We first apply \eqref{eq:hilbert-area-riesz} in the first variable.
	Fix \(y'\in S^{m-1}\), and regard
	\[
	x'
	\longmapsto
	\left[
	v\longmapsto
	\nabla_v C^{S^{m-1}}(f(x',\cdot))(v)
	\right]
	\]
	as a function on \(S^{n-1}\) with values in the Hilbert space
	\[
	L^2\left(
	\Theta_{y'}^{S^{m-1}},
	\frac{dv}{(1-|v|)^{m-2}};
	\mathbb C^m
	\right).
	\]
	The cancellation of \(f\) in the first variable gives
	\[
	\int_{S^{n-1}}
	\nabla_v C^{S^{m-1}}(f(x',\cdot))(v)\,dx'
	=
	\nabla_v C^{S^{m-1}}
	\left(
	\int_{S^{n-1}}f(x',\cdot)\,dx'
	\right)(v)
	=
	0.
	\]
	Hence the Hilbert-valued one-parameter result applies.

	The constants are independent of \(y'\). Integrating in \(y'\) and
	commuting the operators in different variables, we obtain
	\begin{equation}\label{eq:first-iteration-area}
	\|S(f)\|_1
	\cong
	\|S_m(f)\|_1
	+
	\|S_m({\mathcal R}_{S^{n-1}}f)\|_1,
	\end{equation}
	where \({\mathcal R}_{S^{n-1}}\) acts only in the first variable.

	We now apply \eqref{eq:hilbert-area-riesz} in the second variable.
	For \(f(x',\cdot)\), this gives
	\[
	\|S_m(f)\|_1
	\cong
	\|f\|_1
	+
	\|
	(Id\otimes{\mathcal R}_{S^{m-1}})f
	\|_1.
	\]
	The same argument applies to
	\(({\mathcal R}_{S^{n-1}}\otimes Id)f\). Indeed, the cancellation of
	\(f\) in the second variable implies
	\[
	\int_{S^{m-1}}
	({\mathcal R}_{S^{n-1}}f)(x',y')\,dy'
	=
	{\mathcal R}_{S^{n-1}}
	\left(
	\int_{S^{m-1}}f(\cdot,y')\,dy'
	\right)(x')
	=
	0.
	\]
	Therefore,
	\[
	\begin{aligned}
		\|S_m({\mathcal R}_{S^{n-1}}f)\|_1
		\cong\,
		&
		\|
		({\mathcal R}_{S^{n-1}}\otimes Id)f
		\|_1
+
		\|
		({\mathcal R}_{S^{n-1}}
		\otimes
		{\mathcal R}_{S^{m-1}})f
		\|_1.
	\end{aligned}
	\]
	Combining the preceding estimates, we obtain
	\begin{equation}\label{eq:product-area-four-components}
		\begin{aligned}
			\|S(f)\|_1
			\cong\,
			&
			\|f\|_1
			+
			\|
			(Id\otimes{\mathcal R}_{S^{m-1}})f
			\|_1
			\\
			&+
			\|
			({\mathcal R}_{S^{n-1}}\otimes Id)f
			\|_1
			+
			\|
			({\mathcal R}_{S^{n-1}}
			\otimes
			{\mathcal R}_{S^{m-1}})f
			\|_1\\
			&\cong 	\left\|
			\left(
			{\mathcal R}_{S^{n-1}}'
			\otimes
			{\mathcal R}_{S^{m-1}}'
			\right)f
			\right\|_1.
		\end{aligned}
	\end{equation}
	This proves \eqref{equiv:drs} for smooth \(f\).
	
	The regularization described above extends
	\eqref{eq:product-area-four-components} to general \(f\), and hence
	proves \eqref{equiv:drs}.
	\endpf

{\bf Proof of Proposition \ref{prop:areamaximal}:}
Let \(M_n\) and \(M_m\) denote the Hardy--Littlewood maximal
operators on the two spheres. Choose
\[
\max\left\{\frac{n-1}{n},\frac{m-1}{m}\right\}<q<1.
\]

We first compare the radial and non-tangential maximal functions.
For a harmonic function \(u\) in \(B^d\), the interior
\(L^q\)-to-\(L^\infty\) estimate gives
\[
|u(z)|^q
\le
\frac{C_q}{\delta^d}
\int_{B(z,c\delta)}|u(w)|^q\,dw,
\qquad
\delta=1-|z|,
\]
where \(0<c<1\) is fixed and sufficiently small. If
\(z\in\Theta_\xi\), then the directions of the points in
\(B(z,c\delta)\setminus\{0\}\) lie in a spherical cap centered at
\(\xi\) of radius at most \(C\delta\), and each corresponding radial
interval has length at most \(C\delta\). Hence, with
\[
H_u(\theta)=\sup_{0<r<1}|u(r\theta)|^q,
\]
polar coordinates give
\[
|u(z)|^q
\le
C_q\delta^{1-d}
\int_{B_{S^{d-1}}(\xi,C\delta)}
H_u(\theta)\,d\theta
\le
C_qMH_u(\xi).
\]
Taking the supremum over \(z\in\Theta_\xi\), we obtain
\[
\sup_{z\in\Theta_\xi}|u(z)|^q
\le
C_q
M\left(
\sup_{0<r<1}|u(r\,\cdot)|^q
\right)(\xi).
\]
Applying this estimate successively to \(C(f)(x,y)\) in the two
variables yields
\[
C_*(f)(x',y')
\le
C_q
\left[
M_nM_m\bigl(C_+(f)^q\bigr)(x',y')
\right]^{1/q}.
\]
Since \(1/q>1\),
$
\|C_*(f)\|_1
\lesssim
\|C_+(f)\|_1.
$
The reverse inequality is immediate, and therefore
\begin{equation}\label{equiv:radialnontan}
	\|C_*(f)\|_1\cong\|C_+(f)\|_1.
\end{equation}

The product area--maximal estimate
\cite[Theorem 1.2]{chineseequivalence}, applied with \(p=1\) and
\(\alpha=\beta=\pi/6\), gives
\begin{equation}\label{eq:area_by_max}
	\|S(f)\|_1\lesssim\|C_*(f)\|_1.
\end{equation}
It remains to prove
\begin{equation}\label{eq:maxnonriesz}
	\|C_*(f)\|_1
	\lesssim
	\|(\mathcal R_{S^{n-1}}'
	\otimes\mathcal R_{S^{m-1}}')f\|_1.
\end{equation}

Assume first that \(f\) is smooth and has separate mean zero. Define
\[
v(x,y)
=
\int_0^1\int_0^1
(C^{S^{n-1}}\otimes C^{S^{m-1}})f(\rho x,\sigma y)
\,\frac{d\rho}{\rho}\frac{d\sigma}{\sigma},
\qquad
F=\nabla_x\otimes\nabla_yv.
\]
Separate cancellation gives
\[
(C^{S^{n-1}}\otimes C^{S^{m-1}})f(0,y)
=
(C^{S^{n-1}}\otimes C^{S^{m-1}})f(x,0)
=
0,
\]
so \(v\) is smooth in \(B^n\times B^m\), including at the origins.

Using separate cancellation, we may subtract the values of the
Poisson kernels at the origins before differentiating. We obtain
\begin{equation}\label{FF}
	F(x,y)
	=
	\int_{S^{n-1}\times S^{m-1}}
	f(\omega',z')
	\mathcal F_n(x,\omega')
	\otimes
	\mathcal F_m(y,z')
	\,d\omega'\,dz',
\end{equation}
where, for \(d\ge2\), \(\zeta\ne0\), and \(\eta\in S^{d-1}\),
\[
\begin{aligned}
	\mathcal F_d(\zeta,\eta)
	=c_d\Bigg[
	&\left(
	\frac{1-|\zeta|^2}{|\eta-\zeta|^d}-1
	\right)
	\frac{\zeta'}{|\zeta|}
	\\
	&+
	d\int_0^1
	\frac{1-r^2|\zeta|^2}{|\eta-r\zeta|^{d+2}}
	\bigl(\eta-(\eta\cdot\zeta')\zeta'\bigr)\,dr
	\Bigg].
\end{aligned}
\]
The integral in \eqref{FF} extends smoothly across \(x=0\) and \(y=0\).

Since \(v\) is harmonic separately, every column of \(F\) is a
conjugate harmonic system in \(x\), and every row is one in \(y\):
\begin{equation}\label{eq:product-conjugate-system}
	\begin{gathered}
		\partial_{x_i}F_{jk}
		=
		\partial_{x_j}F_{ik},
		\qquad
		\sum_{j=1}^n\partial_{x_j}F_{jk}=0,
		\\
		\partial_{y_a}F_{jb}
		=
		\partial_{y_b}F_{ja},
		\qquad
		\sum_{k=1}^m\partial_{y_k}F_{jk}=0.
	\end{gathered}
\end{equation}
The Euler identities and separate cancellation give
\[
\begin{aligned}
	\langle x\otimes y,F(x,y)\rangle
	&=
	(x\cdot\nabla_x)(y\cdot\nabla_y)v(x,y)
=
	(C^{S^{n-1}}\otimes C^{S^{m-1}})f(x,y).
\end{aligned}
\]
Consequently,
\begin{equation}\label{eq:scalar-by-F}
	\left|
	(C^{S^{n-1}}\otimes C^{S^{m-1}})f(x,y)
	\right|
	\le |F(x,y)|.
\end{equation}

The boundary value of \(F\) is
\[
\begin{aligned}
	\Xi(f)(x',y')
	={}&
	f(x',y')x'\otimes y'
	+
	(\mathcal R_{S^{n-1}}f)(x',y')\otimes y'
	\\
	&+
	x'\otimes(\mathcal R_{S^{m-1}}f)(x',y')
	+
	(\mathcal R_{S^{n-1}}\otimes\mathcal R_{S^{m-1}})
	f(x',y').
\end{aligned}
\]
Thus
\begin{equation}\label{poisson}
	F
	=
	(C^{S^{n-1}}\otimes C^{S^{m-1}})\Xi(f).
\end{equation}

For complex-valued \(f\), include the real and imaginary parts as
separate components. The systems
\eqref{eq:product-conjugate-system} imply the refined Kato
inequalities
\[
|\nabla_x|F||^2
\le
\frac{n-1}{n}|\nabla_xF|^2,
\qquad
|\nabla_y|F||^2
\le
\frac{m-1}{m}|\nabla_yF|^2.
\]
Indeed, the \(x\)-derivative of each column of \(F\) is a symmetric
trace-free matrix \(A\), for which
\[
|A\xi|^2
\le
\frac{n-1}{n}|A|^2|\xi|^2;
\]
summing over the columns gives the first inequality, and the second
is analogous. Therefore,
\[
\begin{aligned}
	\Delta_x|F|^q
	&=
	q|F|^{q-2}
	\left(
	|\nabla_xF|^2+(q-2)|\nabla_x|F||^2
	\right)
	\\
	&\ge
	q|F|^{q-2}
	\left(
	1-(2-q)\frac{n-1}{n}
	\right)
	|\nabla_xF|^2
	\ge0,
\end{aligned}
\]
and similarly \(\Delta_y|F|^q\ge0\). At zeros of \(F\), this follows
by applying the preceding calculation to
\((|F|^2+\varepsilon^2)^{q/2}\) and letting
\(\varepsilon\downarrow0\).

Successive Poisson majorization now gives
\[
|F(x,y)|^q
\le
(C^{S^{n-1}}\otimes C^{S^{m-1}})
\bigl(|\Xi(f)|^q\bigr)(x,y).
\]
Set
\[
F^*(x',y')
=
\sup_{(x,y)\in\Theta_{x',y'}}|F(x,y)|.
\]
By \eqref{eq:scalar-by-F},
\[
C_*(f)(x',y')\le F^*(x',y').
\]
The non-tangential maximal function of a positive product Poisson
extension is bounded by \(CM_nM_m\). Hence
\[
F^*(x',y')
\le
C
\left[
M_nM_m\bigl(|\Xi(f)|^q\bigr)(x',y')
\right]^{1/q},
\]
and consequently
\begin{equation}\label{es:friesz}
	\|C_*(f)\|_1
	\le
	\|F^*\|_1
	\le
	C\|\Xi(f)\|_1.
\end{equation}

The four terms in \(\Xi(f)\) belong respectively to the mutually
orthogonal radial--radial, tangential--radial, radial--tangential,
and tangential--tangential components. In particular,
\[
\begin{aligned}
	|\Xi(f)|^2
	={}&
	|f|^2
	+
	|\mathcal R_{S^{n-1}}f|^2
	+
	|\mathcal R_{S^{m-1}}f|^2
+
	|(\mathcal R_{S^{n-1}}
	\otimes\mathcal R_{S^{m-1}})f|^2.
\end{aligned}
\]
Therefore
\[
\|\Xi(f)\|_1
\cong
\|(\mathcal R_{S^{n-1}}'
\otimes\mathcal R_{S^{m-1}}')f\|_1,
\]
which proves \eqref{eq:maxnonriesz} for smooth \(f\).

The same regularization extends \eqref{eq:maxnonriesz} to general
\(f\).

Finally, combine \eqref{eq:maxnonriesz},
Proposition \ref{prop:mulriszarea}, and \eqref{eq:area_by_max}, and
then use \eqref{equiv:radialnontan}, to obtain
\[
\|C_+(f)\|_1
\cong
\|C_*(f)\|_1
\cong
\|S(f)\|_1.
\]
\endpf

Together with Corollary \ref{cor:5.3}, Propositions
\ref{prop:mulriszarea} and \ref{prop:areamaximal} complete the proof of
Theorem \ref{thm3}.

\section*{Acknowledgments}
J.~Chen, D.~Fan, and M.~Wang were partially supported by the National
Key R\&D Program of China under Grant No.~2022YFA1005703. Additional
support was provided by the National Natural Science Foundation of China
under Grant No.~12571109 (J.~Chen and D.~Fan), Grant No.~12371105
(D.~Fan), and Grant No.~12371100 (M.~Wang).

\end{document}